\theoremstyle{definition}
\newcommand{\rbb}{\mathbb{R}}
\newcommand{\C}{\mathcal{C}}
\renewcommand{\L}{\mathcal{L}}
\newcommand{\W}{\mathcal{W}}
\newcommand{\B}{\mathcal{B}}
\newcommand{\la}{\langle}
\newcommand{\ra}{\rangle}
\newcommand{\xt}{\tilde{x}}
\newcommand{\vt}{\tilde{v}}
\newcommand{\vrt}{\tilde{\varrho}}
\newcommand{\vr}{\varrho}
\newcommand{\Xt}{\tilde{X}}
\newcommand{\Yt}{\tilde{Y}}
\newcommand{\Zt}{\tilde{Z}}
\newcommand{\Xcal}{\mathcal{X}}
\newcommand{\grad}{\nabla}
\newcommand{\nt}{\notag}
\newcommand{\mi}{\wedge}
\renewcommand{\d}{\text{d}}
\newcommand{\f}{\varphi}
\newcommand{\E}{\mathbb{E}}
\renewcommand{\P}{\mathbb{P}}
\newcommand{\Pcal}{\mathcal{P}}
\newcommand{\close}{\!\!\!}
\newcommand{\ddt}{\tfrac{\d}{\d t}}
\newcommand{\dds}{\tfrac{\d}{\d s}}
\newcommand{\R}{ \mathcal{R}}
\theoremstyle{plain}
\newtheorem{theorem}{Theorem}[section]
\newtheorem{corollary}[theorem]{Corollary}
\newtheorem{lemma}[theorem]{Lemma}
\newtheorem{assumption}[theorem]{Assumption}
\newtheorem{proposition}[theorem]{Proposition}
\newtheorem{choice}[theorem]{Choice}
\theoremstyle{definition}
\newtheorem{definition}[theorem]{Definition}
\newtheorem{remark}[theorem]{Remark}
\numberwithin{equation}{section}
\crefname{algorithm}{Algorithm}{Algorithms}
\crefname{assumption}{Assumption}{Assumptions}
\crefname{lemma}{Lemma}{Lemmas}
\crefname{theorem}{Theorem}{Theorems}
\crefname{remark}{Remark}{Remarks}
\crefname{corollary}{Corollary}{Corollaries}
\crefname{figure}{Fig.}{Figures}
\crefname{section}{Section}{Sections}
\crefname{proposition}{Proposition}{Propositions}
\crefname{definition}{Definition}{Definitions}
\author{Hung D.~Nguyen$^1$ and Anand U. Oza$^2$}
\address{$^1$ Department of Mathematics, University of Tennessee, Knoxville, Tennessee, USA}
\address{$^2$ Department of Mathematical Sciences \& Center for Applied Mathematics and Statistics, New Jersey Institute of Technology, Newark, New Jersey, USA}
\begin{document}

\title{Exponential mixing in a hydrodynamic pilot--wave theory with singular potentials}

\begin{abstract}

We conduct an analysis of a stochastic hydrodynamic pilot-wave theory, which is a Langevin equation with a memory kernel that describes the dynamics of a walking droplet (or ``walker") subjected to a repulsive singular potential and random perturbations through additive Gaussian noise. Under suitable assumptions on the singularities, we show that the walker dynamics is exponentially attracted toward the unique invariant probability measure. The proof relies on a combination of the Lyapunov technique and an asymptotic coupling specifically tailored to our setting. We also present examples of invariant measures, as obtained from numerical simulations of the walker in two-dimensional Coulomb potentials. Our results extend previous work on the ergodicity of stochastic pilot-wave dynamics established for 
smooth confining potentials.
\end{abstract}

\maketitle

\section{Introduction}\label{Sec:Intro}

We are interested in the following stochastic integro-differential equation in $\rbb^d$:
\begin{align} \label{eqn:droplet:original}
\d x(t)&=v\,\mathrm{d}t,\\
m\,\d v(t)&=- v\,\mathrm{d}t-\grad U(x(t))\,\mathrm{d}t- \grad G(x(t))\,\d t+\int_{-\infty}^t\close H(x(t)-x(s))K(t-s)\,\mathrm{d}s\,\mathrm{d}t+\sigma\mathrm{d}W(t),\nt
\end{align}
which has been used to model the dynamics of a millimetric droplet (or ``walker") moving along the surface of a vibrating fluid bath of the same fluid~\cite{BushOzaROPP,Couder2005a,Oza2013}. In \eqref{eqn:droplet:original}, the pair $(x(t),v(t))\in \rbb^d\setminus\{0\}\times\rbb^d$ represents the displacement and velocity of the walker whose mass is denoted by the constant $m>0$, $U:\rbb^d\to [0,\infty)$ is a smooth potential satisfying certain polynomial growth, $G:\rbb^d\setminus\{0\}\to \rbb$ is a singular potential, $H:\rbb^d\to \rbb^d$ is the pilot-wave force with $K:[0,\infty)\to[0,\infty)$ the corresponding memory kernel, and $W(t)$ is a standard $d$-dimensional Brownian motion with noise strength constant $\sigma>0$. 

\subsection{Physical background} 

Over the past two decades, a number of experiments have demonstrated that a millimetric oil droplet bouncing on the surface of a vertically vibrating fluid bath exhibits certain phenomena previously thought to be exclusive to the microscopic quantum realm~\cite{Bush2024,BushOzaROPP}. These include quantized orbital radius~\cite{Blitstein2024,Fort2010,Harris2014a} and angular momentum \cite{Durey2017,Perrard2014}, wavelike statistics in cavities \cite{Gilet2016,Harris2013a,Saenz2018}, tunneling~\cite{Eddi2009b,Hubert2017,Nachbin2017,Tadrist2019}, Friedel oscillations~\cite{Saenz2019a} and Anderson localization~\cite{Abraham2024}, among others~\cite{Eddi2012,Frumkin2023a,Frumkin2023b,Frumkin2022,
Nachbin2022,Papatryfonos2022,Valani2018_2}. The droplet bounces on the surface of the bath and thus generates a guiding or `pilot' wave field. This wave field imparts a propulsive horizontal force to the droplet, which causes it to move (or ``walk") along the surface of the bath.
  
While the walker's dynamics may be chaotic, its statistics often displays wave-like patterns reminiscent of those exhibited by quantum particles. Specifically, experiments and numerical simulations have suggested that the walker's long-time statistical behavior exhibits a wave-like signature in a rotating frame~\cite{Harris2014a,Oza2014b}, harmonic potential~\cite{Durey2017,Kurianski2017,Perrard2014} and corral geometry~\cite{Cristea2018,Gilet2016,Harris2013a,Rahman2018,Saenz2018}. There has thus been an interest in understanding the long-time behavior of this hydrodynamic pilot-wave system. To that end, prior studies established a link between the walker's position probability density and the time-averaged pilot-wave field~\cite{Durey2018,Tambasco2018a}. 

 The discovery of this hydrodynamic pilot-wave system motivated the development of a number of theoretical models that describe the walker dynamics~\cite{BushOzaROPP,Turton2018}. The so-called `stroboscopic model' is the focus of this paper, and it was developed in Ref.~\cite{Oza2013} under the assumptions of no external force ($U=G\equiv 0$) and no stochastic forcing ($\sigma=0$). The model neglects consideration of the walker's vertical dynamics, and instead describes its horizontal position $x(t)\in\mathbb{R}^2$ and velocity $v(t)\in\mathbb{R}^2$ along the bath surface. Moreover, the timescale over which $x(t)$ evolves is assumed to be long relative to the vertical bouncing period. The walker is thus modeled as a moving source that continuously generates standing waves; these waves have a prescribed spatial profile $h(x)$, and decay in time due to the influence of viscosity according to the function $K(t)$~\cite{Molacek2013b}. The walker thus experiences a wave force proportional $\nabla h(x)K(t)=H(x)K(t)$, integrated over its entire past, resulting in the integral shown in Eq.~\eqref{eqn:droplet:original}. A key feature of the system is its ``path memory"~\cite{Fort2010}, as evidenced by the integral over the walker's past, with the near past having a larger influence than the far owing to the decay of $K$. The walker also experiences a drag force proportional to its velocity $v$.  
 
The stroboscopic model~\eqref{eqn:droplet:original} without stochastic forcing ($\sigma=0$) and without a singular potential ($G\equiv 0$) has been studied numerically for linear~\cite{Valani2022}, quadratic~\cite{Budanur2019,Labousse2016a,Labousse2014a,Perrard2018}, quartic~\cite{Montes2021} or Bessel~\cite{Tambasco2018a} potentials $U(x)$. However, the case of singular potentials has received comparatively little attention: to the authors' knowledge, the only such example is Ref.~\cite{Tambasco2016}, who used numerical simulations to demonstrate that, for a two-dimensional attractive Coulomb potential, $G(x) = \log(|x|)$, the dynamics undergoes a period-doubling transition to chaos as the pilot-wave force becomes progressively stronger. The Coulomb potential is of particular interest to physicists, as the description of the hydrogen atom as given by Schr\"{o}dinger's equation is a canonical problem in quantum mechanics textbooks~\cite[Chapter VII]{CohenQM}.

 We note that in the absence of the memory term, i.e., setting $K\equiv 0$, equation \eqref{eqn:droplet:original} is reduced to the classical Langevin dynamics
\begin{align}
\d x(t)&=v\,\mathrm{d}t,\nonumber \\
m\,\d v(t)&=- v\,\mathrm{d}t-\grad U(x(t))\,\mathrm{d}t- \grad G(x(t))\,\d t+\sigma\mathrm{d}W(t), \label{eqn:Langevin:original}
\end{align}
whose large-time behavior is well understood \cite{conrad2010construction, cooke2017geometric,grothaus2015hypocoercivity,
herzog2019ergodicity,lu2019geometric}. More specifically, under a broad class of polynomial potentials $U$ and singular potentials $G$, the system \eqref{eqn:Langevin:original} is exponentially attractive toward a unique invariant probability measure in  the phase space $\R^d$ where
\begin{align} \label{form:R^d}
\R^d = \begin{cases} (0,\infty)\times \rbb ,& d=1,\\
\rbb^d\setminus\{0\}\times\rbb^d,& d\ge 2.
\end{cases}
\end{align}
Here, the difference of the phase space $\R^d$ between dimension $d=1$ and $d\ge 2$ is due to the fact that $\rbb\setminus\{0\}$ is not a connected domain. Notably, the ergodicity results established for \eqref{eqn:Langevin:original} in~\cite{conrad2010construction, cooke2017geometric,grothaus2015hypocoercivity,
herzog2019ergodicity,lu2019geometric} cover a wide range of singularities, including the Lennard-Jones function
\begin{align} \label{form:Lennard-Jones}
G(x)=\frac{c_1}{|x|^{12}}-\frac{c_2}{|x|^6},
\end{align}
and the repulsive Coulomb function
\begin{align} \label{form:Coulomb}
G(x) =\begin{cases}
-c\log|x|,& d=2,\\
\frac{c}{|x|^{d-2}},& d\ge 3.
\end{cases}
\end{align}
Turning back to \eqref{eqn:droplet:original}, while there is rich literature describing numerical simulations of~\eqref{eqn:droplet:original}, as mentioned above, comparatively less work has been done on proving analytical results about its statistical behavior. 
To the best of the authors' knowledge, results in this direction were first established in \cite{nguyen2024invariant}. In particular, for a variety of smooth potentials ($G\equiv 0$), it can be shown that \eqref{eqn:droplet:original} admits a unique invariant measure \cite{nguyen2024invariant}. However, the argument therein does not cover the setting of singularities, nor does it imply an explicit convergence rate. The main goal of our present paper is to investigate the ergodicity of \eqref{eqn:droplet:original} under the impact of singular potentials and, more importantly, to prove the existence of an exponential mixing rate. In what follows, we will provide
an overview of the main mathematical results.

\subsection{Overview of the main results.} Due to the presence of the memory, it is expected that \eqref{eqn:droplet:original} is not Markovian. In the previous work of \cite{bakhtin2005stationary, ito1964stationary, nguyen2024invariant} without the singularities, this issue 
was tackled by considering the phase space of trajectories in the negative time interval $C((-\infty,0];\rbb^d\times\rbb^d)$. Roughly speaking, given an initial path $(x_0,v_0)\in C((-\infty,0];\rbb^d\times\rbb^d)$ and a finite time window $[0,t]$, we construct a solution evolving according to \eqref{eqn:droplet:original}, resulting in a trajectory belonging to $C((-\infty,t];\rbb^d\times\rbb^d)$. Then, the whole path is ``shifted" back to time $0$ so as to remain in $C((-\infty,0];\rbb^d\times\rbb^d)$. While this approach is useful for establishing strong solutions to and statistical properties of~\eqref{eqn:droplet:original} for $G\equiv 0$, it is not suitable for proving ergodicity in the presence of singularities, nor for establishing a convergence rate to equilibrium. In a related work \cite{duong2023asymptotic} concerning the generalized Langevin equation, for the case of the memory kernel being a finite sum of exponentials, one can conveniently map the system therein to a finite-dimensional system of stochastic differential equations (SDEs). The approach however relies on the so-called fluctuation-dissipation relationship, which is not available in the setting of \eqref{eqn:droplet:original}.

To circumvent these difficulties, in the present article, we draw upon the framework of \cite{bonaccorsi2012asymptotic,
glatt2022short,nguyen2023ergodicity} by augmenting the original process $(x(t),v(t))$ with a ``history" variable $\eta(t)$. More precisely, we introduce $\eta(t)=\eta(t;s)$ given by
\begin{align*}
\eta(t;s) = x(t-s),\quad s, t\ge 0. 
\end{align*} 
We note that $\eta(t;s)$ satisfies the transport equation
\begin{align} \label{eqn:eta:original}
\partial_t \eta(t;s) = -\partial_s \eta(t;s),\quad \eta(t;0)=x(t).
\end{align}
To see the role of $\eta(t;s)$ in \eqref{eqn:droplet:original}, by making a simple change of variable $r =t-s$, the integral on the right-hand side of \eqref{eqn:droplet:original} is equivalent to
\begin{align*}
\int_{-\infty}^t\close H(x(t)-x(s))K(t-s)\,\mathrm{d}s& = \int_{0}^\infty\close H(x(t)-x(t-r))K(r)\,\mathrm{d}r \\
&= \int_{0}^\infty\close H(x(t)-\eta(t;r))K(r)\,\mathrm{d}r. 
\end{align*}
So, we may recast the $v$-equation of \eqref{eqn:droplet:original} as
\begin{align}
m\,\d v(t)&=- v\,\mathrm{d}t-\grad U(x(t))\,\mathrm{d}t- \grad G(x(t))\,\d t+\sigma\mathrm{d}W(t), \label{eqn:droplet:original:eta}\\
&\qquad +\int_{0}^\infty\close H(x(t)-\eta(t;r))K(r)\,\mathrm{d}r\,\mathrm{d}t.\nt
\end{align}
This results in a jointly Markov process $(x(t),v(t),\eta(t))$ evolving in an extended phase space, which will be described later in Section \ref{sec:main-result:functional-setting}. It is worth pointing out that the extended phase space approach in this work as well as in \cite{bonaccorsi2012asymptotic, nguyen2023ergodicity,nguyen2024invariant} is valid thanks to the critical assumption that the memory kernel is well-defined on $[0,\infty)$ and decays exponentially fast, cf., Assumption \ref{cond:K}. While the latter assumption is restrictive from a mathematical standpoint, it is physically well-motivated, as fluid viscosity is known to lead to exponential decay in time of the surface waves~\cite{Eddi2011a,Molacek2013b}. Moreover, the ergodicity results established below in Theorem \ref{thm:heuristic} can accommodate a rather wide class of nonlinear functions $U,G$ and $H$.

We note that while augmenting by $\eta$ is useful for analyzing ergodicity, we are mainly interested in the long-time statistical behavior of the pair $(x(t),v(t))$, i.e., the displacement and velocity of the walker. Our main result of the article concerning the convergence rate in terms of observables can be summarized as follows:

\begin{theorem} \label{thm:heuristic}
Under suitable assumptions on the functions $U$, $G$, $H$ and $K$, and suitable initial data $(x_0,v_0)$, let $(x(t),v(t))$ be the solution of \eqref{eqn:droplet:original}. Then, there exists a unique probability measure $\nu$ on $\R^d$ such that for every suitable observable $f\in C(\R^d;\rbb)$, the following holds:
\begin{align} \label{ineq:geometric-ergodicity:heuristic}
\Big|\E\big[ f(x(t),v(t))\big] -\int_{\R^d}f(x,v)\nu( \textup{d} x,\textup{d} v)\Big|\le Ce^{-ct},\quad t\ge 0,
\end{align}
for some positive constants $C$ and $c$ independent of $t$.
\end{theorem}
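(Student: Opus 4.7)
The plan is to work with the augmented Markov process $(x(t),v(t),\eta(t))$ on the extended phase space introduced above, so that standard tools for geometric ergodicity of Markov processes become available. Exponential mixing in the sense of \eqref{ineq:geometric-ergodicity:heuristic} should then follow from two ingredients: (i) a Lyapunov drift condition controlling both the usual mechanical energy and any approach to the singular set $\{x=0\}$, and (ii) an asymptotic (Girsanov-type) coupling between two copies of the process, in the spirit of the Hairer--Mattingly framework for non-compact, infinite-dimensional Markov semigroups. Projecting the resulting contraction on the extended space onto the $(x,v)$-marginal will give the stated bound.

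First, I would construct a Lyapunov function on the extended space of the form
\[
V(x,v,\eta) \;=\; \tfrac{1}{2}m|v|^2 + U(x) + G(x) + \lambda\langle x,v\rangle + \Phi(x) + \int_0^\infty \Psi(\eta(s))\, w(s)\,\mathrm{d}s,
\]
where $\Phi$ is a barrier diverging as $x\to 0$ fast enough to dominate $-\nabla G$ on approach, and $w(s)=e^{-\alpha s}$ is a weight that bundles the infinite-dimensional history into a single finite quantity. The cross term $\lambda\langle x,v\rangle$ with small $\lambda>0$ is the standard hypocoercive correction needed to extract dissipation in $x$ from the damping in $v$. A direct It\^o computation, using the transport equation $\partial_t\eta=-\partial_s\eta$ together with the exponential decay of $K$ from Assumption \ref{cond:K}, should then yield a drift inequality $\mathcal{L}V\le -cV+C$ on the admissible phase space, where $\mathcal{L}$ is the generator of the extended process. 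The exponential weight $w$ is precisely what allows the memory contribution to be bounded by the instantaneous state.

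From the Lyapunov inequality one extracts exponentially integrable return times to sublevel sets $\{V\le R\}$, which stay uniformly away from the singularity and have bounded velocity and bounded weighted history. On such a set I would build an asymptotic coupling: for two initial data $(x_0,v_0,\eta_0)$ and $(x_0',v_0',\eta_0')$, run the first copy with the given Brownian motion $W$ and the second with a shifted Brownian motion $W'=W+\int_0^\cdot u(s)\,\mathrm{d}s$, choosing the control $u$ so that $(x'(t)-x(t),v'(t)-v(t))\to 0$ almost surely while $\E\exp\bigl(\tfrac12\int_0^\infty|u(s)|^2\,\mathrm{d}s\bigr)<\infty$, so that Girsanov's theorem gives equivalence of the two driving laws. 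The control must compensate not only the smooth force mismatches from $U$ and from the pilot-wave term (the latter inherits the decay of $K$), but also the mismatch between the two histories $\eta,\eta'$, which damps out on the memory time scale.

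The main obstacle will be the singular potential. The control $u$ must steer the two copies together without ever pushing either trajectory into a neighborhood of $\{x=0\}$, where $\nabla G$ is unbounded and the Girsanov cost could blow up. I expect this forces a sequential coupling: attempt to couple, abort and reset on any excursion of either trajectory out of a fixed sublevel set of $V$, and use the Lyapunov bound to guarantee that the number of such aborts is geometric in $t$. Combining this asymptotic coupling with the Lyapunov drift, a Hairer--Mattingly-type criterion then yields exponential contraction of the extended semigroup in a suitable weighted Wasserstein distance on the extended phase space, from which \eqref{ineq:geometric-ergodicity:heuristic} follows by marginalization onto the $(x,v)$-coordinates.
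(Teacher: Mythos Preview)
Your overall strategy matches the paper's: pass to the extended Markov process $(x,v,\eta)$, establish a Lyapunov drift, and run the Hairer--Mattingly asymptotic-coupling machinery to get exponential contraction in a weighted Wasserstein distance on the extended space, then project to $(x,v)$. The differences are in how the two ingredients are executed.

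For the Lyapunov function, the paper does not use a generic barrier $\Phi(x)$ near the origin; it adds the specific correction $-\langle x,v\rangle/|x|$ (from the singular-Langevin literature) to the hypocoercive cross term $\kappa\langle x,v\rangle$. Applying the generator to $-\langle x,v\rangle/|x|$ is what produces the dissipative term $-a_4/|x|^{\beta_1}$ controlling approach to the singularity; a generic barrier would not obviously close the estimate. The history contribution is simply $\|\eta\|_{p_2}^{p_2}$ weighted by $K$ itself, whose differential inequality $K'\le -\delta K$ already supplies the dissipation.

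For the coupling, the paper does \emph{not} couple two processes and abort on excursions. It proves the asymptotic strong Feller property: for the Jacobian $J_{0,t}\xi$ and Malliavin derivative $A_{0,t}\zeta$, the control $\zeta$ is chosen so that $\rho_t=J_{0,t}\xi-A_{0,t}\zeta$ obeys a \emph{linear, deterministic} system with arbitrarily fast exponential decay; in particular $\zeta$ explicitly cancels the term $\nabla^2 G(x(t))\pi_x\rho_t$. The Girsanov cost then contains $\int_0^\infty |\nabla^2 G(x(t))|^2|\pi_x\rho_t|^2\,\mathrm{d}t$, which is handled because $|\pi_x\rho_t|\le Ce^{-2\alpha t}$ and, crucially, because Lemma~\ref{lem:moment-bound} part~2 gives the time-integrated exponential bound $\int_0^\infty e^{-\alpha t}\,\mathbb{E}\bigl[e^{G(x(t))}/|x(t)|^{\beta_1}\bigr]\,\mathrm{d}t<\infty$, which via condition~\eqref{cond:G:e^G>grad^2.G} dominates $|\nabla^2 G|^2$. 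This is exactly where the singularity is absorbed. Your abort/retry scheme would still need to control the accumulated Girsanov cost of the successful coupling attempt, and merely staying inside a sublevel set of $V$ does not by itself bound the time integral of $|\nabla^2 G(x(t))|^2$; you would end up needing an estimate of the same type as Lemma~\ref{lem:moment-bound} part~2, at which point the paper's direct route via asymptotic strong Feller is cleaner.
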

Here, $\E$ denotes the expectation with respect to the probability measure induced by $W(t)$. We refer the reader to Corollary \ref{cor:geometric-ergodicity} for a more precise version of Theorem \ref{thm:heuristic}. We note that Theorem \ref{thm:heuristic} extends an ergodicity result in \cite{nguyen2024invariant} established only for a polynomial potential $U$ and pilot-wave force $H$. In particular, the result presented in Theorem \ref{thm:heuristic} applies to a variety of singular potentials, e.g., the Riesz-type functions (i.e., $G\sim |x|^{-q}$ for $q > 0$ as $x\rightarrow 0$), of which the Lennard-Jones function \eqref{form:Lennard-Jones} and the Coulomb function \eqref{form:Coulomb} in dimension $d\ge 3$ are members, as well as the log function $G(x)=-\alpha\log|x| $ for some constant $\alpha>0$ sufficiently large. Also, thanks to the suitable energy estimates performed in Section \ref{sec:moment} below, \eqref{ineq:geometric-ergodicity:heuristic} is valid for a broad class of observables, including those having exponential growth. See Remark \ref{rem:G} and Remark \ref{rem:observable} for a further discussion of these points.

Historically, SDEs with memory similar to \eqref{eqn:droplet:original} were studied as early as in the seminal work \cite{ito1964stationary}. Since then, the stationary solution approach therein has been employed in \cite{bakhtin2005stationary,weinan2001gibbsian,
weinan2002gibbsian,herzog2021gibbsian,nguyen2024invariant} to investigate unique ergodicity. On the other hand, the extended phase space approach as in \eqref{eqn:droplet:original:eta} dates back to at least the work \cite{miller1974linear} and appeared later in \cite{bonaccorsi2012asymptotic, engel2000one, nguyen2023ergodicity,nguyen2024invariant}. Appending $\eta$ to $(x,v)$ allows one to rewrite \eqref{eqn:Langevin:original} as an appropriate Cauchy system while preserving the Markovian framework, which is very convenient for exploring mixing properties. However, the introduction of $\eta$ unfortunately induces an infinite dimensional system, which in comparison with \eqref{eqn:Langevin:original} requires a more careful analysis so as to establish geometric ergodicity. 

More specifically, for finite dimensional SDEs such as \eqref{eqn:Langevin:original}, mixing results are typically a consequence of suitable Lyapunov functions. In other words, the returning time to the center of the phase space is shown to be exponentially fast. Together with the strong Feller property and an irreducibility condition, these three properties are sufficient to establish a convergence rate with respect to a weighted total variation norm in terms of the Lyapunov function \cite{khasminskii2011stochastic}. We remark that while verifying the latter two properties is usually straightforward, the Lyapunov construction itself is much more nontrivial \cite{duong2023asymptotic,herzog2019ergodicity, lu2019geometric}, owing to the presence of singularities.

On the other hand, concerning \eqref{eqn:droplet:original} and \eqref{eqn:droplet:original:eta}, since this is an infinite-dimensional system with degenerate noise, it typically precludes access to the strong Feller property, let alone measuring the convergence rate with respect to total variation norm. Nevertheless, in this work, we effectively overcome the challenge by resorting to the framework of \emph{asymptotic coupling} developed in \cite{butkovsky2020generalized,hairer2011asymptotic} for stochastic PDEs. More precisely, instead of exploring total variation distance between probability measures, we opt for a more suitable Wasserstein metric defined in terms of a carefully chosen distance-like function $\vr$, cf. Section \ref{sec:main-result:geometric-ergodicity}. The convergence argument then relies on two crucial ingredients. The first is the \emph{contracting} property of $\vr$ for the Markov semigroup associated with $(x,v,\eta)$. The second is the $\vr$-\emph{small} property of bounded sets, to which the dynamics returns to infinitely often. (See Definition \ref{def:Lyapunov-contracting-dsmall} part 2. and part 3., respectively). The first can be achieved by establishing a so-called asymptotic strong Feller property, cf. Lemma \ref{lem:droplet:asymptotic-Feller}, which can be regarded as a large-time smoothing effect of the Markov semigroup. In turn, this follows from a delicate estimate on the derivative of the solutions with respect to initial data. The second can be validated by proving an irreducibility condition, cf. Proposition \ref{prop:irreducible}. In order to do so, we compare the walker dynamics with the classical Langevin equation and show that they behave similarly on any finite time window. It is important to point out that in both auxiliary results, we have to deal with corresponding control problems to ensure legitimate changes of measures via Girsanov's Theorem. We refer the reader to Proposition \ref{prop:contracting-dsmall} for the precise statements of the \emph{contracting} and $\vr$-\emph{small} properties, while their detailed proofs will be supplied at the end of Section \ref{sec:ergodicity}.

\subsection{Organization of the paper} 

The rest of the paper is organized as follows. In Section~\ref{sec:main-result}, we introduce the relevant function spaces (Section~\ref{sec:main-result:functional-setting}) and the assumptions (Section~\ref{sec:main-result:assumption}). We also state the main result of the paper, Theorem~\ref{thm:geometric-ergodicity}, which establishes the existence and uniqueness of the invariant probability measure $\nu$ for \eqref{eqn:droplet:original}, as well as the exponential mixing rate toward $\nu$. In Section~\ref{sec:moment}, we collect useful moment bounds on the solutions. Theorem~\ref{thm:geometric-ergodicity} is proved in Section~\ref{sec:ergodicity}. Numerical examples of the invariant measure for the case of a repulsive Coulomb potential in dimension $d=2$ are provided in Section~\ref{sec:numerics}.

\section{Main results} \label{sec:main-result}
\subsection{Functional setting} \label{sec:main-result:functional-setting}

We start by discussing the functional settings which will be employed for the analysis. We use $|\cdot|$ and $\la\cdot\,,\cdot\ra$, respectively to denote the Euclidean norm and inner product in $\rbb^d$. For $q\in (1,\infty)$, we introduce the memory space $\C_{q}$ given by
\begin{equation} \label{form:C_K}
\C_{q} =\Big\{\eta:[0,\infty)\to\rbb^d, \eta \text{ is measurable}, \|\eta\|_{q}\overset{\text{def}}{=}\Big|\int_0^{\infty}\close |\eta(r)|^{q}K(r)\d r\Big|^{1/q} <\infty\Big\},
\end{equation}
which is a Banach space (\cite[Theorem 3.13]{rudin1987real}) and whose dual is given by $\C_q^*= \C_p$, where $q^{-1}+p^{-1}=1$ (\cite[Theorem 6.16]{rudin1987real}). Also, for $\f\in \C^*_q=\C_p$, we denote the action of $\f$ on $\eta$ by $\la \f,\eta\ra_{q}$. Since we may identify $\f$ as an element in $\C_p$, the action $\la \f,\eta\ra_{q}$ may be understood as an integral, namely,
\begin{align}
\la \f,\eta\ra_{q}=\int_0^\infty\close \la \f(s),\eta(s)\ra K(s)\d s.\label{eqn:inner_product}
\end{align}
The product phase space for the dynamics~\eqref{eqn:droplet:original} is denoted as 
\begin{equation} \label{form:X_q}
\Xcal_{q} = \R^d\times \C_{q},
\end{equation}
where we recall $\R^d$ as in \eqref{form:R^d} is the phase space for the pair $(x,v)$.
We will also use $\|\cdot\|_{\Xcal_q}$ to denote the norm in $\Xcal_q$ given by
\begin{align}\label{form:norm:X_q}
\|(x,v,\eta)\|_{\Xcal_{q}}=|x|+|v|+\|\eta\|_{q},\quad (x,v,\eta)\in\Xcal_{q}.
\end{align}
The projections of $(x,v,\eta)$ onto marginal spaces are denoted by $\pi$, namely,
\begin{align} \label{form:pi}
\pi_x(x,v,\eta)=x,\quad \pi_v(x,v,\eta)=v,\quad \pi_\eta(x,v,\eta)=\eta.
\end{align}
For any bounded linear map $f\in \Xcal_{q}^*=L(\Xcal_{q};\rbb)$, the action of $f$ on $(x,v,\eta)$ is denoted by $\langle f,(x,v,\eta)\rangle_{\Xcal_q} $.

Following the framework of~\cite{bonaccorsi2012asymptotic,caraballo2007existence,
caraballo2008pullback,engel2000one,miller1974linear,
nguyen2024invariant}, we introduce the memory variable $\eta(t)$ given by
\begin{equation} \label{form:eta}
\eta(t;s)=x(t-s),\quad t,\,s\ge 0,
\end{equation}
which captures the past information of $x(t)$. For every $\eta_0\in \C_{q}$ and $x\in L^q([0,t];\rbb^d)$, observe that formally $\eta(t)$ obeys the following transport equation \cite{bonaccorsi2012asymptotic,pruss2013evolutionary}
\begin{align} \label{eqn:eta}
\partial_t \eta(t;\cdot)=-\partial_s \eta(t;\cdot),\quad \eta(t;0)=x(t),\quad \eta(0)=\eta_0.
\end{align}
Since the parameters $m$ and $\sigma$ in \eqref{eqn:droplet:original} do not affect
the analysis, we set $m=\sigma=1$ for the sake of simplicity. From equation~\eqref{eqn:eta} as well as \eqref{eqn:droplet:original:eta}, we may recast~\eqref{eqn:droplet:original} as follows: 
\begin{align}\label{eqn:droplet}
\d x(t)&=v(t)\d t, \nt \\
\d v(t)& = -v(t)\d t-\grad U(x(t))\d t-\grad G(x(t))\d t-\int_0^\infty\close H(x(t)-\eta(t;s))K(s)\d s\d t+\d W(t),\\
\d \eta(t)&=-\partial_s \eta(t)\d t,\quad \eta(t;0)=x(t).\nt 
\end{align}

\subsection{Main assumptions and well-posedness} \label{sec:main-result:assumption}
In this subsection, we state the main assumptions that will be employed throughout the analysis. We start with the memory kernel $K$ and impose the following condition:

\begin{assumption} \label{cond:K}
The memory kernel $K\in C^1([0,\infty);\rbb^+)$ satisfies 
\begin{align} \label{cond:K:1}
K'(t)\le -\delta K(t),\quad t\ge 0,
\end{align}
for some positive constant $\delta>0$.
\end{assumption}

\begin{remark} \label{rem:K}
1. We note that the condition~\eqref{cond:K:1} is crucial to establish useful moment bounds on the solutions, cf., Lemma \ref{lem:moment-bound}. In turn, the energy estimates will be invoked in Section \ref{sec:ergodicity} to prove the convergence rate toward equilibrium.

It is also worth mentioning that the differential inequality  \eqref{cond:K:1} implies that $K$ decays exponentially fast, i.e.,
\begin{align*}
K(t)\le K(0)e^{-\delta t},\quad t\ge 0,
\end{align*}
by virtue of Gr\"{o}nwall's inequality. However, \eqref{cond:K:1} is slightly more general than the above estimate and will be directly employed to produce suitable bounds on the variable $\eta(t)$, for instance in Eq.~\eqref{ineq:L.|eta|_(p_2)^(p_2)}. 

2. It is also important to point out that Assumption \ref{cond:K} does not cover singular memory kernels, e.g., $K(t) = \frac{e^{-\delta t}}{t^{\alpha}}$, $\alpha\in(0,1)$. Recently, an alternative method was developed in the work of \cite{liu2024dynamics,xu2022asymptotic,xu2024dynamics} to handle this types of irregular kernels. However, the approach therein seems to be only applicable to linear convolution whereas the pilot-wave function $H$ in equation \eqref{eqn:droplet} is a nonlinear function
, cf. Assumption \ref{cond:H} below. It is therefore not immediately clear how to adapt the technique of \cite{liu2024dynamics,xu2022asymptotic,xu2024dynamics} to our setting. We opt for leaving the issue of a singular memory kernel for future work.  

\end{remark}

With regard to the pilot-wave function $H$, we assume the following standard condition: 

\begin{assumption}\label{cond:H} $H\in C^1(\rbb^d;\rbb^d)$ satisfies
\begin{align} \label{cond:H:1}
\max\{|\grad H(x)|, |H(x)|\}\le a_H(|x|^{p_1}+1),\quad x\in\rbb^d,
\end{align}
for some constants $a_H>0$ and  $p_1\ge 0$. 
\end{assumption}

We note that the condition we impose on $H$ is quite general. In particular, it includes the function $H(x) = \text{J}_1(|x|)x/|x|$, where $\text{J}_1$ is the Bessel function of the first kind of order one, which has been employed in models and simulations of the pilot-wave dynamics of walkers~\cite{Eddi2011a,Molacek2013b,Oza2013,Turton2018}.

Next, we state the main assumption on the potential $U$ \cite{mattingly2002ergodicity,pavliotis2014stochastic}:

\begin{assumption} \label{cond:U} 1. The potential $U\in C^\infty(\rbb^d;[1,\infty))$ satisfies, for all $x\in\rbb^d$,
\begin{align}
\frac{1}{a_0}|x|^{q_0}-1\le  |U(x)| & \le  a_0(|x|^{q_0}+1), \label{cond:U(x)<x^q}\\
|\grad U(x)| & \le  a_0(|x|^{q_0-1}+1), \label{cond:U(x)<x^q-1}\\
\text{and}\quad|\grad^2 U(x)| & \le  a_0(|x|^{q_0-2}+1), \label{cond:U(x)<x^q-2}
\end{align}
for some positive constants $a_0$ and $q_0$, where $\nabla^2U$ denotes the Hessian of $U(x)$. Furthermore, there exists a positive constant $\varepsilon_1$ such that 
\begin{align}\label{cond:q_0}
q_0\ge 2\max\{1,p_1+\varepsilon_1\}>\max\{2p_1,p_1+1\},
\end{align}
where $p_1$ is as in Assumption~\ref{cond:H}. 
 
2. There exist positive constants $a_1,a_2$ such that
\begin{align} \label{cond:U:xU(x)>U}
\la x,\grad U(x)\ra\ge a_1 |x|^{q_0}-a_2,\quad x\in\rbb^d.
\end{align}
\end{assumption}

\begin{remark} \label{rem:U} While most of the conditions in Assumption \ref{cond:U} are standard, the condition \eqref{cond:q_0} states that the potential $U$ must dominate the pilot-wave force induced by $H$. For example, when $H=\text{J}_1$ which is a bounded function ($p_1=0$), then $U$ is assumed to have at least quadratic growth. In case $H$ is Lipschitz ($p_1=1$), $U$ is required to grow faster than a quadratic function.
\end{remark}

Concerning the singular potential $G$, we will make the following assumption~\cite{duong2023asymptotic,duong2024trend}.

\begin{assumption} \label{cond:G} 1. $G\in C^\infty(\rbb^d\setminus\{0\};\rbb)$ satisfies $G(x)\to \infty$ as $|x|\to 0$. Furthermore, there exist constants $\beta_1\ge 1$ and $a_3>0$ such that for all $x\in\rbb^d\setminus\{0\}$
\begin{align} 
|G(x)|&\le a_3\Big(1+|x|+\frac{1}{|x|^{\beta_1}}\Big) ,\label{cond:G:G<1/|x|^beta}\\
|\grad G(x)| &\le a_3\Big(1+\frac{1}{|x|^{\beta_1}}\Big),\label{cond:G:grad.G(x)<1/|x|^beta}\\
\text{and} \quad |\grad^2 G(x)|& \le a_3\Big(1+\frac{1}{|x|^{\beta_1+1}}\Big).\label{cond:G:grad^2.G(x)<1/|x|^beta}
\end{align}

2. There exist nonnegative constants $\beta_2\in[0,\beta_1)$, $a_4$ and $a_5$ such that 
\begin{equation} \label{cond:G:|grad.G(x)+q/|x|^beta_1|<1/|x|^beta_2}
\Big|\grad G(x) +a_4\frac{x}{|x|^{\beta_1+1}}\Big| \le \frac{a_5}{|x|^{\beta_2}}+a_5, \quad x\in\rbb^d\setminus\{0\}.
\end{equation}

3. There exists a positive constant $a_6$ such that
\begin{align} \label{cond:G:e^G>grad^2.G}
1+\frac{e^{G(x)}}{|x|^{\beta_1}}\ge a_6 |\grad^2 G(x)|^2, \quad x\in\rbb^d\setminus\{0\}.
\end{align}

\end{assumption}

\begin{remark} \label{rem:G}
Intuitively, part 1 in Assumption~\ref{cond:G} controls the behavior of the singular potential $G$ both as $x\rightarrow 0$ and $x\rightarrow\infty$, whereas part 2 implies that $\nabla G\sim-|x|^{-\beta_1}$ as $x\rightarrow 0$. Moreover, part 3 says that the singular potential must be sufficiently steep, a technical condition that allows us to ensure that $x(t)$ does not get too close to the origin.

We note that the condition \eqref{cond:G:|grad.G(x)+q/|x|^beta_1|<1/|x|^beta_2} is standard and will be employed to construct Lyapunov functions in Lemma \ref{lem:moment-bound}, part 1. On the other hand, condition \eqref{cond:G:e^G>grad^2.G} is actually crucial and will be employed to produce moment bounds on the derivatives of the solution with respect to initial data, ensuring the asymptotic strong Feller property in Lemma \ref{lem:droplet:asymptotic-Feller}. See \eqref{ineq:droplet:asymptotic-strong-Feller:|grad.G|^2} in particular. Altogether, they will be invoked to establish the main result on the convergence rate toward equilibrium. Also, while  the condition \eqref{cond:G:e^G>grad^2.G} is valid for both the Lennard-Jones function \eqref{form:Lennard-Jones} and the Coulomb function \eqref{form:Coulomb} in dimension $d\ge 3$, it is not necessarily valid for Coulomb $\log$ function in dimension $d=2$. In particular, given $G(x)=-\alpha\log|x|$ for some constant $\alpha>0$, since $\beta_1=1$, \eqref{cond:G:e^G>grad^2.G} requires that $\alpha\ge 3$.
\end{remark}

Turning to the auxiliary space $\C_q$ for the memory variable $\eta$, it is crucial to determine a suitable $\C_q$ in which $\eta(t)$ evolves. For this purpose, in view of conditions \eqref{cond:H:1} and~\eqref{cond:q_0}, we pick the parameter $p_2$ as follows: 

\begin{choice} \label{choice:p_2} The constant $p_2$ satisfies
\begin{equation} \label{cond:p_2}
2\max\{1,p_1+\varepsilon_1\}>p_2>\max\{2p_1,p_1+1\},
\end{equation}
where $p_1$ and $\varepsilon_1$ are as in~\eqref{cond:H:1} and~\eqref{cond:q_0}, respectively.
\end{choice}
Given a test function $\f\in C^1$ satisfying $\f,\f'\in \C_{p_2}^* $, we interpret equation \eqref{eqn:eta} for $\eta(t)\in\C_{p_2}$ as a weak formulation in $\C_{p_2}^*$. That is, we have formally
\begin{align*}
&\int_0^\infty \la \eta(t;s),\f(s)\ra K(s)\d s\\
&= \int_0^\infty \la \eta(0;s),\f(s)\ra K(s)\d s-\int_0^t \int_0^\infty \la \partial_s \eta(r;s),\f(s)\ra K(s)\d s\,\d r\\
&= \int_0^\infty \la \eta(0;s),\f(s)\ra K(s)\d s+K(0)\int_0^t\la \eta(r;0),\f(0)\ra\d r+\int_0^t \int_0^\infty  \la \eta(r;s),\f'(s)\ra K(s)\d s\,\d r\\
&\hspace{2cm}+\int_0^t \int_0^\infty  \la \eta(r;s),\f(s)\ra K'(s)\d s\,\d r.
\end{align*}
In the last implication above, we employed an integration by parts. Invoking $\eta(r;0)=x(r)$, we deduce that
\begin{align*}
&\int_0^\infty \la \eta(t;s),\f(t)\ra K(s)\d s\\
&=  \int_0^\infty \la \eta(0;s),\f(t)\ra K(s)\d s+K(0)\int_0^t\la x(r),\f(0)\ra\d r  +\int_0^t \int_0^\infty  \la \eta(r;s),\f'(s)\ra K(s)\d s\,\d r\\
&\hspace{2cm}+\int_0^t \int_0^\infty  \la \eta(r;s),\f(r)\ra K'(s)\d s\,\d r.
\end{align*} 

Having performed the above heuristic calculation, we are now in a position to define a solution of~\eqref{eqn:droplet}, which is analytically weak and stochastically strong. For this purpose, we will fix a stochastic basis $\mathcal{S}=\left(\Omega,\mathcal{F},\P,\{\mathcal{F}_t\}_{t\geq 0},W\right)$ satisfying the usual conditions~\cite{karatzas2012brownian}. 
\begin{definition} \label{def:solution}
Given $p_2$ as in Choice \ref{choice:p_2} and an initial condition $X_0\equiv(x_0,v_0,\eta_0)\in\Xcal_{p_2}$, a process $X(t)=(x(t),v(t),\eta(t))$ is called a solution of~\eqref{eqn:droplet} if for all $T> 0$, 
$y_1,y_2\in\rbb^d$ and test function $\f$ satisfying $\f,\f'\in \C_{p_2}^*$, the following holds $\P$-a.s. for a.e. $t\in[0,T]$
\begin{align*}
\la x(t),y_1\ra&=\la x_0,y_1\ra+\int_{0}^{t}\la v(r),y_1\ra\d r,\\
\la v(t),y_2\ra &=\la v_0,y_2\ra +\int_{0}^{t} \big\la -v(r)-\grad U(x(r))-\grad G(x(r)),y_2\big\ra\d r+\int_0^t \la y_2,\d W(r)\ra,\\
&\qquad  -\int_0^t\int_0^{\infty}\close\la  H(x(r)-\eta(r;s)) ,y_2\ra K(s)\d s\,\d r,\\
\la \eta(t),\f\ra_{p_2} 
&=  \la \eta(0),\f\ra_{p_2} +K(0)\int_0^t\la x(r),\f(0)\ra\d r +\int_0^t \int_0^\infty  \la \eta(r;s),\f'(s)\ra K(s)\d s\\
&\hspace{2cm}+\int_0^t \int_0^\infty  \la \eta(r;s),\f(s)\ra K'(s)\d s,
\end{align*}     
where $\langle\cdot,\cdot\rangle_{p_2}$ is defined in~\eqref{eqn:inner_product}. 
\end{definition}

The solutions of \eqref{eqn:droplet} in the sense of Definition \ref{def:solution} can be established following the method in \cite{nguyen2024invariant} while making use of the energy estimates in Lemma \ref{lem:moment-bound} below. Since the argument is quite standard, we refer the readers to \cite{nguyen2024invariant} for the details and will henceforth assume the well-posedness of \eqref{eqn:droplet} for every initial condition in $\Xcal_{p_2}$. As a consequence of the well-posedness of~\eqref{eqn:droplet}, we can thus introduce the Markov transition probabilities of the solution $X(t)$ by
\begin{align*}
P_t(X_0,A) :=\P\big(X(t;X_0)\in A\big),
\end{align*} 
which are well-defined for $t\ge 0$, initial condition $X_0\in \Xcal_{p_2}$ and Borel sets $A\subset \Xcal_{p_2}$.  Letting $\B_b(\Xcal_{p_2})$ denote the set of bounded Borel measurable functions $f:\Xcal_{p_2} \rightarrow \rbb$, the associated Markov semigroup $P_t:\B_b(\Xcal_{p_2})\to\B_b(\Xcal_{p_2})$ is defined as
\begin{align*}
P_t f(X_0)=\E\big[f(X(t;X_0))\big], \,\, f\in \B_b(\Xcal_{p_2}).
\end{align*}
Letting $\Pcal r(\Xcal_{p_2})$ be the space of probability measures on $\Xcal_{p_2}$, we denote by $P_t\nu$ the push-forward measure of $\nu\in\Pcal r(\Xcal_{p_2}) $ under the action of $P_t$. That is
\begin{align*}
P_t\nu(A) = \int_{\Xcal_{p_2}}\P(X(t;X_0)\in A)\nu(\d X_0),
\end{align*}
for all Borel sets $A\subset \Xcal_{p_2}$. Lastly, following the framework of \cite[Section 3.5]{da2004kolmogorov}, we denote by $\L$ the Kolmogorov operator associated with $P_t$ defined for $g\in C^2(\Xcal_{p_2})$ through the relation 
\begin{align*}
   P_t g(X_0) -g(X_0)=\E\int_0^t \L g(X(r))\d r.
\end{align*}
Particularly, in view of It\^o's formula \cite[Theorem 3.2]{da2014stochastic}, it holds that
\begin{align}\label{form:L}
\L g &:= \la v,\partial_x g\ra+\Big\la -v-\grad U(x)- \grad G(x)+\int_0^{\infty}\close H(x-\eta(s))K(s)\,\mathrm{d}s ,\partial_v g\Big\ra \nt  \\
&\qquad+\frac{1}{2} \triangle_v g + \int_0^\infty \close\la -\partial_s \eta(s),\partial_\eta g \ra K(s)\d s.
\end{align}
We note that in the last term on the above right-hand side, $\partial_\eta g$ is an element in $\C_{p_2}^*=\C_{\frac{p_2}{p_2-1}}$, the dual space of $\C_{p_2}$.

\subsection{Geometric ergodicity} \label{sec:main-result:geometric-ergodicity}

We now turn to the topic of the unique ergodicity and exponential mixing rate of~\eqref{eqn:droplet}. Recall that an element $\nu\in \Pcal r(\Xcal_{p_2} )$ is said to be {\it\textbf{invariant}} for the semigroup $P_t$ if 
\begin{align*}
P_t\nu=\nu,\quad t\ge 0.
\end{align*}
Alternatively, invariance is equivalent to the identity
\begin{align*}
\int_{\Xcal_{p_2} }\close P_t f(X)\nu(\d X)=\int_{\Xcal_{p_2} }\close f(X)\nu(\d X)
\end{align*}
being true for every $f\in \B_b(\Xcal_{p_2} )$. As mentioned in the Introduction, to address the problem of  ergodicity, we will draw upon the framework developed in \cite{hairer2006ergodicity,hairer2008spectral} and later popularized in \cite{butkovsky2020generalized,hairer2011asymptotic,
hairer2011theory,kulik2015generalized,kulik2017ergodic}, tailored to our setting. For the reader's convenience, we briefly review the theory below.

Recall that a function $\varrho:\Xcal_{p_2} \times\Xcal_{p_2} \to [0,\infty)$ is called \emph{distance-like} if it is symmetric, lower semi-continuous and $\vr(X,\Xt)=0$ if and only if $X=\Xt$; see \cite[Definition 4.3]{hairer2011asymptotic}. Let $\W_{\vr}$ be the Wasserstein distance in $\Pcal r(\Xcal_{p_2})$ associated with $\vr$ given by
\begin{align} \label{form:W_d}
\W_{\vr}(\nu_1,\nu_2)=\inf \E\, \vr(X,Y),
\end{align}
where the infimum is taken over all bivariate random variables $(X,Y)$ such that $X\sim \nu_1$ and $Y\sim \nu_2$. If $\vr$ is a metric in $\Xcal_{p_2}$, by the dual Kantorovich Theorem, $\W_{\vr}$ is equivalently defined as \cite[Theorem 5.10]{villani2009optimal}
\begin{align} \label{form:W_d:dual-Kantorovich}
\W_{\vr}(\nu_1,\nu_2)=\sup_{[f]_{\text{Lip},\vr}\leq 1}\Big|\int_{\Xcal_{p_2} }\close f(X)\nu_1(\d X)-\int_{\Xcal_{p_2} }\close f(X)\nu_2(\d X)\Big|,
\end{align}
where
\begin{align} \label{form:Lipschitz}
[f]_{\text{Lip},\vr}=\sup_{X\neq \Xt}\frac{|f(X)-f(\tilde{X})|}{\vr(X,\tilde{X})}.
\end{align}
On the other hand, if $\vr$ is not a metric but instead is a distance-like function, then the following one-sided inequality holds:
\begin{equation} \label{ineq:W_d(nu_1,nu_2):dual}
\W_{\vr}(\nu_1,\nu_2) \ge \sup_{[f]_{\text{Lip},\vr}\le 1}\Big|\int_{\Xcal_{p_2} }\close f(X)\nu_1(\d X)-\int_{\Xcal_{p_2} }\close f(X)\nu_2(\d X)\Big|.
\end{equation}
See \cite[Proposition A.3]{glatt2022mixing} for a proof  of~\eqref{ineq:W_d(nu_1,nu_2):dual}. 

The type of the Wasserstein distance $\W_{\vr}$ we are mainly interested in will be described through suitable Lyapunov structures. For this purpose, we introduce the function 
\begin{align} \label{form:Phi}
\Phi(x,v) = U(x)+G(x)+\frac{1}{2}|v|^2+\kappa\la x,v\ra-\frac{\la x,v\ra}{|x|},
\end{align}
where $\kappa>0$ satisfies the following:
\begin{choice} \label{choice:kappa}
The constant $\kappa>0$ is sufficiently small such that
\begin{align} \label{cond:kappa:1}
c_\kappa \Big(U(x)+\frac{1}{2}|v|^2\Big) \le U(x)+\frac{1}{2}|v|^2+\kappa\la x,v\ra-\frac{\la x,v\ra}{|x|} \le C_\kappa \Big(U(x)+\frac{1}{2}|v|^2\Big),
\end{align}
for some positive constants $c_\kappa,C_\kappa$ independent of $(x,v,\eta)$. Furthermore,
\begin{align} \label{cond:kappa:2}
\kappa < \min\left\{ \frac{1}{2(1+1/a_1)},\frac{a_1}{2}\right\},
\end{align}
where $a_1$ is the constant as in condition \eqref{cond:U:xU(x)>U}.
\end{choice}

\begin{remark} \label{rem:kappa}
The role of the $\kappa$-term in~\eqref{form:Phi} is to introduce dissipation due to the potential $U(x)$ for large $|x|$~\cite{mattingly2002ergodicity}, while the last term introduces dissipation due to the singular potential $G$ near the origin~\cite{herzog2019ergodicity,lu2019geometric}. The condition \eqref{cond:kappa:2} will be employed to derive an exponential bound on the solution $(x(t),v(t),\eta(t))$. See the argument for \eqref{ineq:int.e^(-alpha.t+G)/|x|} below.
\end{remark}

To establish a convergence rate for the semigroup $P_t$ associated with \eqref{eqn:droplet}, let $\Psi:\Xcal_{p_2}\to [0,\infty)$ be defined as
\begin{align} \label{form:Psi}
\Psi(x,v,\eta)=\Phi(x,v)+\|\eta\|^{p_2}_{p_2}=U(x)+G(x)+\frac{1}{2}|v|^2+\kappa\la x,v\ra-\frac{\la x,v\ra}{|x|}+\|\eta\|^{p_2}_{p_2},
\end{align}
and let $\varrho:\Xcal_{p_2}\times \Xcal_{p_2}\to[0,\infty)$ be the associated metric given by
\begin{align}\label{form:varrho}
\varrho(X,\Xt)=\inf \int_0^1 e^{\frac{1}{2}\Psi(\gamma(s))}\|\gamma'(s)\|_{\Xcal_{p_2}}\d s,
\end{align}
where the infimum is taken over all paths $\gamma\in C^1([0,1];\Xcal_{p_2})$ such that $\gamma(0)=X$ and $\gamma(1)=\Xt$. Following the framework of \cite{butkovsky2020generalized,glatt2022mixing,hairer2011asymptotic,
kulik2017ergodic,kulik2015generalized,nguyen2023small}, in our setting, for $N>0$, we consider the distance
\begin{align} \label{form:varrho_N}
\varrho_N(X,\Xt)=N\varrho (X,\Xt) \mi 1.
\end{align}
The actual convergence rate of~\eqref{eqn:droplet} toward equilibrium is measured through the distance-like function $\vrt_N$ defined as
\begin{align} \label{form:varrho_N_tilde}
\vrt_N(X,\Xt) =\sqrt{ \vr_N(X,\Xt)\big[1+e^{ \Psi(X)}+e^{ \Psi(\Xt)}   \big]}.
\end{align}

We now state the main result of the paper, which establishes the unique ergodicity and exponential mixing rate of \eqref{eqn:droplet} toward equilibrium:

\begin{theorem} \label{thm:geometric-ergodicity}
The semigroup $P_t$ admits a unique invariant probability measure $\nu$. Furthermore, for all $N$ sufficiently large 
\begin{align} \label{ineq:geometric-ergodciity}
\W_{\vrt_N}\big(P_t\nu_1,P_t\nu_2  \big) \le Ce^{-ct} \W_{\vrt_N}\big(\nu_1,\nu_2  \big),\quad t\ge 0,\,\nu_1,\nu_2\in \Pcal r(\Xcal_{p_2}),
\end{align}
for some positive constants $c$ and $C$ independent of $t,\,\nu_1$ and $\nu_2$. 
\end{theorem}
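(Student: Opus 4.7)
The plan is to verify the three ingredients that underlie the generalized Harris framework of \cite{butkovsky2020generalized,hairer2011asymptotic,kulik2015generalized}, namely a Lyapunov structure, a $\vr$--contracting property, and a $\vr$--small property on sublevel sets of the Lyapunov function, and then invoke an abstract Wasserstein ergodicity theorem to obtain \eqref{ineq:geometric-ergodciity}. The Lyapunov ingredient is the function $\Psi$ from \eqref{form:Psi}: using the moment bounds of \cref{sec:moment} (applied to both $\Phi$ via the dissipation built in through the $\kappa$-- and $-\la x,v\ra/|x|$--terms, and to $\|\eta\|_{p_2}^{p_2}$ via Assumption \ref{cond:K}), one expects to establish the standard drift inequality $P_t e^{\Psi}(X)\le C_1 e^{-c_1 t}e^{\Psi(X)}+C_2$. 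Together with the weight $1+e^{\Psi(X)}+e^{\Psi(\Xt)}$ hidden inside $\vrt_N$, this will drive the long-time decay.

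Next I would verify the $\vr$--\emph{contracting} property on sublevel sets $\{\Psi\le R\}$, which by the very definition \eqref{form:varrho} of $\varrho$ amounts to controlling the derivative $J_t= D X(t;X_0)$ of the flow with respect to the initial condition along typical noise paths. Following the asymptotic strong Feller strategy \cite{hairer2006ergodicity}, one introduces a Malliavin-type shift of the Brownian path designed to cancel the propagated perturbation along the $v$-direction; the propagation of the perturbation through the $x$-equation and through the transport equation for $\eta$ must then be absorbed. This is where the singular term enters: the control variation produces a $\grad^2 G(x(s))$ contribution in the cost integral, and Assumption \ref{cond:G}, especially the key inequality \eqref{cond:G:e^G>grad^2.G}, is what allows us to bound $\E\int_0^t |\grad^2 G(x(s))|^2\,\d s$ uniformly via the $e^{G(x)}/|x|^{\beta_1}$ term absorbed in $e^{\Psi}$. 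A standard Girsanov change of measure then yields contraction of $\vr_N(P_tX,P_t\Xt)$ by a factor strictly less than $1$ after some fixed time, provided $X,\Xt$ lie in a common sublevel set of $\Psi$.

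For the $\vr$--\emph{small} property on $\{\Psi\le R\}$, I would rely on the irreducibility statement (Proposition \ref{prop:irreducible}). Here the strategy is to compare the walker dynamics with the memoryless Langevin equation \eqref{eqn:Langevin:original} on a fixed finite time window: since the memory kernel $K$ decays exponentially, the pilot-wave integral is a bounded (in $L^2_t$) perturbation on finite intervals and can be removed via Girsanov, reducing to well-understood controllability for the Langevin equation with singular potential (cf.\ \cite{herzog2019ergodicity,lu2019geometric}). This lets us drive two solutions arbitrarily close in $\Xcal_{p_2}$ with uniformly positive probability, which, combined with the contracting step above and the exponential weight in $\vrt_N$, yields that $\{\Psi\le R\}$ is $\vr_N$-small for $N$ large.

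Finally, these three ingredients fit into the abstract framework (see \cite[Thm.~4.8]{hairer2011asymptotic} and its extensions in \cite{butkovsky2020generalized,kulik2015generalized}) that delivers both uniqueness of the invariant measure and the exponential contraction \eqref{ineq:geometric-ergodciity} in $\W_{\vrt_N}$; existence of $\nu$ then follows from the Lyapunov bound by a Krylov--Bogoliubov argument in the topology induced by $\vrt_N$. The hardest step will be the contracting property: the combination of degenerate noise, infinite-dimensional memory variable $\eta$, and singular potential $G$ forces us to balance the Malliavin cost against the singularity, and the precise quantitative role of \eqref{cond:G:e^G>grad^2.G} together with Choice \ref{choice:p_2} and \ref{choice:kappa} is what makes the whole scheme close.
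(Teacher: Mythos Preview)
Your proposal is correct and follows essentially the same approach as the paper: verify the Lyapunov drift for $e^{\Psi}$ via the moment estimates of \cref{sec:moment}, establish the $\vr_N$-contracting property via the asymptotic strong Feller argument (Malliavin shift plus the singular-control bound relying on \eqref{cond:G:e^G>grad^2.G}), establish the $\vr_N$-small property of bounded sets via the irreducibility Proposition~\ref{prop:irreducible} (comparison with the memoryless Langevin equation through Girsanov), and then invoke \cite[Theorem~4.8]{hairer2011asymptotic}. One small clarification: in this framework the contracting property is required for all pairs $X,\Xt$ with $\vr_N(X,\Xt)<1$, not merely on sublevel sets of $\Psi$; the sublevel sets enter only in the $\vr_N$-small step.
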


The proof of Theorem \ref{thm:geometric-ergodicity} consists of three important ingredients. The first is the Lyapunov function involving $e^{\Psi(X)}$ which is used to prove that the returning time is exponentially fast. The approach that we employ to construct the Lyapunov function is drawn from the technique in \cite{lu2019geometric} tailored to our setting. The second is the contracting property of the semigroup $P_t$ with respect to $\vr_N$. In other words, if two initial data are close enough, then so are the two solutions for all time $t\ge0$. The last property is the $\vr$-smallness of bounded sets.  
Heuristically, this means that the distance between two solutions with initial data selected from a bounded set remains uniformly bounded for all time. The Lyapunov structure will be presented in Section \ref{sec:moment} whereas the latter two properties will be supplied in Section \ref{sec:ergodicity:proof-of-theorem}. The proof of Theorem \ref{thm:geometric-ergodicity} will also be provided in Section \ref{sec:ergodicity:proof-of-theorem}.

As a consequence of Theorem \ref{thm:geometric-ergodicity} and the relation \eqref{ineq:W_d(nu_1,nu_2):dual}, we obtain the following convergence rate in terms of observables.

\begin{corollary} \label{cor:geometric-ergodicity}
For all $X_0\in\Xcal_{p_2}$ and $f\in C^1(\Xcal_{p_2};\rbb)$ satisfying $[f]_{\textup{Lip},\vrt_N}<\infty$, the following holds:
\begin{align} \label{ineq:geometric-ergodciity:P_tf}
\Big|P_t f(X_0)-\int_{\Xcal_{p_2}}\close f(X)\nu(\emph{d} X)\Big|\le Ce^{-ct},\quad t\ge 0,
\end{align}
for some positive constants $c$ and $C$ independent of $t$.

\end{corollary}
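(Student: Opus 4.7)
The plan is to derive this corollary as a direct consequence of the Wasserstein contraction in Theorem \ref{thm:geometric-ergodicity} combined with the one-sided dual inequality \eqref{ineq:W_d(nu_1,nu_2):dual} for distance-like functions. First, I would recognize that $P_t f(X_0)$ may be written as $\int_{\Xcal_{p_2}} f(X)\,(P_t\delta_{X_0})(\textup{d} X)$, and that invariance of $\nu$ gives $\int f\,\textup{d}\nu=\int f\,\textup{d}(P_t\nu)$. Hence
\begin{align*}
\Big|P_tf(X_0)-\int_{\Xcal_{p_2}}f(X)\nu(\textup{d} X)\Big|
=\Big|\int f\,\textup{d}(P_t\delta_{X_0})-\int f\,\textup{d}(P_t\nu)\Big|.
\end{align*}

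Next, by rescaling $f$ by $L:=[f]_{\mathrm{Lip},\vrt_N}$, the function $f/L$ lies in the class over which the supremum in \eqref{ineq:W_d(nu_1,nu_2):dual} is taken. Applying that inequality with $\nu_1=P_t\delta_{X_0}$ and $\nu_2=P_t\nu$ yields
\begin{align*}
\Big|P_tf(X_0)-\int f\,\textup{d}\nu\Big|
\le [f]_{\mathrm{Lip},\vrt_N}\,\W_{\vrt_N}\bigl(P_t\delta_{X_0},P_t\nu\bigr).
\end{align*}
Invoking Theorem \ref{thm:geometric-ergodicity} with $\nu_1=\delta_{X_0}$ and $\nu_2=\nu$, the right-hand side is bounded by $[f]_{\mathrm{Lip},\vrt_N}\,Ce^{-ct}\,\W_{\vrt_N}(\delta_{X_0},\nu)$, so the proof reduces to showing that $\W_{\vrt_N}(\delta_{X_0},\nu)$ is finite.

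The only step that requires any work is this last point, which will be handled with the Lyapunov estimates from Section \ref{sec:moment}. Since $\delta_{X_0}\otimes\nu$ is itself a coupling of $\delta_{X_0}$ and $\nu$, one has
\begin{align*}
\W_{\vrt_N}(\delta_{X_0},\nu)\le \int_{\Xcal_{p_2}}\vrt_N(X_0,X)\,\nu(\textup{d} X).
\end{align*}
Using $\vr_N\le 1$ together with $\sqrt{a+b}\le\sqrt{a}+\sqrt{b}$ in the definition \eqref{form:varrho_N_tilde}, this is controlled by a constant times $1+e^{\Psi(X_0)/2}+\int e^{\Psi(X)/2}\nu(\textup{d} X)$. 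Finiteness of the last integral follows from the fact that $\nu$, being the invariant measure produced by the Lyapunov framework of Theorem \ref{thm:geometric-ergodicity}, inherits an exponential moment bound of the form $\int e^{\Psi(X)}\nu(\textup{d} X)<\infty$ established in Section \ref{sec:moment}.

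The "main obstacle" here is conceptually minor: it is verifying integrability of $\vrt_N(X_0,\cdot)$ against $\nu$, which boils down to the exponential Lyapunov moment for the invariant measure. Once those ingredients are assembled, the estimate \eqref{ineq:geometric-ergodciity:P_tf} follows immediately by combining the displayed inequalities and absorbing the finite quantity $[f]_{\mathrm{Lip},\vrt_N}\,\W_{\vrt_N}(\delta_{X_0},\nu)$ into the constant $C$.
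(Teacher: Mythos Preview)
Your proposal is correct and follows essentially the same route as the paper: apply the one-sided dual inequality \eqref{ineq:W_d(nu_1,nu_2):dual} to $P_t\delta_{X_0}$ and $\nu$, then invoke Theorem \ref{thm:geometric-ergodicity} with $\nu_1=\delta_{X_0}$, $\nu_2=\nu$ and absorb $[f]_{\mathrm{Lip},\vrt_N}\,\W_{\vrt_N}(\delta_{X_0},\nu)$ into the constant. You are slightly more thorough than the paper in explicitly checking that $\W_{\vrt_N}(\delta_{X_0},\nu)<\infty$ via the Lyapunov moment bound, a point the paper leaves implicit.
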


The proof of Corollary \ref{cor:geometric-ergodicity} will be given in Section \ref{sec:ergodicity:proof-of-theorem}.

\begin{remark} \label{rem:observable}
Following \cite[Proposition A.9]{glatt2022mixing}, a sufficient condition for $f\in C^1(\Xcal_{p_2};\rbb)$ satisfying $[f]_{\textup{Lip},\vrt_N}<\infty$ is that
\begin{align*}
\sup_{X\in \Xcal_{p_2}} \frac{\max\{f(X),\|Df(X)\|_{\Xcal_{p_2}^*}\}}{\sqrt{1+e^{\frac{1}{2}\Psi(X)}}}<\infty.
\end{align*}
In the above, $Df$ denotes the Fr\'echet derivative. In particular, it is not difficult to verify that the class of polynomials $f(X)$ satisfies the above condition.
\end{remark}

\section{Moment estimates} \label{sec:moment}

In this section, we establish moment bounds on the solution of~\eqref{eqn:droplet}, which will be employed to prove the main results in Section~\ref{sec:ergodicity}. Throughout the rest of the paper, $c$ and $C$ denote generic positive constants. The main parameters that they depend on will appear between parentheses, e.g., $c(T,q)$ is a function of $T$ and $q$.

\begin{lemma} \label{lem:moment-bound}
Given $(x_0,v_0,\eta_0)\in \Xcal_{p_2}=\rbb^d\times\rbb^d\times \C_{p_2}$ where $p_2$ is as in Choice \ref{choice:p_2}, let $(x(t),v(t),\eta(t))$ be the solution of \eqref{eqn:droplet}. Then, the following holds:

1. There exist positive constants $c_1$ and $C_1$ such that
\begin{align} \label{ineq:E.e^Phi(t)<C.e^(-ct).e^Phi(0)+C}
&\E \exp\big\{ \Phi(x(t),v(t))+ \|\eta(t)\|^{p_2}_{p_2}\big\}  \nt \\
& \le C_1 e^{-c_1 t}\exp\big\{  \Phi(x_0,v_0)+\|\eta_0\|^{p_2}_{p_2}\big\}+C_1,\quad (x_0,v_0,\eta_0)\in \Xcal_{p_2},\,t\ge 0,
\end{align}
where $\Phi$ is the function defined in \eqref{form:Phi}.

2. For all $\alpha$ sufficiently large, it holds that
\begin{align} \label{ineq:int.e^(-alpha.t+G)/|x|}
\int_0^\infty\close e^{-\alpha t }\,\E \bigg[\frac{e^{G(x(t))}}{|x(t)|^{\beta_1} }\bigg] \emph{d} t  <C_\alpha \exp\big\{ \Phi(x_0,v_0)+\|\eta_0\|^{p_2}_{p_2}\big\},\quad (x_0,v_0,\eta_0)\in \Xcal_{p_2}
\end{align}
for some positive constant $C_\alpha$ independent of $(x_0,v_0,\eta_0)$. In the above, $\beta_1$ is the constant from Assumption \ref{cond:G}.

%

\end{lemma}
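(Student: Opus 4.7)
My plan is to deduce both assertions from a single Lyapunov-type inequality for the full-state exponential, namely
\begin{align*}
\L e^{\Psi(X)} \le -c_1\, e^{\Psi(X)} - c_2\, \frac{e^{\Psi(X)}}{|x|^{\beta_1}} + C_1 \quad \text{on } \Xcal_{p_2}\setminus\{x=0\},
\end{align*}
with $\Psi$ as in \eqref{form:Psi}. Given this bound, part 1 follows from Dynkin's formula and Gr\"onwall, and part 2 follows by running Dynkin against the weight $e^{-\alpha t}$, integrating over $[0,\infty)$, and comparing $e^{\Psi}/|x|^{\beta_1}$ with $e^{G}/|x|^{\beta_1}$. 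Because $e^{\Psi}$ is only $C^2$ off the singular set $\{x=0\}$, each application of It\^o will be performed after localizing at a stopping time $\tau_R = \inf\{t\ge 0: |x(t)|\le 1/R \text{ or } \|X(t)\|_{\Xcal_{p_2}} \ge R\}$, then passing $R\to\infty$ via monotone convergence and the bound itself.

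Next I would compute $\L\Psi$ piece by piece. The standard cancellation in $\L(U + \tfrac{1}{2}|v|^2)$ leaves $-|v|^2 + d/2$ plus a pilot-wave contribution that, by \eqref{cond:H:1} and Jensen, is bounded by $C(|x|^{p_1} + \|\eta\|_{p_2}^{p_1} + 1)$. Acting $\L$ on $\kappa\langle x,v\rangle$ and invoking \eqref{cond:U:xU(x)>U} yields the coercive piece $-\kappa a_1 |x|^{q_0}$. The singular correction $-\langle x,v\rangle/|x|$ is the crucial contribution: its $\partial_x$ part gives $-|v|^2/|x| + \langle x,v\rangle^2/|x|^3$, which is nonpositive by Cauchy--Schwarz, while the $v$-drift paired with $-x/|x|$ hits $-\nabla G$, and \eqref{cond:G:|grad.G(x)+q/|x|^beta_1|<1/|x|^beta_2} converts this into the dissipative $-a_4/|x|^{\beta_1}$ together with a subleading $|x|^{-\beta_2}$ piece ($\beta_2<\beta_1$) absorbed by Young's inequality. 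For the history variable, integration by parts using $\eta(t;0)=x(t)$ and \eqref{cond:K:1} yields $\L\|\eta\|_{p_2}^{p_2}\le K(0)|x|^{p_2}-\delta\|\eta\|_{p_2}^{p_2}$. The exponent constraints \eqref{cond:q_0} and \eqref{cond:p_2} ensure the pilot-wave remainders and $K(0)|x|^{p_2}$ are absorbed into $-c|x|^{q_0}-c\|\eta\|_{p_2}^{p_2}$. Passing from $\L\Psi$ to $\L e^{\Psi} = e^{\Psi}(\L\Psi + \tfrac{1}{2}|\partial_v\Psi|^2)$, the It\^o correction $|\partial_v\Psi|^2 = \bigl|v + \kappa x - x/|x|\bigr|^2 \le C(|v|^2 + |x|^2 + 1)$ is dominated by the coercivity thanks to $q_0>2$ from \eqref{cond:q_0} and the smallness of $\kappa$ prescribed by Choice~\ref{choice:kappa}, yielding the displayed Lyapunov inequality.

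For part 1, Dynkin's formula gives $\tfrac{d}{dt}\E e^{\Psi(X(t))} \le -c_1\E e^{\Psi(X(t))} + C_1$, whence Gr\"onwall delivers \eqref{ineq:E.e^Phi(t)<C.e^(-ct).e^Phi(0)+C}. For part 2, applying Dynkin to $t\mapsto e^{-\alpha t}e^{\Psi(X(t))}$ for any $\alpha>0$, integrating over $[0,\infty)$, and discarding nonnegative terms produces
\begin{align*}
c_2\int_0^\infty e^{-\alpha t}\,\E\Big[\frac{e^{\Psi(X(t))}}{|x(t)|^{\beta_1}}\Big]\,\d t \le e^{\Psi(X_0)} + \frac{C_1}{\alpha}.
\end{align*}
From $U\ge 1$, $\|\eta\|_{p_2}^{p_2}\ge 0$, and \eqref{cond:kappa:1}, one has $U + \tfrac{1}{2}|v|^2 + \kappa\langle x,v\rangle - \langle x,v\rangle/|x| \ge c_\kappa U \ge c_\kappa$, hence $\Psi\ge G + c_\kappa$ and $e^{\Psi}/|x|^{\beta_1} \ge e^{c_\kappa}\,e^{G}/|x|^{\beta_1}$, which yields \eqref{ineq:int.e^(-alpha.t+G)/|x|}. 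I expect the main obstacle to lie in the bookkeeping of singular terms near $|x|=0$: beyond the useful $-a_4/|x|^{\beta_1}$ dividend, the generator spawns bad singular cross-contributions such as $-\langle v,\nabla G\rangle$ from $\L\tfrac{1}{2}|v|^2$ and $\langle \kappa x, -\nabla G\rangle$ from $\L(\kappa\langle x,v\rangle)$, which must be tamed by Young's inequality and \eqref{cond:G:grad.G(x)<1/|x|^beta} without consuming the entire $-a_4/|x|^{\beta_1}$ reserve that part 2 needs.
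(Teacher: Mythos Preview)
Your proposal is correct and follows essentially the same route as the paper: compute $\L\Psi$ term by term, pass to $\L e^\Psi$ via the It\^o correction controlled through Choice~\ref{choice:kappa}, then apply Dynkin/Gr\"onwall for part~1 and weighted Dynkin together with $\Psi\ge G$ for part~2. Your closing worry about $-\langle v,\nabla G\rangle$ dissolves once you group $G$ with $U+\tfrac12|v|^2$ in the Hamiltonian piece (as the paper does), since $\L G=\langle v,\nabla G\rangle$ cancels it exactly; only the $-\kappa\langle x,\nabla G\rangle$ contribution survives, and that is handled via \eqref{cond:G:grad.G(x)<1/|x|^beta} and Young at the cost of a harmless $\tfrac{\varepsilon}{4}|x|^{-\beta_1}$.
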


\begin{proof} 1. Recalling the operator $\L$ from \eqref{form:L}, we first apply $\L$ to $U(x)+G(x)+\frac{1}{2}|v|^2+\kappa\la x,v\ra$ and obtain the identity
\begin{align} \label{eqn:L.U+G+v^2+<x,v>}
\L \Big(U(x)+G(x)+\frac{1}{2}|v|^2+\kappa\la x,v\ra\Big) &= -(1-\kappa)|v|^2+ \int_0^\infty\close \la H(x-\eta(s)),v+\kappa x\ra K(s)\d s+\frac{d}{2} \nt \\
&\qquad -\kappa \la x,\grad U(x)+\grad G(x)\ra .
\end{align}
Since $H$ satisfies condition \eqref{cond:H:1}, we employ the Cauchy-Schwarz inequality to estimate the integral on the above right-hand side as follows:
\begin{align*}
&\left|\int_0^{\infty}\close \la H(x-\eta(s)), \kappa x+v\ra K(s)\d s\right|\\&\le c\big(|x|^{1+p_1}+|v|\,|x|^{p_1}+|x|+|v|\big)\|K\|_{L^1(\rbb^+)}+c\big(|x|+|v|\big)\int_0^{\infty}\close|\eta(s)|^{p_1}K(s)\d s.
\end{align*}
Letting $\varepsilon$ be small and chosen later, by the $\varepsilon$-Young inequality and the condition \eqref{cond:q_0} on $q_0$, we observe that
\begin{align*}
c\big(|x|^{1+p_1}+|v|\,|x|^{p_1}+|x|+|v|\big)\|K\|_{L^1(\rbb^+)} \le \frac{1}{4}\varepsilon|v|^2+\frac{1}{8}\varepsilon |x|^{q_0}+C, 
\end{align*}
for some positive constant $C=C(\varepsilon)$ that might grow arbitrarily large as $\varepsilon\rightarrow 0$. Likewise, 
\begin{align*}
c(|x|+|v|)\int_0^{\infty}\close|\eta(s)|^{p_1}K(s)\d s&\le \frac{1}{8}\varepsilon|x|^2+\frac{1}{4}\varepsilon|v|^2+\frac{1}{4}\varepsilon\int_0^{\infty}\close|\eta(s)|^{p_2}K(s)\d s+C\\
&\leq\frac{1}{8}\varepsilon|x|^{q_0}+\frac{1}{4}\varepsilon|v|^2+\frac{1}{4}\varepsilon\|\eta\|^{p_2}_{p_2}+C,
\end{align*}
where, again, $C=C(\varepsilon)$ may be arbitrarily large. Concerning the cross term $\la x,\grad U(x)+\grad G(x)\ra$ on the right-hand side of \eqref{eqn:L.U+G+v^2+<x,v>}, we invoke conditions \eqref{cond:U:xU(x)>U} and~\eqref{cond:G:grad.G(x)<1/|x|^beta} to infer the bound
\begin{align*}
-\kappa\la x, \grad U(x)+\grad G(x)\ra &\le -\kappa a_1|x|^{q_0}+ C|x|+C\frac{1}{|x|^{\beta_1-1}}+C\\
&\le -\Big(\kappa a_1-\frac{1}{4}\varepsilon\Big)|x|^{q_0}+\frac{1}{4}\varepsilon\frac{1}{|x|^{\beta_1}}+C,
\end{align*}
where the last inequality follows from the fact that $q_0\ge 2$, cf. condition \eqref{cond:q_0}, and $\beta_1\ge 1$, cf. Assumption \ref{cond:G}. Together with the identity \eqref{eqn:L.U+G+v^2+<x,v>}, we deduce the estimate
\begin{align} \label{ineq:L.U+G+v^2+<x,v>}
&\L \Big(U(x)+G(x)+\frac{1}{2}|v|^2+\kappa\la x,v\ra\Big)  \nt \\
& \le -\Big(1-\kappa-\frac{1}{2}\varepsilon\Big)|v|^2-\Big(\kappa a_1-\frac{1}{2}\varepsilon\Big)|x|^{q_0}+\frac{1}{4}\varepsilon\|\eta\|^{p_2}_{p_2} +\frac{1}{4}\varepsilon\frac{1}{|x|^{\beta_1}}+C.
\end{align}

Next, considering $\|\eta\|^{p_2}_{p_2}$, note that
\begin{align*}
\partial_\eta\big(\|\eta\|^{p_2}_{p_2}\big)=p_2|\eta|^{p_2-2}\eta,
\end{align*}
from which it follows that
\begin{align*}
\L\|\eta\|^{p_2}_{p_2} &=  -\int_0^\infty \la \partial_s\eta(s),\eta(s)\ra\cdot p_2 |\eta(s)|^{p_2-2}K(s)\d s  \\
& = -\int_0^{\infty}\close \dds\big(|\eta(s)|^{p_2}\big) K(s)\d s.
\end{align*}
Using integration by parts on the above right-hand side yields 
\begin{align}
\L\|\eta\|^{p_2}_{p_2}&=|\eta(0)|^{p_2}K(0)+\int_0^{\infty}\close |\eta(s)|^{p_2} K'(s)\d s \notag\\
&\le |\eta(0)|^{p_2}K(0)-\delta\int_0^{\infty}\close|\eta(s)|^{p_2} K(s)\d s\notag\\
&=|\eta(0)|^{p_2}K(0)-\delta\|\eta\|^{p_2}_{p_2} \nt \\
&\le \frac{1}{4}\varepsilon |\eta(0)|^{q_0}-\delta\|\eta\|^{p_2}_{p_2}+C, \label{ineq:L.|eta|_(p_2)^(p_2)}
\end{align}
where the first inequality follows from Assumption~\ref{cond:K}, namely, $K'(s)\le -\delta K(s)$.

Turning to the cross term $-\frac{\la x,v\ra}{|x|}$, applying $\L$ to $-\frac{\la x,v\ra}{|x|}$ gives
\begin{align} \label{eqn:L.<x,v>/x}
-\L \left(\frac{\la x,v\ra}{|x|}\right)  &= -\frac{|v|^2}{|x|}+\frac{|\la x,v\ra|^2}{|x|^3}+\frac{\la x,v\ra}{|x|}+\frac{\la x,\grad U(x)+\grad G(x)\ra}{|x|} \nt \\
&\qquad -\frac{1}{|x|}\int_0^\infty \la H(x-\eta(s)),x\ra K(s)\d s.
\end{align}
Using the Cauchy-Schwarz inequality, we observe that
\begin{align*}
-\frac{|v|^2}{|x|}+\frac{|\la x,v\ra|^2}{|x|^3} \le 0,
\end{align*}
and that
\begin{align*}
&\frac{\la x,v\ra}{|x|}+\frac{\la x,\grad U(x)\ra}{|x|}-\frac{1}{|x|}\int_0^\infty\close \la H(x-\eta(s)),x\ra K(s)\d s\\
&\le |v|+|\grad U(x)| + \int_0^\infty\close |H(x-\eta(s))| K(s)\d s\\
&\le \frac{1}{2}\varepsilon|v|^2+C|x|^{q_0-1}+C|x|^{p_1}+C\int_0^\infty |\eta(s)|^{p_1}K(s)\d s+C\\
&\le \frac{1}{2}\varepsilon|v|^2+\frac{1}{4}\varepsilon|x|^{q_0}+\frac{1}{4}\varepsilon\|\eta\|^{p_2}_{p_2}+C,
\end{align*}
where, in the second to last implication above, we employed \eqref{cond:U(x)<x^q-1} and \eqref{cond:H:1}. Concerning the last term involving $\grad G$ on the right-hand side of \eqref{eqn:L.<x,v>/x}, we invoke \eqref{cond:G:|grad.G(x)+q/|x|^beta_1|<1/|x|^beta_2} as follows:
\begin{align*}
\frac{\la x,\grad G(x)\ra}{|x|} &=-a_4\frac{1}{|x|^{\beta_1}}+\frac{1}{|x|}\left\la x,\grad G(x)+a_4 \frac{x}{|x|^{\beta_1+1}} \right\ra\\
&\le -a_4\frac{1}{|x|^{\beta_1}}+a_5\frac{1}{|x|^{\beta_2}}+a_5\\
&\le -\Big(a_4-\frac{1}{4}\varepsilon\Big)  \frac{1}{|x|^{\beta_1}}+C,
\end{align*}
where in the last implication, we employed the fact that $\beta_1>\beta_2$. Together with the identity \eqref{eqn:L.<x,v>/x}, we obtain
\begin{align} \label{ineq:L.<x,v>/x}
-\L \left(\frac{\la x,v\ra}{|x|}\right) \le \frac{1}{2}\varepsilon|v|^2+\frac{1}{4}\varepsilon|x|^{q_0}+\frac{1}{4}\varepsilon\|\eta(s)\|^{p_2}_{p_2} -\Big(a_4-\frac{1}{4}\varepsilon\Big)  \frac{1}{|x|^{\beta_1}}+C.
\end{align}
Now, we collect \eqref{ineq:L.U+G+v^2+<x,v>}, \eqref{ineq:L.|eta|_(p_2)^(p_2)} and \eqref{ineq:L.<x,v>/x} to arrive at the bound
\begin{align*} 
\L\big[\Phi(x,v)+\|\eta\|^{p_2}_{p_2}\big] & \le -(1-\kappa-\varepsilon)|v|^2-\Big(\kappa a_1-\frac{3}{4}\varepsilon\Big)|x|^{q_0}-(\delta-\varepsilon)\|\eta\|^{p_2}_{p_2} -(a_4-\varepsilon)\frac{1}{|x|^{\beta_1}} \nt \\
&\qquad+\frac{1}{4}\varepsilon |\eta(0)|^{q_0} +C.
\end{align*}
As a consequence, we invoke It\^o's formula while making use of the boundary condition $\eta(t;0)=x(t)$ to see that
\begin{align} \label{ineq:d.Phi(t)}
\d \big[\Phi(t)+\|\eta(t) \|^{p_2}_{p_2}\big] & \le -(1-\kappa-\varepsilon)|v(t)|^2\d t-(\kappa a_1-\varepsilon)|x(t)|^{q_0}\d t-(\delta-\varepsilon)\|\eta(t)\|^{p_2}_{p_2}\d t \nt \\ 
&\qquad-(a_4-\varepsilon)\frac{1}{|x(t)|^{\beta_1}}\d t+C\d t+ \d M(t),
\end{align}
where the semi-Martingale process $M(t)$ is defined as
\begin{align} \label{form:M(t)}
M(t) =\int_0^ t\Big\la v(r)+\kappa x(r)+\frac{x(r)}{|x(r)|} , \d W(r) \Big\ra,
\end{align}
whose quadratic variation process is given by
\begin{align} \label{form:M(t):quadratic-variation}
\la M\ra(t)=\int_0^t \Big|v(r)+\kappa x(r)+\frac{x(r)}{|x(r)|}\Big|^2\d r.
\end{align}

Turning back to \eqref{ineq:E.e^Phi(t)<C.e^(-ct).e^Phi(0)+C}, we apply It\^o's formula to $\exp\big\{\Phi(t)+\|\eta(t)\|^{p_2}_{p_2}\big\}$ and obtain the identity
\begin{align*}
&\frac{\d \exp\big\{\Phi(t)+\|\eta(t)\|^{p_2}_{p_2}\big\} }{\exp \big\{\Phi(t)+\|\eta(t)\|^{p_2}_{p_2}\big\} }\\
& = \d \big[\Phi(t)+\|\eta(t)\|^{p_2}_{p_2}\big] +\frac{1}{2}\la \d \big[\Phi(t)+\|\eta(t)\|^{p_2}_{p_2}\big],\d \big[\Phi(t)+\|\eta(t)\|^{p_2}_{p_2}\big]\big\ra\\
&= \d \big[\Phi(t)+\|\eta(t)\|^{p_2}_{p_2}\big] +\frac{1}{2}\Big| v(t)+\kappa x(t)+\frac{x(t)}{|x(t)|} \Big|^2\d t,
\end{align*}
where the last equality follows from the expression \eqref{form:M(t):quadratic-variation}. By the $\varepsilon$-Young inequality, we have
\begin{align*}
\frac{1}{2}\Big| v+\kappa x+\frac{x}{|x|} \Big|^2 & \le \frac{1+\frac{2\kappa}{a_1} }{2}|v|^2+\frac{1+\frac{a_1}{2\kappa}}{2}\Big| \kappa x+\frac{x}{|x|} \Big|^2\\
&\le \frac{1+\frac{2\kappa}{a_1}}{2}|v|^2+\Big(1+\frac{a_1}{2\kappa}\Big) \kappa^2| x|^2 + 1+\frac{a_1}{2\kappa}\\
&\le \frac{1+\frac{2\kappa}{a_1}}{2}|v|^2+\Big(1+\frac{a_1}{2\kappa}\Big) \kappa^2| x|^{q_0} +C,
\end{align*}
for some positive constant $C$ dependent on $\kappa$ and $a_1$. We combine the above estimate with \eqref{ineq:d.Phi(t)} and the choice of $\kappa$ in \eqref{cond:kappa:2} to deduce that
\begin{align}
&\frac{\d \exp\big\{\Phi(t)+\|\eta(t)\|^{p_2}_{p_2}\big\} }{\exp \big\{\Phi(t)+\|\eta(t)\|^{p_2}_{p_2}\big\} } \nt \\
&= \d \big[\Phi(t)+\|\eta(t)\|^{p_2}_{p_2}\big] +\frac{1}{2}\Big| v(t)+\kappa x(t)+\frac{x(t)}{|x(t)|} \Big|^2\d t \nt \\
&\le -\Big(\frac{1}{2}-\kappa-\varepsilon-\frac{\kappa}{a_1}\Big)|v(t)|^2\d t-\Big(\frac{\kappa a_1}{2}-\varepsilon - \kappa^2   \Big)|x(t)|^{q_0}\d t \nt \\
&\qquad-(a_4-\varepsilon)\frac{1}{|x(t)|^{\beta_1}}\d t -(\delta-\varepsilon)\|\eta(t)\|^{p_2}_{p_2}\d t+C \d t+\d M(t) \nt \\
&\le -c\big[ \Phi(t)+\|\eta(t)\|^{p_2}_{p_2}\big]\d t +C\d t+\d M(t)   . \label{ineq:d.exp(Phi+eta)}
\end{align}
%
%
%
%
%
In particular, we also have
\begin{align*}
    \L \exp\big\{\Phi(x,v)+\|\eta\|^{p_2}_{p_2}\big\} \le \exp\big\{\Phi(x,v)+\|\eta\|^{p_2}_{p_2}\big\} \Big[-c \exp\big\{\Phi(x,v)+\|\eta\|^{p_2}_{p_2}\big\}+C\Big].
\end{align*}
Since
\begin{align*}
    \E \exp\big\{\Phi(t)+\|\eta(t)\|^{p_2}_{p_2}\big\} - \E \exp\big\{\Phi(0)+\|\eta_0\|^{p_2}_{p_2}\big\} = \int_0^t \E\Big[ \L \exp\big\{\Phi(r)+\|\eta(r)\|^{p_2}_{p_2}\big\}\Big]\d r,
\end{align*}
we obtain
\begin{align*}
&\frac{\d}{\d t} \E \exp\big\{\Phi(t)+\|\eta(t)\|^{p_2}_{p_2}\big\} \\
&\le \E\Big[ \L \exp\big\{\Phi(t)+\|\eta(t)\|^{p_2}_{p_2}\big\}\Big] \\
&\le \E \Big[  \exp\big\{\Phi(t)+\|\eta(t)\|^{p_2}_{p_2}\big\}\cdot  \big(-c \big[ \Phi(t)+\|\eta(t)\|^{p_2}_{p_2}\big]+C\big)\Big].
\end{align*}
We note that the following holds:
\begin{align*}
e^{r}(-cr+C)\le -\tilde{ c} e^{r}+\tilde{C},\quad r\ge 0, 
\end{align*}
for some positive constants $\tilde{ c}$ and $\tilde{ C}$ that may depend on $c$ and $C$. In turn, this implies that
\begin{align*}
\ddt \E \exp\big\{\Phi(t)+\|\eta(t)\|^{p_2}_{p_2}\big\}  \le -c\E\exp\big\{\Phi(t)+\|\eta(t)\|^{p_2}_{p_2}\big\}  +C.
\end{align*}
By Gr\"{o}nwall's inequality, the above immediately produces the estimate \eqref{ineq:E.e^Phi(t)<C.e^(-ct).e^Phi(0)+C}, which completes the proof of part~1.

2. With regard to \eqref{ineq:int.e^(-alpha.t+G)/|x|}, letting $\alpha\ge 0$ be given and chosen later, we apply It\^o's formula to $\exp\big\{-\alpha t + \Phi(t)+\|\eta(t)\|^{p_2}_{p_2}\big\}$ and obtain the identity
\begin{align*}
&\frac{\d \exp\big\{-\alpha t +\Phi(t)+\|\eta(t)\|^{p_2}_{p_2}\big\} }{\exp \big\{-\alpha t +\Phi(t)+\|\eta(t)\|^{p_2}_{p_2}\big\} }\\
& = -\alpha \d t +\d \big[\Phi(t)+\|\eta(t)\|^{p_2}_{p_2}\big] +\frac{1}{2}\la \d \big[\Phi(t)+\|\eta(t)\|^{p_2}_{p_2}\big],\d \big[\Phi(t)+\|\eta(t)\|^{p_2}_{p_2}\big]\big\ra\\
&= -\alpha \d t +\d \big[\Phi(t)+\|\eta(t)\|^{p_2}_{p_2}\big] +\frac{1}{2}\Big| v(t)+\kappa x(t)+\frac{x(t)}{|x(t)|} \Big|^2\d t,
\end{align*}
where the last equality follows from the expression \eqref{form:M(t):quadratic-variation}. From \eqref{cond:kappa:1} and \eqref{ineq:d.exp(Phi+eta)}, we have
\begin{align*}
&\frac{\d \exp\big\{-\alpha t +\Phi(t)+\|\eta(t)\|^{p_2}_{p_2}\big\} }{\exp \big\{-\alpha t +\Phi(t)+\|\eta(t)\|^{p_2}_{p_2}\big\} } \nt \\
&= -\alpha \d t +\d \big[\Phi(t)+\|\eta(t)\|^{p_2}_{p_2}\big] +\frac{1}{2}\Big| v(t)+\kappa x(t)+\frac{x(t)}{|x(t)|} \Big|^2\d t \nt \\
&\le -\alpha \d t + C \d t-(a_4-\varepsilon)\frac{1}{|x(t)|^{\beta_1}}\d t+\d M(t). 
\end{align*}
As a consequence, we may take $\alpha$ large enough to infer
\begin{align*}
\frac{\d \exp\big\{-\alpha t +\Phi(t)+\|\eta(t)\|^{p_2}_{p_2}\big\} }{\exp \big\{-\alpha t +\Phi(t)+\|\eta(t)\|^{p_2}_{p_2}\big\} }\le-(a_4-\varepsilon)\frac{1}{|x(t)|^{\beta_1}}\d t+\d M(t).
\end{align*}
In particular, we obtain the bound in expectation
\begin{align}
&\frac{\d}{\d t} \E \exp\big\{-\alpha t +\Phi(t)+\|\eta(t)\|^{p_2}_{p_2}\big\} \nt \\
&\le -(a_4-\varepsilon)\E \Big[\frac{\exp\big\{-\alpha t +\Phi(t)+\|\eta(t)\|^{p_2}_{p_2}\big\}}{|x(t)|^{\beta_1} }\Big]. \label{ineq:d/dt.E.exp(-alpha.t+Phi+eta)}
\end{align}
By integrating \eqref{ineq:d/dt.E.exp(-alpha.t+Phi+eta)} with respect to time, it follows immediately that
\begin{align*}
\int_0^t(a_4-\varepsilon) \E \Big[\frac{\exp\big\{-\alpha r +\Phi(r)+\|\eta(r)\|^{p_2}_{p_2}\big\}}{|x(r)|^{\beta_1} }\Big]\d r \le \exp\big\{\Phi(x_0,v_0)+\|\eta_0\|^{p_2}_{p_2}\big\}.
\end{align*}
Since $\Phi$ as in \eqref{form:Phi} satisfies $\Phi(x,v)\ge G(x)$, we arrive at the estimate
\begin{align*}
\int_0^t (a_4-\varepsilon) \E \left[\frac{\exp\big\{-\alpha r+G(x(r))\big\}}{|x(r)|^{\beta_1} }\right]\d r \le \exp\big\{\Phi(x_0,v_0)+\|\eta_0\|^{p_2}_{p_2}\big\}.
\end{align*}
Sending $t$ to infinity, this establishes \eqref{ineq:int.e^(-alpha.t+G)/|x|} by virtue of the Monotone Convergence Theorem. The proof is thus finished.

\end{proof}

\section{Geometric ergodicity} \label{sec:ergodicity}
In this section, we establish the main results of the paper by proving the existence and uniqueness of the invariant probability measure $\nu$, as well as establishing the exponential mixing rate toward $\nu$. As mentioned in the Introduction, we draw upon the framework of \cite{butkovsky2020generalized, hairer2011asymptotic} tailored to our setting. In Section \ref{sec:ergodicity:asymptotic-strong-feller}, we establish the asymptotic strong Feller property, which will be employed to prove the contracting property of the semigroup $P_t$. In Section \ref{sec:ergodicity:irreducibility}, we derive the irreducibility property, which is a control problem proving that the solutions can always be driven back to the center of the phase space. In turn, this will be invoked to deduce the smallness property of any bounded sets in $\Xcal_{p_2}$. In Section \ref{sec:ergodicity:proof-of-theorem}, we will establish Theorem \ref{thm:geometric-ergodicity} by using the auxiliary results collected in Section \ref{sec:moment}, Section \ref{sec:ergodicity:asymptotic-strong-feller} and Section \ref{sec:ergodicity:irreducibility}.

\subsection{Asymptotic strong Feller property} \label{sec:ergodicity:asymptotic-strong-feller}

For any unit element $\xi\in\Xcal_{p_2}$, we denote by $J_{0,t}\xi$ the derivative of the solution $(x(t),v(t),\eta(t))$ with respect to the initial condition $\xi$. Recalling the projection $\pi$ defined in \eqref{form:pi}, observe that $J_{0,t}\xi=(\pi_x J_{0,t}\xi, \pi_v J_{0,t}\xi,\pi_\eta J_{0,t}\xi)$ satisfies the following equation with random coefficients:
\begin{align}
\tfrac{\d}{\d t} \pi_x J_{0,t}\xi&= \pi_v J_{0,t}\xi, \nt\\
\tfrac{\d}{\d t} \pi_v J_{0,t}\xi&= -\pi_v J_{0,t} \xi-\grad^2 U(x(t))\pi_x J_{0,t} \xi -\grad^2 G(x(t))\pi_x J_{0,t} \xi \nt\\
&\qquad\qquad-\int_0^\infty\close \grad H(x(t)-\eta(t;s))(\pi_x J _{0,t}\xi-\pi_\eta J_{0,t} \xi(s))K(s)\d s,\label{eqn:droplet:J}\\
\ddt \pi_\eta J_{0,t} \xi&=-\partial_s\pi_\eta J_{0,t} \xi,\quad \pi_\eta J_{0,t} \xi(0)=\pi_x J_{0,t} \xi,\, t>0, \nt 
\end{align}
with the initial condition
\begin{align*}
 J_{0,0} \xi=\xi.
\end{align*}

Next, for any adapted process $\zeta\in L^2(0,t;\rbb^d)$, we introduce the Malliavin derivative $A_{0,t} \zeta$ of $(x(t),v(t),\eta(t))$ with respect to $W$ along the path $\zeta$. Observe that $A_{0,t} \zeta$ obeys the equation
\begin{align}
\tfrac{\d}{\d t} \pi_xA_{0,t} \zeta&= \pi_vA _{0,t}\zeta,\nt \\
\tfrac{\d}{\d t} \pi_v A_{0,t} \zeta&= -\pi_v A_{0,t} \zeta-\grad^2 U(x(t))\pi_x A_{0,t} \zeta-\grad^2 G(x(t))\pi_x A_{0,t} \zeta+\zeta(t)\nt \\
&\qquad\qquad-\int_0^\infty\close \grad H(x(t)-\eta(t;s))(\pi_x A _{0,t}\zeta-\pi_\eta A_{0,t} \zeta(s))K(s)\d s,\label{eqn:droplet:A}\\
\ddt \pi_\eta A_{0,t} \zeta&=-\partial_s\pi_\eta A_{0,t} \zeta,\quad \pi_\eta A_{0,t} \zeta(0)=\pi_x A_{0,t} \zeta,\, t>0, \nt
\end{align}
with the initial condition
\begin{align*}
A _{0,0}\zeta=0.
\end{align*}
\begin{remark} \label{rem:Malliavin}
  We recall that when $\zeta$ is an adapted process, the Malliavin derivative $A_{0,t}\zeta$ satisfies the identity
\begin{align} \label{eqn:Malliavin-int-by-part}
    \E\la D f(x(t),v(t),\eta(t)),A_{0,t}\zeta\ra_{\Xcal_{p_2}}= \E\left[ f(x(t),v(t),\eta(t))\int_0^t\la \zeta(r),\d W(r)\ra\right],
\end{align}
for every $f\in C_b^1(\Xcal_{p_2})$. We note that the above equation is the so-called Malliavin integration by parts where the stochastic integral on the right-hand side is understood in the usual It\^o sense. In case $\zeta$ is non-adapted, then one interprets this as a Skorokhod integral. See \cite[Equation (3.2)]{hairer2011theory} and \cite[Chapter 1.3]{nualart2013malliavin} for further discussions of this point. Later in the proof of Lemma \ref{lem:droplet:asymptotic-Feller}, we will carefully pick $\zeta$ to prove that $P_t$ satisfies the asymptotic Feller property.
\end{remark}

Now, denoting
\begin{align} \label{form:rho_t}
\rho_t =J_{0,t} \xi-A_{0,t} \zeta,
\end{align}
we observe that $\rho_t$ solves 
\begin{align}
\tfrac{\d}{\d t} \pi_x \rho_t &= \pi_v \rho_t ,\qquad\qquad \rho_0 =\xi, \nt \\
\tfrac{\d}{\d t} \pi_v \rho_t &= -\pi_v \rho_t -\grad^2 U(x(t))\pi_x \rho_t -\grad^2 G(x(t))\pi_x \rho_t - \zeta(t) \nt \\
&\qquad\qquad-\int_0^\infty\close \grad H(x(t)-\eta(t;s))(\pi_x\rho_t -\pi_\eta\rho_t (s) )K(s)\d s,\label{eqn:droplet:rho}\\
\ddt \pi_\eta \rho_t &=-\partial_s\pi_\eta \rho_t ,\quad \pi_\eta \rho_t (0)=\pi_x \rho_t ,\, t>0. \nt 
\end{align}
Having introduced the needed processes, we now state and prove the required asymptotic strong Feller property in Lemma~\ref{lem:droplet:asymptotic-Feller}.

\begin{lemma} \label{lem:droplet:asymptotic-Feller}There exist positive constants $c$ and $C$ such that for all $(x,v,\eta)\in\Xcal_{p_2}$ and $f\in C^1_b(\Xcal_{p_2})$, $C^1_b(\Xcal_{p_2})$ being the space of bounded differentiable functions on $\Xcal_{p_2}$, the following holds:
\begin{align} \label{ineq:droplet:asymptotic-strong-Feller}
&\|DP_t f(x,v,\eta)\|_{\Xcal_{p_2}^*}   \nt \\
&\le Ce^{-ct}\sqrt{P_t \|Df\|^2_{\Xcal_{p_2}^*}(x,v,\eta)}+ C e^{\frac{1}{2}\Psi(x,v,\eta)}\sqrt{P_t|f|^2(x,v,\eta)},\quad t\ge 0,
\end{align}
where $\Psi(x,v,\eta)$ is as in~\eqref{form:Psi}.
\end{lemma}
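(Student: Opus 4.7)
The plan is to apply the Bismut--Hairer--Mattingly integration-by-parts identity from Malliavin calculus. For every unit direction $\xi\in\Xcal_{p_2}$ and every adapted control $\zeta\in L^2(\Omega;L^2(0,t;\rbb^d))$, the Jacobian $J_{0,t}\xi$ and the Malliavin derivative $A_{0,t}\zeta$ from \eqref{eqn:droplet:J}--\eqref{eqn:droplet:A}, together with the mismatch $\rho_t = J_{0,t}\xi - A_{0,t}\zeta$ in \eqref{form:rho_t}, yield the Bismut identity
\begin{align*}
\la DP_t f(X_0),\xi\ra_{\Xcal_{p_2}} = \E\big[\la Df(X(t)),\rho_t\ra\big] + \E\Big[f(X(t))\int_0^t \la \zeta(s),\d W(s)\ra\Big].
\end{align*}
Applying Cauchy--Schwarz and the It\^o isometry yields
\begin{align*}
|\la DP_t f(X_0),\xi\ra| \le \sqrt{\E\|\rho_t\|_{\Xcal_{p_2}}^2}\sqrt{P_t\|Df\|_{\Xcal_{p_2}^*}^2(X_0)} + \sqrt{\E\int_0^t |\zeta(s)|^2\d s}\,\sqrt{P_t|f|^2(X_0)},
\end{align*}
so that establishing \eqref{ineq:droplet:asymptotic-strong-Feller} reduces to producing a $\zeta$ for which (i) $\E\|\rho_t\|_{\Xcal_{p_2}}^2\le Ce^{-2ct}\|\xi\|_{\Xcal_{p_2}}^2$ and (ii) $\E\int_0^t|\zeta(s)|^2\d s\le Ce^{\Psi(X_0)}\|\xi\|_{\Xcal_{p_2}}^2$, both uniform in $X_0\in\Xcal_{p_2}$ and $t\ge 0$.

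The control is chosen so as to cancel the destabilizing terms in the $\pi_v\rho_t$-equation of \eqref{eqn:droplet:rho}, namely
\begin{align*}
\zeta(s) = -\grad^2 U(x(s))\pi_x\rho_s - \grad^2 G(x(s))\pi_x\rho_s - \int_0^\infty\grad H\big(x(s)-\eta(s;r)\big)\big(\pi_x\rho_s - \pi_\eta\rho_s(r)\big)K(r)\,\d r + \lambda\,\pi_x\rho_s,
\end{align*}
for a constant $\lambda>0$ to be tuned. With this choice, the $(\pi_x\rho_t,\pi_v\rho_t)$-dynamics collapse to the autonomous linear system $\dot a = b$, $\dot b = -b - \lambda a$, whose spectrum lies strictly in the left half-plane, producing the deterministic decay $|\pi_x\rho_s|^2+|\pi_v\rho_s|^2\le Ce^{-2c_0 s}\|\xi\|_{\Xcal_{p_2}}^2$ with $c_0=c_0(\lambda)$ as large as desired. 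The memory coordinate $\pi_\eta\rho_t$ solves a pure transport equation with boundary value $\pi_x\rho_t$, so propagating this decay through the method of characteristics, together with the pointwise bound $K(u+t)\le K(u)e^{-\delta t}$ following from Assumption \ref{cond:K}, yields $\|\pi_\eta\rho_t\|_{p_2}\le C\|\xi\|_{\Xcal_{p_2}}e^{-ct}$ for some $c>0$ independent of $X_0$. This proves (i) in the deterministic sense.

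For (ii), combining the growth hypothesis \eqref{cond:U(x)<x^q-2} on $\grad^2 U$, condition \eqref{cond:H:1} on $\grad H$, and the crucial condition \eqref{cond:G:e^G>grad^2.G} on $|\grad^2 G|^2$ gives
\begin{align*}
|\zeta(s)|^2 \le \Theta(x(s),\eta(s))\big(|\pi_x\rho_s|^2 + \|\pi_\eta\rho_s\|_{p_2}^2\big) + \frac{C}{a_6}\Big(1+\frac{e^{G(x(s))}}{|x(s)|^{\beta_1}}\Big)|\pi_x\rho_s|^2,
\end{align*}
where $\Theta$ is a polynomial in $|x|$ and $\|\eta\|_{p_2}$. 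Combined with the exponential decay established in step (i), the polynomial contribution is absorbed by the exponential moment bound \eqref{ineq:E.e^Phi(t)<C.e^(-ct).e^Phi(0)+C}, which dominates any polynomial in $|x|$ and $\|\eta\|_{p_2}$ by a constant multiple of $e^{\Psi(X_0)}$. The singular contribution $e^{G(x(s))}/|x(s)|^{\beta_1}$ is exactly the quantity for which the Lyapunov estimate \eqref{ineq:int.e^(-alpha.t+G)/|x|} of Lemma \ref{lem:moment-bound} furnishes an $e^{-\alpha s}$-weighted bound by $C_\alpha e^{\Psi(X_0)}$, valid for all sufficiently large $\alpha$; choosing $\lambda$ large enough so that $2c_0(\lambda)$ exceeds this required $\alpha$ delivers (ii).

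The principal obstacle is the cost bound (ii). Because the feedback $\zeta$ inherits the full singularity of $\grad^2 G(x)$ at the origin, it is only through the interplay between \eqref{cond:G:e^G>grad^2.G} (which replaces $|\grad^2 G|^2$ by the exponential weight $1+e^G/|x|^{\beta_1}$) and the singular Lyapunov estimate \eqref{ineq:int.e^(-alpha.t+G)/|x|} that one can convert this singular coefficient into an $e^{\Psi(X_0)}$-integrable quantity along the trajectory. Once this singular bound is in place, everything else is a rather standard Bismut--Hairer--Mattingly reduction, and the $e^{\frac{1}{2}\Psi(x,v,\eta)}$ prefactor appearing in \eqref{ineq:droplet:asymptotic-strong-Feller} is precisely the square root of the exponential weight produced by Lemma \ref{lem:moment-bound}.
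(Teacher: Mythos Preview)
Your overall strategy is correct and matches the paper's approach closely: Bismut--Hairer--Mattingly integration by parts, a feedback control that cancels the nonlinear terms in the $\rho$-equation, deterministic exponential decay of $\rho_t$, and a cost bound that handles the singular $\grad^2 G$ contribution via condition \eqref{cond:G:e^G>grad^2.G} and the weighted estimate \eqref{ineq:int.e^(-alpha.t+G)/|x|}.

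There is, however, a concrete gap in your choice of control. With your $\zeta$, the reduced $(a,b)=(\pi_x\rho,\pi_v\rho)$ system is $\dot a=b$, $\dot b=-b-\lambda a$, whose characteristic roots are $(-1\pm\sqrt{1-4\lambda})/2$. For $\lambda>\tfrac14$ the eigenvalues are complex with real part exactly $-\tfrac12$, and for $\lambda\le\tfrac14$ the slower root lies in $(-\tfrac12,0)$. Hence $c_0(\lambda)\le\tfrac12$ for \emph{every} $\lambda>0$; it is not true that ``$c_0(\lambda)$ is as large as desired.'' This matters because Lemma~\ref{lem:moment-bound} part~2 only furnishes the bound \eqref{ineq:int.e^(-alpha.t+G)/|x|} for $\alpha$ \emph{sufficiently large}, and you need $2c_0(\lambda)\ge\alpha$ to pair the $e^{-2c_0 s}$ decay of $|\pi_x\rho_s|^2$ against the $e^{\alpha s}$ weight in that integral. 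With $2c_0\le 1$ this cannot be guaranteed.

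The paper fixes this by also injecting velocity damping into the control: it takes $\zeta$ so that the reduced system becomes $\dot a=b$, $\dot b=-5\alpha b-6\alpha^2 a$, with eigenvalues $-2\alpha,-3\alpha$ and hence decay rate $2\alpha$, which can be tuned to exceed the threshold in \eqref{ineq:int.e^(-alpha.t+G)/|x|}. Your argument goes through once you add a term $\mu\,\pi_v\rho_s$ (with $\mu$ large) to your control; the extra $|\pi_v\rho_s|^2$ contribution to $|\zeta|^2$ is harmless since it decays deterministically.
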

\begin{proof}
Let $(x(t),v(t),\eta(t))$ be the solution of \eqref{eqn:droplet} with initial condition $(x,v,\eta)\in \Xcal_{p_2}$. For $f\in C^1_b(\Xcal_{p_2})$, a unit element $\xi\in \Xcal_{p_2}$ and a path $\zeta\in L^2(0,t;\rbb^d)$, from the relation \eqref{form:rho_t}, we have the following chain of implications:
\begin{align*}
\la DP_tf(x,v,\eta),\xi\ra_{\Xcal_{p_2}} &=\E\la D f(x(t),v(t),\eta(t)),J_{0,t}\xi\ra_{\Xcal_{p_2}}\\
&=\E\la D f(x(t),v(t),\eta(t)),\rho_t\ra_{\Xcal_{p_2}}+ \E\la D f(x(t),v(t),\eta(t)),A_{0,t}\zeta\ra_{\Xcal_{p_2}}\\
&=\E\la D f(x(t),v(t),\eta(t)),\rho_t\ra_{\Xcal_{p_2}}+\E\left[ f(x(t),v(t),\eta(t))\int_0^t\la \zeta(r),\d W(r)\ra\right],
\end{align*}
where the last implication follows from identity \eqref{eqn:Malliavin-int-by-part}. In order to establish \eqref{ineq:droplet:asymptotic-strong-Feller}, it suffices to determine a suitable path $\zeta$ such that
\begin{align}  \label{ineq:droplet:asymptotic-strong-Feller:a}
\E\la D f(x(t),v(t),\eta(t)),\rho_t\ra_{\Xcal_{p_2}} \le Ce^{-ct}\sqrt{P_t\|Df\|^2_{\Xcal_{p_2}^*}(x,v,\eta)},
\end{align}
and that
\begin{align} \label{ineq:droplet:asymptotic-strong-Feller:b}
\E\left[ f(x(t),v(t),\eta(t))\int_0^t\la \zeta(r),\d W(r)\ra\right] \le C e^{\frac{1}{2}[\Phi(x,v)+\|\eta\|^{p_2}_{p_2}]}\sqrt{P_t|f|^2(x,v,\eta)},
\end{align}
for some positive constants $C,c$ independent of $(x,v,\eta)$, $t$, $f$ and $\xi$. To this end, we let $\alpha>0$ be an arbitrary positive constant. In view of \eqref{eqn:droplet:rho}, we choose the control path $\zeta$ as follows:
\begin{align} \label{form:zeta}
\zeta(t)& = -\pi_v\rho_t+5\alpha \pi_v\rho_t +6\alpha^2 \pi_x\rho_t-\grad^2 U(x(t))\pi_x \rho_t -\grad^2 G(x(t))\pi_x \rho_t  \nt \\
&\qquad\qquad-\int_0^\infty\close \grad H(x(t)-\eta(t;s))(\pi_x\rho_t -\pi_\eta\rho_t (s) )K(s)\d s.
\end{align}
With this choice of $\zeta$, observe that \eqref{eqn:droplet:rho} is reduced to 
\begin{align}
\tfrac{\d}{\d t} \pi_x \rho_t &= \pi_v \rho_t ,\qquad\qquad \rho_0 =\xi, \nt \\
\tfrac{\d}{\d t} \pi_v \rho_t &= -5\alpha \pi_v \rho_t-6\alpha^2 \pi_x\rho_t ,\label{eqn:droplet:rho:simplified}\\
\ddt \pi_\eta \rho_t &=-\partial_s\pi_\eta \rho_t ,\quad \pi_\eta \rho_t (0)=\pi_x \rho_t ,\, t>0. \nt 
\end{align}
On the one hand, by a routine calculation, the first two equations of \eqref{eqn:droplet:rho:simplified} yield the solution
\begin{align*}
\begin{pmatrix}
\pi_x \rho_t\\ \pi_v\rho_t
\end{pmatrix} = \left(3\pi_x \xi +\frac{1}{\alpha}\pi_v\xi\right) e^{-2\alpha t}\begin{pmatrix}1\\-2\alpha
\end{pmatrix} - \left(2\pi_x \xi +\frac{1}{\alpha}\pi_v\xi\right) e^{-3\alpha t}\begin{pmatrix}1\\-3\alpha
\end{pmatrix}.
\end{align*}
In particular, this implies the decay rate (recalling  $\xi$ is a unit element in $\Xcal_{p_2}$)
\begin{align} \label{ineq:|pi_x.rho_t|+|pi_v.rho_t|}
|\pi_x \rho_t|+ |\pi_v \rho_t| \le C e^{-2\alpha t} \big(|\pi_x \xi|+ |\pi_v \xi|\big)\le C\, e^{-2\alpha t},\quad t\ge 0,
\end{align}
for some positive constant $C$ independent of $t$ and $\xi$. On the other hand, concerning the last equation of \eqref{eqn:droplet:rho:simplified}, we employ an argument similar to \eqref{ineq:L.|eta|_(p_2)^(p_2)} and obtain
\begin{align*}
\ddt \|\pi_\eta\rho_t\|^{p_2}_{p_2}\le |\pi_x\rho_t|^{p_2}K(0)-\delta \|\pi_\eta\rho_t\|^{p_2}_{p_2},
\end{align*}
whence
\begin{align*}
\|\pi_\eta\rho_t\|^{p_2}_{p_2} & \le e^{-\delta t}\|\pi_\eta \xi\|^{p_2}_{p_2}+K(0)\int_0^t e^{-\delta (t-r)}|\pi_x\rho_r|^{p_2}\d r.
\end{align*}
Together with \eqref{ineq:|pi_x.rho_t|+|pi_v.rho_t|}, we infer the existence of positive constants $C$ and $c$ such that 
\begin{align} \label{ineq:|pi_eta.rho_t|}
\|\pi_\eta\rho_t\|^{p_2}_{p_2}  \le Ce^{-ct}\big(|\pi_x \xi|+ |\pi_v \xi|+\|\pi_\eta \xi\|_{p_2}\big)^{p_2}=Ce^{-ct}.
\end{align}
Turning back to \eqref{ineq:droplet:asymptotic-strong-Feller:a}, by the Cauchy-Schwarz inequality, we have
\begin{align*}
\E\la D f(x(t),v(t),\eta(t)),\rho_t\ra_{\Xcal_{p_2}} & \le \sqrt{\E \|D f(x(t),v(t),\eta(t))\|^2_{\Xcal_{p_2}^*} }\cdot \sqrt{\E\|\rho_t\|^2_{\Xcal_{p_2}}}\\
&\le \sqrt{P_t \|Df\|^2_{\Xcal_{p_2}^*}(x,v,\eta)}\cdot Ce^{-ct} ,
\end{align*}
where, in the last implication, we invoked \eqref{ineq:|pi_x.rho_t|+|pi_v.rho_t|} and \eqref{ineq:|pi_eta.rho_t|}. This establishes \eqref{ineq:droplet:asymptotic-strong-Feller:a}.

With regard to \eqref{ineq:droplet:asymptotic-strong-Feller:b}, we employ the Cauchy-Schwarz inequality with It\^o's isometry to see that
\begin{align} \label{ineq:droplet:asymptotic-strong-Feller:c}
\E\left[ f(x(t),v(t),\eta(t))\int_0^t\la \zeta(r),\d W(r)\ra\right] & \le \sqrt{\E|f(x(t),v(t),\eta(t))|^2}\cdot\sqrt{\E\int_0^\infty\close |\zeta(r)|^2\d r}.
\end{align}
To estimate the integral involving $\zeta$ on the above right-hand side, we recall the choice of $\zeta$ in~\eqref{form:zeta} and observe that
\begin{align}
|\zeta(t)|^2 &\le C\left[ |\pi_v\rho_t|^2+|\pi_x\rho_t|^2+\big(|\grad^2 U(x(t))|^2+|\grad^2 G(x(t))|^2\big)|\pi_x\rho_t|^2\right] \notag\\
&\qquad+C\Big|\int_0^\infty\close \grad H(x(t)-\eta(t;s))(\pi_x\rho_t-\pi_\eta\rho_t (s) )K(s)\d s\Big|^2.\label{ineq:droplet:asymptotic-strong-Feller:zeta} 
\end{align}
Concerning the term involving $\grad^2 U$ on the right-hand side of \eqref{ineq:droplet:asymptotic-strong-Feller:zeta}, from condition \eqref{cond:U(x)<x^q}, condition \eqref{cond:U(x)<x^q-2} and the choice of the function $\Phi$ in \eqref{form:Phi}, 
\begin{align*}
|\grad^2 U(x)|^2\le C (|U(x)|^2+1) \le Ce^{\Phi(x,v)}.
\end{align*}
Together with \eqref{ineq:E.e^Phi(t)<C.e^(-ct).e^Phi(0)+C} and \eqref{ineq:|pi_x.rho_t|+|pi_v.rho_t|}, we see that
\begin{align}
\E\int_0^\infty\close |\grad^2 U(x(r))|^2\cdot|\pi_x\rho_r |^2\d r 
&\le C\int_0^\infty\close e^{-2\alpha r}\, \E e^{\Phi(r)}  \d r \le C e^{\Phi(x,v)+\|\eta\|^{p_2}_{p_2}}.\label{ineq:droplet:asymptotic-strong-Feller:|grad.U|^2}
\end{align}
To estimate the term involving $\grad^2 G$ on the right-hand side of \eqref{ineq:droplet:asymptotic-strong-Feller:zeta}, we combine Eqs.~\eqref{cond:G:e^G>grad^2.G},~\eqref{ineq:int.e^(-alpha.t+G)/|x|} and \eqref{ineq:|pi_x.rho_t|+|pi_v.rho_t|} to deduce that 
\begin{align}
&\E\int_0^\infty\close |\grad^2 G(x(r))|^2\cdot|\pi_x\rho_r |^2\d r \nt \\
 & \le C\int_0^\infty\close  e^{-2\alpha r}\, \E \left[\frac{e^{G(x(r))}}{|x(r)|^{\beta_1}}\right] \d r+ C\int_0^\infty\close  e^{-2\alpha r}\d r \le C e^{\Phi(x,v)+\|\eta\|^{p_2}_{p_2}}.\label{ineq:droplet:asymptotic-strong-Feller:|grad.G|^2}
\end{align}
Next, considering the integral on the right-hand side of~\eqref{ineq:droplet:asymptotic-strong-Feller:zeta}, we employ condition~\eqref{cond:H:1} to see that
\begin{align}
&\Big|\int_0^\infty\close \grad H(x(t)-\eta(t;s))(\pi_x\rho_t -\pi_\eta\rho_t (s) )K(s)\d s\Big|^2  \nt \\
&\le C\Big|\int_0^\infty\close\big(|x(t)|^{p_1}+|\eta(t;s)|^{p_1}+1\big)\cdot \big( |\pi_x\rho_t |+|\pi_\eta\rho_t (s)|\big) K(s)\d s\Big|^2 \nt \\
&\le C\Big(|x(t)|^{2p_1}|\pi_x\rho_t|^2+|\pi_x\rho_t|^2\Big|\int_0^\infty\close|\eta(t;s)|^{p_1}K(s)\d s\Big|^2 +|\pi_x\rho_t|^2 \nt  \\
&\qquad\qquad + \big(|x(t)|^{2p_1}+1\big)\Big|\int_0^\infty\close |\pi_\eta\rho_t(s)|K(s)\d s\Big|^2+ \Big|\int_0^\infty\close |\eta(t;s)|^{p_1}|\pi_\eta\rho_t(s)|K(s)\d s\Big|^2  \Big). \label{ineq:droplet:asymptotic-strong-Feller:H}
\end{align}
Recalling the choice of $p_2$ as in \eqref{cond:p_2}, 
we have
\begin{align*}
\Big|\int_0^\infty\close|\eta(t;s)|^{p_1}K(s)\d s\Big|^2 & \le \int_0^\infty\close |\eta(t;s)|^{2p_1}K(s)\d s \int_0^\infty\close K(s)\d s \le C\|\eta(t)\|^{p_2}_{p_2}+C,
\end{align*}
and that
\begin{align*}
\Big|\int_0^\infty\close |\pi_\eta\rho_t(s)|K(s)\d s\Big|^2  & \le \Big[\int_0^\infty\close |\pi_\eta\rho_t(s)|^{p_2}K(s)\d s\Big]^\frac{2}{p_2} \cdot \Big[\int_0^\infty\close K(s)\d s\Big]^\frac{2}{p_2^*} = C\|\pi_\eta\rho_t\|^{2}_{p_2},
\end{align*}
where $p_2^*$ denotes the conjugate of $p_2$, i.e., $\frac{1}{p_2}+\frac{1}{p_2^*}=1$. Also,
\begin{align*}
\Big|\int_0^\infty\close |\eta(t;s)|^{p_1}|\pi_\eta\rho_t(s)|K(s)\d s\Big|^2 & \le \Big|\int_0^\infty\close |\eta(t;s)|^{p_1 p_2^*}K(s)\d s\Big|^{\frac{2}{p_2^*}}\, \Big| \int_0^\infty \close |\pi_\eta\rho_t(s)|^{p_2}K(s)\d s\Big|^{\frac{2}{p_2}}.
\end{align*}
Since $p_2>p_1+1$, note that $p_1p_2^* <p_2$. As a consequence,
\begin{align*}
&\Big|\int_0^\infty\close |\eta(t;s)|^{p_1 p_2^*}K(s)\d s\Big|^{\frac{2}{p_2^*}} \, \Big|\int_0^\infty \close |\pi_\eta\rho_t(s)|^{p_2}K(s)\d s\Big|^{\frac{2}{p_2}} \\
&\le C \|\eta(t)\|^{\frac{2p_2}{p_2^*}}_{p_2} \|\pi_\eta\rho_t\|^2_{p_2}+C\|\pi_\eta\rho_t\|^2_{p_2}. 
\end{align*}
Combining with \eqref{ineq:droplet:asymptotic-strong-Feller:H}, we deduce the bound
\begin{align*}
&\E \Big|\int_0^\infty\close \grad H(x(t)-\eta(t;s))(\pi_x\rho_t -\pi_\eta\rho_t (s) )K(s)\d s\Big|^2 \\
&\le C |\pi_x\rho_t|^2\E\big[ |x(t)|^{2p_1}+\|\eta(t)\|^{p_2}_{p_2}+1\big] + C\|\pi_\eta\rho_t\|^{2}_{p_2}\E\big[ |x(t)|^{2p_1}+\|\eta(t)\|^{\frac{2p_2}{p_2^*}}_{p_2} +1\big].
\end{align*}
Since $U(x)$ dominates $|x|^{q_0}$, cf. \eqref{cond:U(x)<x^q}, we have
\begin{align*}
|x|^{2p_1}+\|\eta\|^{p_2}_{p_2}+\|\eta\|^{\frac{2p_2}{p_2^*}}_{p_2}+1\le C e^{\Phi(x,v)+\|\eta\|^{p_2}_{p_2}}.
\end{align*}
It follows that
\begin{align*}
&\E \Big|\int_0^\infty\close \grad H(x(t)-\eta(t;s))(\pi_x\rho_t -\pi_\eta\rho_t (s) )K(s)\d s\Big|^2 \\
&\le C \big(|\pi_x\rho_t|^2 +  \|\pi_\eta\rho_t\|^2_{p_2}\big)\E e^{\Phi(x(t),v(t))+\|\eta(t)\|^{p_2}_{p_2}}.
\end{align*}
In light of \eqref{ineq:E.e^Phi(t)<C.e^(-ct).e^Phi(0)+C}, we deduce further that
\begin{align*}
&\E \Big|\int_0^\infty\close \grad H(x(t)-\eta(t;s))(\pi_x\rho_t -\pi_\eta\rho_t (s) )K(s)\d s\Big|^2 \\
&\le C \big(|\pi_x\rho_t|^2 +  \|\pi_\eta\rho_t\|^2_{p_2}\big) e^{\Phi(x,v)+\|\eta\|^{p_2}_{p_2}},
\end{align*}
whence
\begin{align} \label{ineq:droplet:asymptotic-strong-Feller:H:a}
&\int_0^\infty\close \E \Big|\int_0^\infty\close \grad H(x(t)-\eta(t;s))(\pi_x\rho_t -\pi_\eta\rho_t (s) )K(s)\d s\Big|^2\d t \nt \\
&\le C e^{\Phi(x,v)+\|\eta\|^{p_2}_{p_2}}\int_0^\infty\big(|\pi_x\rho_t|^2 +  \|\pi_\eta\rho_t\|^2_{p_2}\big) \d t \le C e^{\Phi(x,v)+\|\eta\|^{p_2}_{p_2}},
\end{align}
where the last implication follows from \eqref{ineq:|pi_x.rho_t|+|pi_v.rho_t|} and \eqref{ineq:|pi_eta.rho_t|}.

Now, we collect~\eqref{ineq:droplet:asymptotic-strong-Feller:zeta}, \eqref{ineq:droplet:asymptotic-strong-Feller:|grad.U|^2}, \eqref{ineq:droplet:asymptotic-strong-Feller:|grad.G|^2} and \eqref{ineq:droplet:asymptotic-strong-Feller:H:a} while making use of~\eqref{ineq:|pi_x.rho_t|+|pi_v.rho_t|} again to arrive at the bound
\begin{align*}
\E\int_0^\infty\close |\zeta(t)|^2\d t &\le C (1+ e^{\Phi(x,v)+\|\eta\|^{p_2}_{p_2}})\le C e^{\Phi(x,v)+\|\eta\|^{p_2}_{p_2}}.
\end{align*}
This together with~\eqref{ineq:droplet:asymptotic-strong-Feller:c} implies that
\begin{align*} 
\E \Big[f(x(t),v(t),\eta(t))\int_0^t\la \zeta(r),\d W(r)\ra\Big]\le C e^{\frac{1}{2}[\Phi(x,v)+\|\eta\|^{p_2}_{p_2}]}\sqrt{P_t |f|^2(x,v,\eta)},
\end{align*}
which produces \eqref{ineq:droplet:asymptotic-strong-Feller:b}. The proof is thus finished.

\end{proof}

\subsection{Irreducibility} \label{sec:ergodicity:irreducibility}

For $R>0$, let $A_{R}$ be the bounded set in $\Xcal_{p_2}$ given by
\begin{align} \label{form:A_R}
A_R= \{(x,v,\eta)\in\Xcal_{p_2}: |x|+|x|^{-1}+|v|+\|\eta\|_{p_2}\le R\}.
\end{align}
Also, letting $e_1=(1,0,\dots,0)$ be a unit vector in $\rbb^d$, for $r\in (0,1)$ we denote by $B_r$ the disk centered at $(e_1,0,0)\in\Xcal_{p_2}$ with radius $r$, i.e.,
\begin{align} \label{form:B_r}
B_r= \{(x,v,\eta)\in\Xcal_{p_2}: |x-e_1|+|v|+\|\eta\|_{p_2}\le r\}.
\end{align}

In Proposition \ref{prop:irreducible} below, we assert that the probability of the solutions eventually entering $B_r$ is uniform with respect to initial data in $A_R$. In turn, this will be employed to establish the $\vr_R$-small property of $A_R$ in Section \ref{sec:ergodicity:proof-of-theorem}.

\begin{proposition} \label{prop:irreducible}

Given every initial condition $(x,v,\eta)\in\Xcal_{p_2}$, let $(x(t),v(t),\eta(t)$ be the solution of \eqref{eqn:droplet}. Then, for all positive constants $R>2,\,r\in (0,1)$, there exists $T=T(R,r)>0$ such that
\begin{align} \label{ineq:irreducible}
\inf_{(x,v,\eta)\in A_R} \P\big(  \big(x(T),v(T),\eta(T)\big) \in B_r \big)> 0.
\end{align}
where $A_R$ and $B_r$ are defined in \eqref{form:A_R} and \eqref{form:B_r}, respectively.

\end{proposition}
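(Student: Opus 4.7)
The plan is to prove Proposition \ref{prop:irreducible} by a control-theoretic support argument combined with Girsanov's theorem to handle both the singular drift and the infinite-dimensional memory variable. For each initial condition in $A_R$, I will exhibit a smooth deterministic target trajectory $(\hat x,\hat v,\hat\eta)$ steering the system into a neighborhood of $(e_1,0,0)$ at a time $T=T(R,r)$, together with a deterministic control $u^*(t)$ producing this trajectory, verify that $\int_0^T|u^*|^2\,\d t$ is bounded uniformly in $A_R$, and then use Girsanov together with a continuity estimate to convert a positive-probability event for the controlled system into the positive-probability event $\{X(T)\in B_r\}$ for the original one.

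Given $(x_0,v_0,\eta_0)\in A_R$, the constraint $|x_0|\ge 1/R$ from the definition of $A_R$ keeps the initial position safely away from the singularity. I would construct a $C^2$-smooth path $\hat x:[0,T]\to\R^d$ with $\hat x(0)=x_0$, $\dot{\hat x}(0)=v_0$, $\hat x(t)=e_1$ for all $t\in [T/2,T]$, and $|\hat x(t)|\ge 1/(2R)$ throughout; this is possible since $\R^d\setminus\{0\}$ is path-connected for $d\ge 2$, while for $d=1$ the interpolation is carried out in $(0,\infty)$. Setting $\hat v=\dot{\hat x}$ and $\hat\eta(t;s)=\hat x(t-s)$ for $s\le t$, $\hat\eta(t;s)=\eta_0(s-t)$ for $s>t$, the memory norm at time $T$ splits as
\begin{align*}
\|\hat\eta(T)-e_1\|_{p_2}^{p_2}=\int_{T/2}^T |\hat x(T-s)-e_1|^{p_2}K(s)\,\d s+\int_0^\infty |\eta_0(r)-e_1|^{p_2}K(r+T)\,\d r,
\end{align*}
where the second integral comes from the change of variables $r=s-T$. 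By Assumption \ref{cond:K}, $K(r+T)\le K(r)e^{-\delta T}$, so the second integral is at most $C e^{-\delta T}(R^{p_2}+1)$, while the first is bounded by $C(R)e^{-\delta T/2}$ using exponential decay of $K$ and the uniform boundedness of $\hat x$; thus $(\hat x(T),\hat v(T),\hat\eta(T))\in B_{r/2}$ provided $T=T(R,r)$ is chosen sufficiently large.

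The control driving the target is
\begin{align*}
u^*(t)=\dot{\hat v}(t)+\hat v(t)+\grad U(\hat x(t))+\grad G(\hat x(t))+\int_0^\infty H(\hat x(t)-\hat\eta(t;s))K(s)\,\d s.
\end{align*}
Combining \eqref{cond:U(x)<x^q-1}, \eqref{cond:G:grad.G(x)<1/|x|^beta} (together with $|\hat x|\ge 1/(2R)$), \eqref{cond:H:1}, and the fact that $\|\eta_0\|_{p_2}\le R$ controls $\int|\eta_0|^{p_1}K\,\d s$ by H\"older's inequality, I would obtain a uniform bound $\int_0^T|u^*(t)|^2\,\d t\le M(R,T)$ over $A_R$. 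Let $Q$ be the probability measure with $\d Q/\d\P=\exp\bigl(\int_0^T u^*\,\d W-\tfrac12\int_0^T |u^*|^2\,\d t\bigr)$, which is well-defined by Novikov's criterion since $u^*$ is deterministic and in $L^2(0,T)$. Under $Q$, $\tilde W(t):=W(t)-\int_0^t u^*(s)\,\d s$ is a Brownian motion, and \eqref{eqn:droplet} rewritten in terms of $\tilde W$ becomes a small-noise perturbation of the target equation. On the event $\{\|\tilde W\|_{C([0,T])}\le\varepsilon\}$ for $\varepsilon$ sufficiently small, a Gr\"onwall-type estimate gives $\|(x(T),v(T),\eta(T))-(\hat x(T),\hat v(T),\hat\eta(T))\|_{\Xcal_{p_2}}\le r/2$, hence $X(T)\in B_r$. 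Since $\tilde W$ is Brownian under $Q$ this event has positive $Q$-probability, and the Cauchy-Schwarz/Girsanov inequality $\P(A)\ge Q(A)^2/\E_\P(\d Q/\d\P)^2$ combined with $\E_\P(\d Q/\d\P)^2\le \exp(CM)$ yields the uniform lower bound \eqref{ineq:irreducible}.

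The main obstacle is closing the continuity estimate for the solution map in the presence of the singular potential $G$: standard Lipschitz estimates for $\grad G$ and $\grad^2 G$ fail near the origin. The construction circumvents this by forcing $\hat x$ to stay in $\{|x|\ge 1/(2R)\}$; then for $\varepsilon$ small (depending on $R,T$), a continuity-plus-stopping-time argument keeps the perturbed trajectory inside $\{|x|\ge 1/(4R)\}$ on $[0,T]$, where $\grad G$ and $\grad^2 G$ are bounded, closing the Gr\"onwall loop. The coupling of $\eta$ to $x$ presents no additional difficulty, since $\eta$ depends on $x$ only through integration against the exponentially decaying kernel $K$.
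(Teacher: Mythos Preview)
Your target trajectory does not land in $B_r$. The ball $B_r$ in \eqref{form:B_r} requires $\|\eta\|_{p_2}\le r$, i.e., the $\eta$-component must be close to the \emph{zero} function in $\C_{p_2}$, not to the constant function $e_1$. With $\hat x(t)=e_1$ on $[T/2,T]$ you get $\hat\eta(T;s)=e_1$ for all $s\in[0,T/2]$, hence
\[
\|\hat\eta(T)\|_{p_2}^{p_2}\ge \int_0^{T/2}K(s)\,\d s,
\]
which is bounded below by a positive constant independent of $T$; the quantity you actually bound, $\|\hat\eta(T)-e_1\|_{p_2}$, is not what appears in $B_r$. The correct requirement is that $\int_0^T|\hat x(s)|^{p_2}\,\d s$ be small, since the transport equation together with Assumption~\ref{cond:K} gives $\|\hat\eta(T)\|_{p_2}^{p_2}\le e^{-\delta T}\|\eta_0\|_{p_2}^{p_2}+K(0)\int_0^T|\hat x(s)|^{p_2}\,\d s$. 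This forces $\hat x$ to spend most of $[0,T]$ near the origin (say at $\varepsilon e_1$) and to reach $e_1$ only in a short final window---which in turn makes your control $u^*$ contain $\nabla G(\varepsilon e_1)\sim\varepsilon^{-\beta_1}$ and your Gr\"onwall constant involve $|\nabla^2 G(\varepsilon e_1)|\sim\varepsilon^{-\beta_1-1}$. A corrected direct argument along your lines is still feasible, but it requires carefully ordering the choices of $T$, $\varepsilon$, and the noise threshold, none of which your sketch addresses.

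For comparison, the paper sidesteps this tension with a two-step route: it first proves irreducibility for the memoryless Langevin system \eqref{eqn:Langevin:droplet} via the Stroock--Varadhan support theorem combined with an explicit control (Lemma~\ref{lem:irreducible:control-xtilde}) that parks the path at $\varepsilon e_1$; the support theorem needs only the \emph{existence} of a $C^1$ control, not an $L^2$ bound, so the blow-up of $\nabla G(\varepsilon e_1)$ is harmless at that stage. It then transfers the conclusion to \eqref{eqn:droplet} by Girsanov, where the drift correction is the bounded-growth memory term $\int_0^\infty H(x_1-\eta_1)K$ rather than the singular $\nabla G$, and Novikov's condition follows from the exponential moment bound of Lemma~\ref{lem:Langevin:E.exp(sup.Phi(x,v))}.
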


Owing to the combination of nonlinearities and memory in \eqref{eqn:droplet}, we will not directly establish \eqref{ineq:irreducible}. Instead, we will compare the dynamics \eqref{eqn:droplet} with the following simpler system:
\begin{subequations}\label{eqn:Langevin:droplet}
\begin{align}
\d x_1(t)&=v_1(t)\d t, \label{eqn:Langevin:droplet:x}  \\
\d v_1(t)& = -v_1(t)\d t-\grad U(x_1(t))\d t-\grad G(x_1(t))\d t+\d W(t),\label{eqn:Langevin:droplet:v}\\
\d \eta_1(t)&=-\partial_s \eta_1(t)\d t,\quad \eta_1(t;0)=x_1(t),\, t>0.\label{eqn:Langevin:droplet:eta}
\end{align}
\end{subequations}
We note that the first two equations \eqref{eqn:Langevin:droplet:x}-\eqref{eqn:Langevin:droplet:v} are the classical Langevin dynamics, which is decoupled from the $\eta_1-$equation. The irreducibility proof of Proposition \ref{prop:irreducible} essentially consists of two main steps. Firstly, we prove an analogous result for \eqref{eqn:Langevin:droplet}, while making use of the fact that the Langevin dynamics \eqref{eqn:Langevin:droplet:x}-\eqref{eqn:Langevin:droplet:v} itself is well-behaved. This is summarized in Lemma \ref{lem:irreducible:Langevin} below. Then, we will employ Girsanov's Theorem to show that the law of \eqref{eqn:droplet} is equivalent to \eqref{eqn:Langevin:droplet}, allowing us to deduce \eqref{ineq:irreducible}.

\begin{lemma} \label{lem:irreducible:Langevin}
For every initial condition $(x,v,\eta)\in\Xcal_{p_2}$, let $(x_1(t),v_1(t),\eta_1(t)$ be the solution of \eqref{eqn:Langevin:droplet}. Then, for all positive constants $R>2,\,r\in(0,1)$, there exists $T=T(R,r)>0$ such that for all $t\ge T$,
\begin{align} \label{ineq:irreducible:Langevin:(x,v,eta)}
\inf_{(x,v,\eta)\in A_R} \P\big(  \big(x_1(t),v_1(t),\eta_1(t)\big) \in B_r \big)> 0,
\end{align}
where $A_R$ and $B_r$ are defined in \eqref{form:A_R} and \eqref{form:B_r}, respectively.

\end{lemma}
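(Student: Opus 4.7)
The plan is a Stroock--Varadhan support-theorem argument for the Langevin subsystem \eqref{eqn:Langevin:droplet:x}--\eqref{eqn:Langevin:droplet:v} via Girsanov, combined with the explicit transport representation for $\eta_1$. Since \eqref{eqn:Langevin:droplet:eta} is a linear transport equation driven by $x_1$, its solution reads $\eta_1(t;s)=x_1(t-s)$ for $0\le s\le t$ and $\eta_1(t;s)=\eta_0(s-t)$ for $s>t$. Using Assumption \ref{cond:K} in the form $K(s+t)\le K(s)e^{-\delta t}$, the contribution from the initial memory obeys
\begin{align*}
\int_t^\infty|\eta_0(s-t)|^{p_2}K(s)\d s\le e^{-\delta t}\|\eta_0\|_{p_2}^{p_2}\le R^{p_2}e^{-\delta t},
\end{align*}
which can be made smaller than any prescribed threshold for $t\ge T_1(R,r)$, uniformly in $(x,v,\eta)\in A_R$. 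It therefore suffices to find a scenario occurring with uniformly positive probability in which the trajectory-driven contribution $\int_0^T|x_1(T-s)|^{p_2}K(s)\d s$ is small and $(x_1(T),v_1(T))$ lies near $(e_1,0)$.

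Next, for each $(x,v,\eta)\in A_R$ I would construct a $C^2$ deterministic target $\tilde x:[0,T]\to\rbb^d\setminus\{0\}$ with $\tilde v=\dot{\tilde x}$ such that $(\tilde x(0),\tilde v(0))=(x,v)$, $(\tilde x(T),\tilde v(T))$ matches the prescribed $(x,v)$-target, and the memory profile $s\mapsto\tilde x(T-s)$ produces a $\C_{p_2}$-norm within the prescribed tolerance. Since $A_R$ enforces $|x|\ge 1/R$, and $\grad U$ and $\grad G$ are smooth on $\rbb^d\setminus\{0\}$, the construction can be carried out continuously in $(x,v)$ with $\tilde x$ remaining in a fixed compact annulus bounded away from the origin, so that $\grad U(\tilde x)$ and $\grad G(\tilde x)$ are uniformly bounded along the path. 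Setting
\begin{align*}
\tilde w(t):=\int_0^t\bigl(\ddot{\tilde x}(r)+\dot{\tilde x}(r)+\grad U(\tilde x(r))+\grad G(\tilde x(r))\bigr)\d r,
\end{align*}
the deterministic version of \eqref{eqn:Langevin:droplet:x}--\eqref{eqn:Langevin:droplet:v} with $W$ replaced by $\tilde w$ reproduces $(\tilde x,\tilde v)$, and $\int_0^T|\dot{\tilde w}(r)|^2\d r$ is bounded uniformly over $A_R$.

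I would then apply Girsanov with shift $W\mapsto W-\tilde w$: Novikov's condition is satisfied by the uniform $L^2$-bound on $\dot{\tilde w}$, and the resulting Radon--Nikodym density is bounded below by a positive constant uniformly in $A_R$. Under the shifted measure, the $W$-driven solution of \eqref{eqn:Langevin:droplet:x}--\eqref{eqn:Langevin:droplet:v} has the same law as the solution driven by $\tilde w+\widehat W$ for some Brownian motion $\widehat W$. A standard continuous-dependence estimate for the It\^o map on the compact region away from the origin then shows that $\sup_{[0,T]}(|x_1-\tilde x|+|v_1-\tilde v|)$ is arbitrarily small with positive probability. Combined with the transport formula for $\eta_1$ and the tail bound above, this places $(x_1(T),v_1(T),\eta_1(T))$ inside $B_r$, yielding \eqref{ineq:irreducible:Langevin:(x,v,eta)}.

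The principal obstacle is the uniform-in-$A_R$ construction of $\tilde x$ with bounded control norm: $\tilde x$ must simultaneously avoid the origin (so that $\grad G(\tilde x)$ is controlled along the whole path), match both the $(x,v)$- and $\eta$-targets at time $T$, and keep $\int_0^T|\dot{\tilde w}|^2\d r$ uniform in $A_R$. The key enabling features are the lower bound $|x|\ge 1/R$ on $A_R$, the exponential decay of $K$ which suppresses distant-past contributions to $\|\eta_1\|_{p_2}$, and the smoothness of $\grad U,\grad G$ on compact subsets of $\rbb^d\setminus\{0\}$; a two-stage construction that first joins $(x,v)$ to a fixed base point in the annulus and then uses a short terminal window to reach the target should satisfy all of these uniformity requirements.
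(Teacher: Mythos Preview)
Your overall strategy---a support/Girsanov control argument for the Langevin pair $(x_1,v_1)$ together with the transport representation for $\eta_1$---is the same as the paper's. The paper invokes the Stroock--Varadhan support theorem locally and then passes to a uniform bound by covering the compact set $\{|x|+|x|^{-1}+|v|\le R\}$ with finitely many balls, whereas you propose a single continuous-in-$(x,v)$ control and a direct Girsanov estimate; these are equivalent in spirit.

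There is, however, a genuine tension in your construction. You require the target path $\tilde x$ to stay in a ``fixed compact annulus bounded away from the origin'' so that $\grad G(\tilde x)$ is uniformly bounded, while simultaneously making the memory norm $\|\eta_1(T)\|_{p_2}$ arbitrarily small. These are incompatible: if $|\tilde x(t)|\ge a>0$ on $[0,T]$, then for any $T\ge 1$,
\[
\int_0^T |\tilde x(T-s)|^{p_2}K(s)\,\d s \;\ge\; a^{p_2}\int_0^{1} K(s)\,\d s,
\]
a fixed positive number independent of $T$. Thus the $\eta$-target cannot be met for small $r$ unless the inner radius $a$ shrinks with $r$. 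The paper's control lemma (Lemma~\ref{lem:irreducible:control-xtilde}) addresses this by driving $\tilde x_1$ to $\varepsilon e_1$---i.e., \emph{close} to the singularity---and parking it there for the bulk of $[0,t]$, which yields $\int_0^t|\tilde x_1(s)|^{p_2}\,\d s<r$; only a short terminal window returns to $e_1$. As a consequence, $\grad G(\tilde x)$ is \emph{not} uniformly bounded along the path in the sense you use (it is of order $\varepsilon^{-\beta_1}$ on the parking segment), but the control $\Gamma$ in \eqref{form:Gamma} is still smooth and finite for each fixed $\varepsilon$, which is all the support theorem needs. Your two-stage idea can be salvaged by placing the ``fixed base point'' at distance of order $r$ from the origin and accepting that the control norm (hence the positive probability lower bound) depends on $r$; the claimed uniform boundedness of $\grad G(\tilde x)$ must be dropped.

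A minor point: the statement that ``the Radon--Nikodym density is bounded below by a positive constant uniformly in $A_R$'' is not literally correct (it is a random variable); what you need is a uniform lower bound on the probability of the tube event under the original measure, which follows from the equivalence of measures and a uniform upper bound on $\int_0^T|\dot{\tilde w}|^2$. The paper sidesteps this by citing the support theorem directly.
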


The proof of Lemma \ref{lem:irreducible:Langevin} will be presented at the end of this subsection. We now invoke Lemma \ref{lem:irreducible:Langevin} to conclude Proposition \ref{prop:irreducible}, whose argument is standard and can be found in the literature \cite{foldes2019large,nguyen2023small,
seong2023exponential}.

\begin{proof}[Proof of Proposition \ref{prop:irreducible}]
We first rewrite the auxiliary system \eqref{eqn:Langevin:droplet} as follows:
\begin{align*}
\d x_1(t)&=v_1(t)\d t,  \\
\d v_1(t)& = -v_1(t)\d t-\grad U(x_1(t))\d t-\grad G(x_1(t))\d t-\int_0^\infty H(x_1(t)-\eta_1(t;s))K(s)\d s \d t +\d W(t), \\
&\qquad\qquad +\int_0^\infty H(x_1(t)-\eta_1(t;s))K(s)\d s \d t,\\
\d \eta_1(t)&=-\partial_s \eta_1(t)\d t,\quad \eta_1(t;0)=x_1(t),\, t>0.
\end{align*}
Observe that the above system only differs from \eqref{eqn:droplet} by the appearance of the term $+\int_0^\infty H(x_1(t)-\eta_1(t;s))K(s)\d s \d t$. In order to produce the uniform bound \eqref{ineq:irreducible} by making use of \eqref{ineq:irreducible:Langevin:(x,v,eta)}, we first show that $\big(x(\cdot),v(\cdot),\eta(\cdot)\big)$ and $\big(x_1(\cdot),v_1(\cdot),\eta_1(\cdot)\big)$ are equivalent in law on $C([0,t];\Xcal_{p_2})$. In turn, this can be established by Girsanov's Theorem provided the following Novikov's condition is verified:
\begin{align} \label{cond:Novikov}
\E\exp\Big\{\frac{1}{2}\int_0^t \Big|\int_0^\infty H\big(x_1(r)-\eta_1(r;s)\big) K(s)\d s\Big|^2   \d r \Big\}<\infty.
\end{align} 
To establish~\eqref{cond:Novikov}, we recall condition \eqref{cond:H:1} and condition \eqref{cond:p_2} to estimate
\begin{align*}
\Big|\int_0^\infty H\big(x_1(r)-\eta_1(r;s)\big) K(s)\d s\Big|^2& \le c\Big(|x_1(r)|^{p_1}+ \int_0^\infty \big|\eta_1(r;s)\big|^{p_1}  K(s)\d s+1\Big)^2\\
&\le c|x_1(r)|^{2p_1}+c \|\eta_1(r)\|^{2p_1}_{2p_1}+c\\
&\le c |x_1(r)|^{p_2}+c \|\eta_1(r)\|^{p_2}_{p_2}+c.
\end{align*}
To further bound the above right-hand side, we employ the argument as in \eqref{ineq:L.|eta|_(p_2)^(p_2)} while making use of condition \eqref{cond:K:1} to see that
\begin{align*}
\ddt \|\eta_1(t)\|^{p_2}_{p_2} \le K(0)|\eta_1(t;0)|^{p_2}-\delta \|\eta_1(t)\|^{p_2}_{p_2},
\end{align*}
whence (recalling $\eta_1(t;0)=x_1(t)$)
\begin{align} \label{ineq:eta_1(t)}
\|\eta_1(t)\|^{p_2}_{p_2} \le e^{-\delta t} \|\eta\|_{p_2}^{p_2}+K(0)\int_0^t |x_1(s)|^{p_2}\d s.
\end{align}
As a consequence, for all $r\in[0,t]$, it holds that
\begin{align}
\Big|\int_0^\infty H\big(x_1(r)-\eta_1(r;s)\big) K(s)\d s\Big|^2& \le c|x_1(r)|^{p_2}+c \|\eta_1(r)\|^{p_2}_{p_2}+c \nt \\
&\le  c|x_1(r)|^{p_2}+  \|\eta\|_{p_2}^{p_2}+c\int_0^t |x_1(s)|^{p_2}\d s+c  \nt \\
&\le c\|\eta\|_{p_2}^{p_2} +c \sup_{r\in[0,t]}|x_1(r)|^{p_2}+c, \label{ineq:int.H(x_1-eta_1).K.ds}
\end{align}
for some positive constant $c$ that possibly depends on time $t$. Next, since $U(x)$ dominates $|x|^{p_2}$, cf. conditions \eqref{cond:q_0} and~\eqref{cond:p_2}, we infer that, for all $\beta$ sufficiently small,
\begin{align*}
c \sup_{r\in[0,t]}|x_1(r)|^{p_2} \le \beta  \sup_{r\in[0,t]}U(x_1(r))+C\le \beta  \sup_{r\in[0,t]}\Phi(x_1(r),v_1(r))+C,
\end{align*}
where the last implication follows from the definition of $\Phi$ in \eqref{form:Phi}. Together with \eqref{ineq:int.H(x_1-eta_1).K.ds}, we obtain the a.s. bound
\begin{align*} 
 \int_0^t \Big|\int_0^\infty H\big(x_1(r)-\eta_1(r;s)\big) K(s)\d s\Big|^2   \d r \le  c \|\eta\|_{p_2}^{p_2}+ \beta \sup_{r\in[0,t]}\Phi(x_1(r),v_1(r))+C.
\end{align*}
In view of Lemma \ref{lem:Langevin:E.exp(sup.Phi(x,v))}, we deduce that
\begin{align}
&\E\exp\Big\{\frac{1}{2}\int_0^t \Big|\int_0^\infty H\big(x_1(r)-\eta_1(r;s)\big) K(s)\d s\Big|^2   \d r \Big\}  \nt \\
&\le \exp \big\{c\|\eta\|_{p_2}^{p_2}+C\big\} \cdot \E \exp \Big\{ \beta  \sup_{r\in[0,t]}\Phi(x_1(r),v_1(r))  \Big\} \le C(x,v,\eta,t)<\infty, \label{cond:Novikov:(x,v,eta)}
\end{align}
which verifies the Novikov condition \eqref{cond:Novikov}, thereby establishing the equivalence in law.

Turning back to \eqref{ineq:irreducible}, we introduce the change-of-measure function
\begin{align*}
Q(t) &= \exp\Big\{\int_0^t \Big\la \int_0^\infty H\big(x_1(r)-\eta_1(r;s)\big) K(s)\d s,\d W(r) \Big\ra\\
&\qquad\qquad\qquad\qquad -\frac{1}{2}\int_0^t \Big|\int_0^\infty H\big(x_1(r)-\eta_1(r;s)\big) K(s)\d s\Big|^2   \d r \Big\}.
\end{align*}
In light of Girsanov's Theorem, for all time $t>0$, the following holds:
\begin{align*}
\P\big(  \big(x(t),v(t),\eta(t)\big) \in B_r \big) = \E\big[ Q(t)\cdot \boldsymbol{1}\big\{  \big(x_1(t),v_1(t),\eta_1(t)\big) \in B_r \big\} \big],
\end{align*}
where $B_r$ is defined in \eqref{form:B_r}. On the one hand, letting $\lambda\in (0,1)$ be given and chosen later, we have
\begin{align*}
\P\big(  \big(x(t),v(t),\eta(t)\big) \in B_r \big) &= \E\big[ Q(t)\cdot \boldsymbol{1}\big\{  \big(x_1(t),v_1(t),\eta_1(t)\big) \in B_r \big\} \big]\\
&\ge  \lambda \P\big( Q(t)>\lambda, \big(x_1(t),v_1(t),\eta_1(t)\big) \in B_r \big).
\end{align*}
On the other hand, 
\begin{align*}
\P\big( \big(x_1(t),v_1(t),\eta_1(t)\big) \in B_r \big)  & \le \P\big( Q(t)>\lambda, \big(x_1(t),v_1(t),\eta_1(t)\big) \in B_r \big)+ \P\big(Q(t)\le \lambda\big).
\end{align*}
We deduce from the two estimates above that
\begin{align*}
\frac{1}{\lambda}\P\big(  \big(x(t),v(t),\eta(t)\big) \in B_r \big)  &\ge \P\big( \big(x_1(t),v_1(t),\eta_1(t)\big) \in B_r \big) -\P\big(Q(t)\le \lambda\big),
\end{align*}
whence (recalling the set $A_R$ in \eqref{form:A_R})
\begin{align}
&\frac{1}{\lambda}\inf_{(x,v,\eta)\in A_R}\P\big(  \big(x(t),v(t),\eta(t)\big) \in B_r \big)  \nt  \\
&\ge \inf_{(x,v,\eta)\in A_R}\P\big( \big(x_1(t),v_1(t),\eta_1(t)\big) \in B_r \big) -\sup_{(x,v,\eta)\in A_R}\P\big(Q(t)\le \lambda\big). \label{ineq:inf.P(x,v,eta)>inf.P(x_1,v_1,eta_1)-sup.P(Q<lambda)}
\end{align}
To estimate the supremum on the above right-hand side, we note that
\begin{align*}
\P\big(Q(t)\le \lambda\big) & = \P\Big( \frac{1}{2}\int_0^t \Big|\int_0^\infty H\big(x_1(r)-\eta_1(r;s)\big) K(s)\d s\Big|^2   \d r \\
&\qquad\qquad -\int_0^t \Big\la \int_0^\infty H\big(x_1(r)-\eta_1(r;s)\big) K(s)\d s,\d W(r) \Big\ra \ge -\log (\lambda)\Big).
\end{align*} 
By Markov's inequality, we deduce that
\begin{align*}
\P\big(Q(t)\le \lambda\big) & \le \frac{1}{-\log(\lambda)}\Big( \E \Big[\frac{1}{2}\int_0^t \Big|\int_0^\infty H\big(x_1(r)-\eta_1(r;s)\big) K(s)\d s\Big|^2   \d r\Big]\\
&\qquad\qquad\qquad+ \E\Big| \int_0^t \Big\la \int_0^\infty H\big(x_1(r)-\eta_1(r;s)\big) K(s)\d s,\d W(r) \Big\ra  \Big|\Big).
\end{align*}
Also, H\"{o}lder's inequality and It\^o's isometry yield
\begin{align*}
&\E\Big| \int_0^t \Big\la \int_0^\infty H\big(x_1(r)-\eta_1(r;s)\big) K(s)\d s,\d W(r) \Big\ra  \Big|\\
&\le \sqrt{t}\cdot \sqrt{\int_0^t \E\Big|\int_0^\infty H\big(x_1(r)-\eta_1(r;s)\big) K(s)\d s\big|^2\d r }.
\end{align*}
So, 
\begin{align*}
\P\big(Q(t)\le \lambda\big) & \le \frac{1}{-\log(\lambda)}\Big( \E \Big[\int_0^t \Big|\int_0^\infty H\big(x_1(r)-\eta_1(r;s)\big) K(s)\d s\Big|^2   \d r\Big]+t\Big)\\
&\le  \frac{1}{-\log(\lambda)}\cdot \exp\Big\{c\|\eta\|_{p_2}^{p_2}+\beta\Phi(x,v)+C \Big\},
\end{align*}
where the last implication follows from \eqref{cond:Novikov:(x,v,eta)} and Lemma \ref{lem:Langevin:E.exp(sup.Phi(x,v))}. Since $(x,v,\eta)\in A_R$, we infer that
\begin{align*}
\sup_{(x,v,\eta)\in A_R}\P\big(Q(t)\le \lambda\big) & \le \frac{C}{-\log(\lambda)},
\end{align*}
for some positive constant $C=C(R,t)$ independent of $\lambda$. From \eqref{ineq:inf.P(x,v,eta)>inf.P(x_1,v_1,eta_1)-sup.P(Q<lambda)} together with Lemma \ref{lem:irreducible:Langevin}, we obtain
\begin{align*}
&\frac{1}{\lambda}\inf_{(x,v,\eta)\in A_R}\P\big(  \big(x(t),v(t),\eta(t)\big) \in B_r \big)  \nt  \\
&\ge \inf_{(x,v,\eta)\in A_R}\P\big( \big(x_1(t),v_1(t),\eta_1(t)\big) \in B_r \big) -\sup_{(x,v,\eta)\in A_R}\P\big(Q(t)\le \lambda\big)\ge c-  \frac{C}{|\log(\lambda)|}.
\end{align*}
We emphasize again that the positive constants $c,C$ do not depend on the choice of $\lambda$. Lastly, by taking $\lambda$ small enough, we immediately obtain the uniform lower bound
\begin{align*}
\inf_{(x,v,\eta)\in A_R}\P\big(  \big(x(t),v(t),\eta(t)\big) \in B_r \big)\ge \lambda \Big(   c-  \frac{C}{|\log(\lambda)|} \Big) >0,
\end{align*}
thereby finishing the irreducibility argument.

\end{proof}

We now turn to Lemma \ref{lem:irreducible:Langevin}, whose argument will employ the Support Theorem \cite{stroock1972degenerate,stroock1972support} as well as Lemma \ref{lem:irreducible:control-xtilde}. 

\begin{proof}[Proof of Lemma \ref{lem:irreducible:Langevin}]
Firstly, we consider the pair $(x_1,v_1)$ solving \eqref{eqn:Langevin:droplet:x}-\eqref{eqn:Langevin:droplet:v}. To derive an estimate in probability for $(x_1,v_1)$, we introduce the deterministic control problem
\begin{align} \label{eqn:Langevin:droplet:xtile}  
\d \xt_1(t)&=\vt_1(t)\d t, \nt  \\
\d \vt_1(t)& = -\vt_1(t)\d t-\grad U(\xt_1(t))\d t-\grad G(\xt_1(t))\d t+\d \Gamma(t),\\
\xt_1(0)&=x,\quad \vt_1(0)=v,\nt 
\end{align}
where $\Gamma\in C^1([0,\infty);\rbb^d)$ is a control function. By the Stroock-Varadhan Support Theorem, for each $t,\,\varepsilon>0$ and $(x_0,v_0)$, there exists $\varepsilon_1=\varepsilon_1(t,\varepsilon)$ such that
\begin{align*}
\inf_{(x,v)\in E\big((x_0,v_0),\varepsilon_1\big)} \P\Big( \sup_{\ell\in [0,t]} |x_1(\ell)-\xt_1(\ell)|+ |v_1(\ell)-\vt_1(\ell)|\le \varepsilon \Big) >0,
\end{align*}
where
\begin{align*}
E\big((x_0,v_0),\varepsilon_1\big) = \big\{(x,v)\in \rbb^d\setminus\{0\}\times\rbb^d: |x-x_0|+|v-v_0|<\varepsilon_1\big\}.
\end{align*}
In view of Lemma \ref{lem:irreducible:control-xtilde}, for each $R>2,\varepsilon>0$ and $(x_0,v_0)$ such that $|x_0|+|x_0|^{-1}+|v_0|\le R$, there exists a path $(\xt_1,\vt_1,\Gamma)$ satisfying 
\begin{align*}
(\xt_1(t),\vt_1(t))=(e_1,0)\quad\text{and}\quad \int_0^t |\xt_1(s)|^{p_2}\d s < \varepsilon.
\end{align*}
It follows that
\begin{align*}
\inf_{(x,v)\in E\big((x_0,v_0),\varepsilon_1\big)} \P\Big(  |x_1(t)-e_1|+ |v_1(t)|\le \varepsilon,\quad \text{and}\quad \int_0^t |x_1(s)|^{p_2}\d s \le C\,t\,\varepsilon \Big) >0,
\end{align*}
for some positive constant $C$ independent of $t,\varepsilon,\varepsilon_1$ and $R$. Since the set $\{|x|+|x|^{-1}+|v|\le R\}$ is compact and thus can be covered by finitely many open disks $E((x_0,v_0),\varepsilon_1)$, we obtain the uniform bound
\begin{align} \label{ineq:inf_(x+x^(-1)+v<R)}
\inf_{(x,v):|x|+|x|^{-1}+|v|\le R} \P\Big(  |x_1(t)-e_1|+ |v_1(t)|\le \varepsilon,\quad \text{and}\quad \int_0^t |x_1(s)|^{p_2}\d s \le C\,t\,\varepsilon \Big) >0.
\end{align}

Turning to the $\eta_1$-equation \eqref{eqn:Langevin:droplet:eta}, we combine \eqref{ineq:eta_1(t)} and \eqref{ineq:inf_(x+x^(-1)+v<R)} to see that for all $R>2,t>1$ and $\varepsilon>0$, it holds that
\begin{align*}
\inf_{(x,v,\eta)\in A_R} \P\Big(  |x_1(t)-e_1|+ |v_1(t)|\le \varepsilon,\quad \text{and}\quad \|\eta_1(t)\|^{p_2}_{p_2} \le e^{-\delta t}R^{p_2} + C\,t\,\varepsilon \Big) >0.
\end{align*}
By taking $t$ sufficiently large and then shrinking $\varepsilon$ small enough, this produces the bound \eqref{ineq:irreducible:Langevin:(x,v,eta)}, as claimed.

\end{proof}

\subsection{Proof of the main results} \label{sec:ergodicity:proof-of-theorem}
In this section, we establish the exponential mixing rate of $P_t$ toward the unique invariant probability measure $\nu$ in terms of Wassertstein distances and observables, as presented in Theorem \ref{thm:geometric-ergodicity} and Corollary \ref{cor:geometric-ergodicity}, respectively. The argument follows the approach of \cite{butkovsky2020generalized,hairer2011asymptotic}, which consists of constructing Lyapunov functions, $\vr_N$-\emph{contracting} property, and $\vr_N$-\emph{small} sets where $\vr_N$ is the metric defined in \eqref{form:varrho_N}. For the reader's convenience, we recall these notions below.

\begin{definition} \label{def:Lyapunov-contracting-dsmall} 1. A function $V:\Xcal_{p_2}\to [0,\infty)$ is called a \emph{Lyapunov} function for $P_t$ if
\begin{align*}
P_tV(X)\le C e^{-ct}V(X)+C,\quad t\ge 0,\, X\in\Xcal_{p_2},
\end{align*}
for some positive constants $c,C$ independent of $X$ and $t$.

2. A distance-like function $\vr$ bounded by 1 is called \emph{contracting} for $P_t$ if there exists $\lambda_1\in(0,1)$ such that for any $X,\Xt$ with $\vr(X,\Xt)<1$, it holds that
\begin{align*}
\W_{\vr}\big( P_t(X,\cdot) ,P_t(\Xt,\cdot)  \big) \le \lambda_1 \vr(X,\Xt).
\end{align*}

3. A set $A \subset \Xcal_{p_2}$ is called $\vr$-\emph{small} for $P_t$ if for some $\lambda_2=\lambda_2(A)>0$,
\begin{align*}
\sup_{X,\Xt\in A}\W_{\vr}\big( P_t(X,\cdot) ,P_t(\Xt,\cdot)  \big) \le 1-\lambda_2.
\end{align*}

\end{definition}

On the one hand, the existence of a Lyapunov function is guaranteed by the energy estimates in Lemma \ref{lem:moment-bound}. On the other hand, in Proposition \ref{prop:contracting-dsmall} below, we will demonstrate that one can always tune $\vr_N$ defined in \eqref{form:varrho_N} such that $\vr_N$ is contracting for $P_t$ and that the bounded set $A_R$ defined in \eqref{form:A_R} is small with respect to $\vr_N$. 

\begin{proposition} \label{prop:contracting-dsmall}
Let $\vr_N$ and $A_R$ be defined in \eqref{form:varrho_N} and \eqref{form:A_R}, respectively. Then,

1. There exist $N_1$ and $t_1$ sufficiently large such that for all $N>N_1$ and $t>t_1$, $\vr_N$ is contracting for $P_t$ in the sense of Definition \ref{def:Lyapunov-contracting-dsmall}, part 2., with $\lambda_1=\frac{1}{2}$, i.e.,
\begin{align} \label{ineq:varrho_N:contracting}
\W_{\vr_N}\big( P_t(X,\cdot) ,P_t(\Xt,\cdot)  \big) \le \frac{1}{2} \vr_N(X,\Xt),
\end{align}
whenever $\vr_N(X,\Xt)<1$.

2. For all $N>0,R>2$, there exists $t_2=t_2(N,R)$ such that for all $t\ge t_2$, the set $A_R$ is $\vr_N$-small in the sense of Definition \ref{def:Lyapunov-contracting-dsmall}, part 3., with a constant $\lambda_2=\lambda_2(R)\in(0,1)$, i.e.,
\begin{align} \label{ineq:varrho_N:dsmall}
\sup_{X,\Xt\in A_R}\W_{\vr_N}\big( P_t(X,\cdot) ,P_t(\Xt,\cdot)  \big) \le 1-\lambda_2.
\end{align}
\end{proposition}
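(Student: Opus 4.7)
\emph{Overall strategy.} For both parts the strategy is to combine the asymptotic strong Feller estimate of Lemma~\ref{lem:droplet:asymptotic-Feller} with the Lyapunov bound of Lemma~\ref{lem:moment-bound} (for the contracting property), and to couple via the irreducibility of Proposition~\ref{prop:irreducible} (for the $\vr_N$-smallness). Since $\vr$ is an honest metric (the Riemannian distance associated with the conformal weight $e^{\Psi/2}$), so is $\vr_N=N\vr\wedge 1$, which is moreover bounded by $1$; in particular the dual Kantorovich representation \eqref{form:W_d:dual-Kantorovich} is available for $\W_{\vr_N}$.

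\emph{Part 1: contraction.} For $X,\Xt$ with $\vr_N(X,\Xt)<1$, so that $\vr_N(X,\Xt)=N\vr(X,\Xt)$, take a $C^1$-path $\gamma$ joining $X$ to $\Xt$ whose Riemannian length is within $\varepsilon$ of $\vr(X,\Xt)$. For any $f\in C^1_b(\Xcal_{p_2})$ with $[f]_{\textup{Lip},\vr_N}\le 1$, subtraction of a constant yields $|f|\le 1/2$, while the infinitesimal Riemannian bound gives $\|Df(Y)\|_{\Xcal_{p_2}^*}\le N e^{\Psi(Y)/2}$. Substituting into \eqref{ineq:droplet:asymptotic-strong-Feller} and applying $P_t e^{\Psi}\le C e^{-ct}e^{\Psi}+C$ from Lemma~\ref{lem:moment-bound} part~1, I get
\begin{align*}
\|DP_tf(Y)\|_{\Xcal_{p_2}^*}\le C Ne^{-ct}\bigl(e^{\Psi(Y)/2}+1\bigr)+\tfrac{C}{2}\,e^{\Psi(Y)/2}.
\end{align*}
Integrating this estimate along $\gamma$, and using $\Psi\ge c_\kappa>0$ from Choice~\ref{choice:kappa} to conclude $e^{\Psi/2}\ge 1$ and hence $\int_0^1\|\gamma'\|_{\Xcal_{p_2}}ds\le\int_0^1 e^{\Psi(\gamma)/2}\|\gamma'\|_{\Xcal_{p_2}}ds$, I obtain, after sending $\varepsilon\to 0$,
\begin{align*}
|P_tf(X)-P_tf(\Xt)|\le\Bigl(2Ce^{-ct}+\frac{C}{2N}\Bigr)\vr_N(X,\Xt).
\end{align*}
Picking $N_1$ so large that $C/(2N_1)<1/4$ and then $t_1=t_1(N)$ so large that $2Ce^{-ct}<1/4$ forces the prefactor below $1/2$; the dual Kantorovich formula then produces \eqref{ineq:varrho_N:contracting}. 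Density of $C^1_b$ in the Lipschitz class with respect to $\vr_N$ (by standard mollification on $\Xcal_{p_2}$) extends the bound to all $1$-Lipschitz $f$.

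\emph{Part 2: $\vr_N$-smallness.} For $X,\Xt\in A_R$, I use the product coupling: run two independent copies of \eqref{eqn:droplet} from $X$ and $\Xt$ driven by independent Brownian motions. Proposition~\ref{prop:irreducible} furnishes $T=T(R,r)$ and a uniform lower bound $p_0=p_0(R,r)>0$ such that each marginal lies in $B_r$ with probability at least $p_0$; by independence, both lie in $B_r$ with probability at least $p_0^2$. Since $|x|\ge 1-r>0$ on $B_r$ and all other coordinates are uniformly bounded there, $\Psi$ is uniformly bounded on $B_r$ and hence the Riemannian diameter satisfies $\sup_{Y,Y'\in B_r}\vr(Y,Y')\le C(r)\,r$. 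Shrinking $r$ so that $NC(r)r\le 1/2$, the coupling bound yields
\begin{align*}
\W_{\vr_N}\bigl(P_T(X,\cdot),P_T(\Xt,\cdot)\bigr)\le \E\,\vr_N\bigl(X(T;X),X(T;\Xt)\bigr)\le \tfrac{1}{2}p_0^2+(1-p_0^2)=1-\tfrac{1}{2}p_0^2,
\end{align*}
which gives \eqref{ineq:varrho_N:dsmall} with $\lambda_2=p_0^2/2$.

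\emph{Main obstacle.} The delicate point is the contraction step, where one must balance the two contributions in Lemma~\ref{lem:droplet:asymptotic-Feller}: the Malliavin term does not decay in $t$ and scales like $e^{\Psi/2}\sup|f|$, so the truncation $\vr_N\le 1$ is essential to bound $|f|$ by a constant, and the compensating factor $1/N$ only emerges after rewriting $e^{\Psi/2}\vr\sim\vr_N/N$; meanwhile the Jacobian term carries a factor $Ne^{-ct}$, so $N$ must be fixed first and $t$ taken large afterwards. The positivity of $\Psi$ afforded by Choice~\ref{choice:kappa} is what allows the clean length comparison along $\gamma$ without a separate localization on level sets of $\Psi$.
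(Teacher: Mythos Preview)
Your proof is correct and follows essentially the same route as the paper: in Part~1 you combine the asymptotic strong Feller estimate with the Lyapunov bound, pass to the dual Kantorovich formulation, and integrate the gradient bound along an almost-geodesic for $\vr$; in Part~2 you use the independent (product) coupling together with irreducibility into $B_r$ and the uniform bound on the $\vr$-diameter of $B_r$, shrinking $r$ so that $N\vr<1$ on the favorable event --- exactly the paper's argument. One small imprecision: the claim ``$\Psi\ge c_\kappa>0$ from Choice~\ref{choice:kappa}'' is not quite right, since Choice~\ref{choice:kappa} controls only the $U+\tfrac12|v|^2+\kappa\langle x,v\rangle-\langle x,v\rangle/|x|$ block while $G$ can be negative (e.g.\ $G(x)=-\alpha\log|x|$ for $|x|>1$); what is true and sufficient is that $\Psi$ is bounded below (because $U$ dominates $|G|$ at infinity and $G\to\infty$ at the origin), so $e^{\Psi/2}\ge c_0>0$, which is all you need to absorb the ``$+1$'' into the weighted length --- the paper handles this implicitly by writing $P_te^\Psi\le Ce^\Psi$ directly.
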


The proof of Proposition \ref{prop:contracting-dsmall} is quite standard \cite{hairer2011asymptotic} and makes use of the auxiliary results presented in Section \ref{sec:ergodicity:asymptotic-strong-feller} and Section \ref{sec:ergodicity:irreducibility}. For the sake of clarity, we will defer the explicit argument to the end of this section. We are now in a position to conclude the proof of Theorem \ref{thm:geometric-ergodicity} by verifying the conditions of \cite[Theorem 4.8]{hairer2011asymptotic}

\begin{proof}[Proof of Theorem \ref{thm:geometric-ergodicity}] On the one hand, from Lemma \ref{lem:moment-bound}, cf., \eqref{ineq:E.e^Phi(t)<C.e^(-ct).e^Phi(0)+C}, the function $V=e^{\Psi}$ plays the role of the Lyapunov function as in Definition \ref{def:Lyapunov-contracting-dsmall}, part 1. On the other hand, in view of Proposition \ref{prop:contracting-dsmall}, the distance $\vr_N$ defined in \eqref{form:varrho_N} is contracting for $P_t$, and any bounded set $A_R$ $(R>2)$ is $\vr_N$-small. In other words, all of the conditions of \cite[Theorem 4.8]{hairer2011asymptotic} are met. In light of \cite[Theorem 4.8]{hairer2011asymptotic}, we obtain the existence and uniqueness of the invariant probability measure $\nu$ for $P_t$ as well as the exponential convergence rate \eqref{ineq:geometric-ergodciity}.

\end{proof}

As a consequence of Theorem \ref{thm:geometric-ergodicity}, we deduce the mixing rate with respect to observables in Corollary \ref{cor:geometric-ergodicity}. Since the proof of Corollary \ref{cor:geometric-ergodicity} is short, we include it here for the sake of completeness.
\begin{proof}[Proof of Corollary \ref{cor:geometric-ergodicity}]
Let $f\in C(\Xcal_{p_2};\rbb)$ be such that $[f]_{\text{Lip},\vrt_N}<\infty$. Given $X_0\in \Xcal_{p_2}$, in view of estimate \eqref{ineq:W_d(nu_1,nu_2):dual}, it holds that
\begin{align*}
\W_{\vrt_N}\big(P_t\delta_{X_0},\nu \big)\ge \frac{1}{[f]_{\text{Lip},\vrt_N}} \Big|P_t f(X_0)-\int_{\Xcal_{p_2}}\close f(X)\nu(\d X) \Big|.
\end{align*}
From Theorem \ref{thm:geometric-ergodicity}, we deduce that
\begin{align*}
 \Big|P_t f(X_0)-\int_{\Xcal_{p_2}}\close f(X)\nu(\d X) \Big| \le  [f]_{\text{Lip},\vrt_N}\W_{\vrt_N}\big(\delta_{X_0},\nu \big) Ce^{-ct}.
\end{align*} 
This produces \eqref{ineq:geometric-ergodciity:P_tf}, thereby finishing the proof.

\end{proof}

We now turn to Proposition \ref{prop:contracting-dsmall}, which was ultimately invoked to conclude the main result in Theorem \ref{thm:geometric-ergodicity}. The proof of Proposition \ref{prop:contracting-dsmall} will employ the auxiliary estimates performed in Section \ref{sec:ergodicity:asymptotic-strong-feller} and Section \ref{sec:ergodicity:irreducibility}. See also \cite{butkovsky2020generalized, hairer2011asymptotic}.

\begin{proof}[Proof of Proposition \ref{prop:contracting-dsmall}]
1. Letting $N>0$ be given and chosen later, consider $X,\Xt\in\Xcal_{p_2}$ such that $\vr_N(X,\Xt)<1$. From the expression \eqref{form:varrho_N}, this implies that
\begin{align*}
\vr_N(X,\Xt) = N \vr(X,\Xt) \mi 1 = N \vr(X,\Xt),
\end{align*}
where $\vr$ is defined in \eqref{form:varrho}. By the dual Kantorovich formula \eqref{form:W_d:dual-Kantorovich}, the inequality \eqref{ineq:varrho_N:contracting} is equivalent to 
\begin{align} \label{ineq:varrho_N:contracting:P_t}
|P_t f(X)-P_tf(\Xt)|\le \frac{1}{2}N\vr(X,\Xt),
\end{align}
which holds for all $f\in C^1_b(\Xcal_{p_2})$ satisfying $[f]_{\text{Lip},\vr_N}\le 1$. Here, we recall $[f]_{\text{Lip},\vr_N}$ is the Lipschitz norm with respect to $\vr_N$ in \eqref{form:Lipschitz}. Note also that \eqref{form:Lipschitz} may be recast as
\begin{align*}
[f]_{\text{Lip},\vr_N} = \sup_{Y\neq \Yt} \frac{f(Y)-f(\Yt)}{\vr_N(Y,\Yt)} =  \sup_{Y\neq \Yt} \frac{[f(Y)-f(e_1,0,0)]-[f(\Yt)-f(e_1,0,0)]}{\vr_N(Y,\Yt)}.
\end{align*}
In the above, $e_1=(1,0,\dots,0)$ is the unit element in $\rbb^d$. It therefore suffices to prove \eqref{ineq:varrho_N:contracting:P_t} for those functions $f$ such that $f(e_1,0,0)=0$. In particular, this implies that $\|f\|_{\infty}\le 1$ since
\begin{align*}
|f(Y)|=|f(Y)-f(e_1,0,0)|\le \vr_N\big(Y,(e_1,0,0)\big)\le 1,\quad Y\in\Xcal_{p_2}.
\end{align*}
Furthermore, for $\Yt$ sufficiently close to $Y$ such that the path
\begin{align*}
\gamma_*(s) = s\cdot Y+(1-s)\cdot \Yt\quad \text{for}\quad 0\leq s\leq 1
\end{align*}
belongs to $\Xcal_{p_2}$, we observe that
\begin{align*}
|f(Y)-f(\Yt)| \le \vr_N(Y,\Yt) &\le N\vr(Y,\Yt) \\
&\le N\int_0^1 e^{\frac{1}{2}\Psi(\gamma_*(s))}\d s \|Y-\Yt \|_{\Xcal_{p_2}}.
\end{align*}
In the above, we recall $\Psi(x,v,\eta)=\Phi(x,v)+\|\eta\|_{p_2}^{p_2}$ defined in \eqref{form:Psi}. It follows that
\begin{align*}
\frac{|f(Y)-f(\Yt)|}{\|Y-\Yt \|_{\Xcal_{p_2}}} \le  N\int_0^1 e^{\frac{1}{2}\Psi(\gamma_*(s))}\d s \to N e^{\frac{1}{2}\Psi(Y)}
\end{align*}
as $\Yt\to Y$ in $\Xcal_{p_2}$ by virtue of the Dominated Convergence Theorem. As a consequence,
\begin{align*}
\|Df(Y)\|_{\Xcal_{p_2}^*} \le N e^{\frac{1}{2}\Psi(Y)},\quad Y\in\Xcal_{p_2}. 
\end{align*}
It follows that
\begin{align*}
P_t \|Df\|^2_{\Xcal_{p_2}^*}(x,v,\eta)  & = \E\|Df(x(t),v(t),\eta(t))\|^2_{\Xcal_{p_2}^*} \\
&\le N^2 \E e^{\Psi(x(t),v(t),\eta(t))}\\
&\le C\,N^2 e^{\Psi(x,v,\eta)},
\end{align*}
where in the last implication above, we invoke \eqref{ineq:E.e^Phi(t)<C.e^(-ct).e^Phi(0)+C} with some positive constant $C$ independent of $N$ and $(x,v,\eta)$. Together with the asymptotic strong Feller property \eqref{ineq:droplet:asymptotic-strong-Feller}, we deduce that
\begin{align*}
&\|DP_t f(x,v,\eta)\|_{\Xcal_{p_2}^*}   \nt \\
&\le Ce^{-ct}\sqrt{P_t \|Df\|^2_{\Xcal_{p_2}^*}(x,v,\eta)}+ C e^{\frac{1}{2}\Psi(x,v,\eta)}\sqrt{P_t|f|^2(x,v,\eta)}\\
&\le C e^{-ct}\cdot  N  e^{\frac{1}{2}\Psi(x,v,\eta)}+ C e^{\frac{1}{2}\Psi(x,v,\eta)}\sqrt{P_t|f|^2(x,v,\eta)}\\
&\le  N  e^{\frac{1}{2}\Psi(x,v,\eta)}\Big[Ce^{-ct} +\frac{C}{N}  \Big].
\end{align*} 
We emphasize again that on the above right-hand side, $C$ and $c$ are independent of $N$, $t$ and $(x,v,\eta)$. By choosing $t$ and $N$ sufficiently large such that
\begin{align*}
Ce^{-ct} +\frac{C}{N}  <\frac{1}{2},
\end{align*}
we obtain
\begin{align} \label{ineq:|DP_t.f(X)|<1/2.N.e^(Psi(X))}
\|DP_t f(x,v,\eta)\|_{\Xcal_{p_2}^*}   \le \frac{1}{2} N  e^{\frac{1}{2}\Psi(x,v,\eta)}.
\end{align}

Now, returning to \eqref{ineq:varrho_N:contracting:P_t}, letting $\gamma\in C([0,1];\Xcal_{p_2})$ be an arbitrary path connecting $X$ and $\Xt$, we recast the left-hand side of \eqref{ineq:varrho_N:contracting:P_t} as
\begin{align*}
P_tf(X)-P_tf(\Xt) = \int_0^1 \la DP_tf(\gamma(s)),\gamma'(s)\ra_{\Xcal_{p_2}}\d s.
\end{align*}
Using H\"{o}lder's inequality and \eqref{ineq:|DP_t.f(X)|<1/2.N.e^(Psi(X))}, we infer that
\begin{align*}
|P_tf(X)-P_tf(\Xt) | &\le \int_0^1 \|DP_tf(\gamma(s))\|_{\Xcal_{p_2}^*} \|\gamma'(s)\|_{\Xcal_{p_2}}\d s\\
& \le  \frac{1}{2}N\int_0^1 e^{\frac{1}{2}\Psi(\gamma(s))}\|\gamma'(s)\|_{\Xcal_{p_2}}\d s.
\end{align*}
Equation \eqref{ineq:varrho_N:contracting:P_t}  then follows from the definition of the distance $\varrho$ given in \eqref{form:varrho}. In turn, this establishes the contracting property \eqref{ineq:varrho_N:contracting}, as claimed, for all pairs $(X,\Xt)$ for which $\varrho_N(X,\Xt)<1$.

2. With regard to the $\vr_N$-small property \eqref{ineq:varrho_N:dsmall}, let $r\in(0,\frac{1}{2})$ be given and chosen later. Since the set $B_r$ defined in \eqref{form:B_r} is bounded away from the origin in $\Xcal_{p_2}$, we note that for every $Z_0,\Zt_0\in  B_r$, the path
\begin{align*}
\gamma_* = s\cdot Z_0+(1-s)\cdot \Zt_0\quad \text{for}\quad 0\leq s\leq 1
\end{align*}
is guaranteed to belong to $C^1([0,1];B_r)$. Indeed, by the triangle inequality,
\begin{align*}
&|s\cdot \pi_x Z_0+(1-s)\cdot \pi_x \Zt_0-e_1|+ |s\cdot \pi_v Z_0+(1-s)\cdot \pi_v \Zt_0|+\|s\cdot \pi_\eta Z_0 +(1-s)\cdot \pi_\eta \Zt_0\|_{p_2} \\
&\le s\big( |\pi_x Z_0-e_1 |+|\pi_v Z_0|+\|\pi_\eta Z_0\|_{p_2}\big) + (1-s)  \big(|\pi_x \Zt_0-e_1 |+|\pi_v \Zt_0|+\|\pi_\eta \Zt_0\|_{p_2}\big) \le r.
\end{align*}
 In particular, this implies that
\begin{align*} 
\vr(Z_0,\Zt_0) \le \int_0^1 e^{\frac{1}{2}\Psi(\gamma_*(s))}\|\gamma_*'(s)\|_{\Xcal_{p_2}}\d s\le \sup_{Z\in B_r}e^{\frac{1}{2}\Psi(Z)} \cdot \|Z_0-\Zt_0\|_{\Xcal_{p_2}} \le C r,
\end{align*}
for some positive constant $C$ that does not depend on $r\in (0,\frac{1}{2})$. In other words, 
\begin{align} \label{ineq:varrho(Z,Zt)}
\sup_{Z_0,\Zt_0\in B_r}\vr(Z_0,\Zt_0) \le  C r,\quad r\in \left(0,\frac{1}{2}\right).
\end{align}

Returning to \eqref{ineq:varrho_N:dsmall}, given $X,\Xt\in A_R$, we denote by $(Y,\Yt)$ a coupling of $\big( P_t(X,\cdot),P_t(\Xt,\cdot) \big)$ such that $Y$ and $\Yt$ are independent. By the definition \eqref{form:W_d} and the expression \eqref{form:varrho_N}, we have the following chain of implications:
\begin{align*}
&\W_{\vr_N}\big( P_t(X,\cdot) ,P_t(\Xt,\cdot)  \big)\\
& \le \E \vr_N(Y,\Yt)  = \E \big[N\vr(Y,\Yt) \mi 1 \big]\\
&\le \E\Big[\big( N\vr(Y,\Yt) \mi 1\big)\boldsymbol{1}\big\{ \{Y\in B_r\}\cap \{\Yt\in B_r\} \big\}\Big] \\
&\qquad+ \E\Big[\big( N\vr(Y,\Yt) \mi 1\big)\boldsymbol{1}\big\{ \{Y\notin B_r\}\cup \{\Yt\notin B_r\} \big\}\Big]\\
&\le N \sup_{Z_0,\Zt_0\in B_r}\vr(Z_0,\Zt_0) \cdot \P\big(\{Y\in B_r\}\cap \{\Yt\in B_r\}\big) + \P\big(\{Y\notin B_r\}\cup \{\Yt\notin B_r\}\big).
\end{align*}
We invoke \eqref{ineq:varrho(Z,Zt)} to further estimate
\begin{align*}
\W_{\vr_N}\big( P_t(X,\cdot) ,P_t(\Xt,\cdot)  \big)  \le N Cr\cdot \P\big(\{Y\in B_r\}\cap \{\Yt\in B_r\}\big)+ 1- \P\big(\{Y\in B_r\}\cap \{\Yt\in B_r\}\big).
\end{align*}
Since $N$ and $C$ are independent of $r\in(0,\frac{1}{2})$, we pick $r$ sufficiently small such that $NCr<\frac{1}{2}$ and obtain
\begin{align*}
\W_{\vr_N}\big( P_t(X,\cdot) ,P_t(\Xt,\cdot)  \big)  &\le 1-\frac{1}{2}\P\big(\{Y\in B_r\}\cap \{\Yt\in B_r\}\big)\\
&= 1- \frac{1}{2} \P\big(Y\in B_r\big) \P\big(\Yt\in B_r\big),
\end{align*}
where the last identity follows from the choice of $Y$ and $\Yt$ being independent. In light of the irreducibility condition \eqref{ineq:irreducible}, we infer the existence of a time $t_2=t_2(R,r)$ such that for all $t\ge t_2$
\begin{align*}
\inf_{X\in A_R}P_t(X,B_r)>c=c(t)>0,
\end{align*}
whence
\begin{align*}
\W_{\vr_N}\big( P_t(X,\cdot) ,P_t(\Xt,\cdot)  \big)  &\le 1- \frac{1}{2} \P\big(Y\in B_r\big) \P\big(\Yt\in B_r\big) \le 1-\frac{1}{2}c^2.
\end{align*}
This produces \eqref{ineq:varrho_N:dsmall}, thereby finishing the proof.

\end{proof}

\section{Numerical simulations of the invariant measure}\label{sec:numerics}

We conclude the paper by presenting numerical examples for the invariant measure obtained from the stroboscopic model in dimension $d=2$, specifically, for which the walker evolves under the influence of an attractive harmonic potential $U(x) = |x|^2/2$ and repulsive Coulomb potential $G(x) = -\alpha\log(|x|)$. The purpose of this section is to give the reader an intuition about the qualitative structure of the walker's invariant measure, the existence and uniqueness of which we established in the preceding sections by proving Theorem~\ref{thm:geometric-ergodicity}. To that end, we employ the functional forms of the pilot-wave force $H(x) = \mathrm{J}_1(|x|)x/|x|$ (c.f. Remark~\ref{rem:U}) and memory kernel $K(t) = \mathrm{e}^{-t}$ (c.f. Remark~\ref{rem:K}), fixing the dimensionless walker mass $m=1$ and noise strength $\sigma = 1$. The forms of $H(x)$ and $K(t)$ are motivated by prior work on the topic~\cite{Eddi2011a,Molacek2013b}, in which approximations were derived for the spatiotemporal evolution of the wave field generated by a walker bouncing on a vertically vibrating fluid bath. 

Equation~\eqref{eqn:droplet:original} is solved numerically using an Euler-Maruyama time-stepping scheme with time step $\Delta t = 2^{-6}$ up to a final time $t_{\text{max}} = 10^4$, with the integral computed using the trapezoidal rule. To initialize the simulations, we set the {\it initial past} corresponding to the stationary state $x(t)=(2\pi,0)$ for $t < 0$. We consider three different values of $\alpha$: $\alpha=1$, 3 and 5. Recall from Remark~\ref{rem:G} that $\alpha=3$ is the ``edge case": while we have strictly speaking proven Theorem~\ref{thm:geometric-ergodicity} under the assumption $\alpha \geq 3$, we expect that the theorem is true for all values of $\alpha$. Indeed, the case $\alpha=0$, in which there is no singular potential, was covered by our prior work~\cite{nguyen2024invariant}. 

The results of the numerical simulations are shown in Fig.~\ref{Fig:Numerics}. The first two columns show the numerical solutions, while the last column shows the walker's radial position probability density function $p(r)$, where $r = |x|$. Panels (a) through (d) correspond to $\alpha=1$, (e) through (h) to $\alpha=3$ and (i) through (l) to $\alpha=5$. It is evident from the leftmost panels that the walker executes an erratic trajectory in a roughly annular region in the plane, with the radius of the inner circle increasing with the strength $\alpha$ of the repulsive potential. In all three cases, the probability density function $p(r)$ eventually reaches a steady state, which we ascertained by doubling the simulation time $t_{\text{max}}$ and verifying that $p(r)$ remained virtually unchanged. We make the empirical observation that an invariant measure seems to exist even for $\alpha < 3$, and that its form does not change qualitatively in the neighborhood of $\alpha = 3$. Our simulations thus suggest that condition~\eqref{cond:G:e^G>grad^2.G} is a technical condition that likely could be relaxed in future work.

For the values of $m$, $\sigma$, $\alpha$, and the forms of the potential $U(x)$ and pilot-wave force $H(x)$ shown here, the invariant measure $p(r)$ evidently has a relatively simple form with a single peak roughly located at $r\approx \sqrt{\alpha}$, the distance from the origin at which the attractive and repulsive forces on the walker are balanced, $r\approx \alpha/r$. In future work, we will seek to characterize the dependence of the form  of $p(r)$ on the strengths of the regular and singular potentials, as well as the walker mass and noise strength.

\begin{figure}[ht]
\begin{center}
\includegraphics[width=1\textwidth]{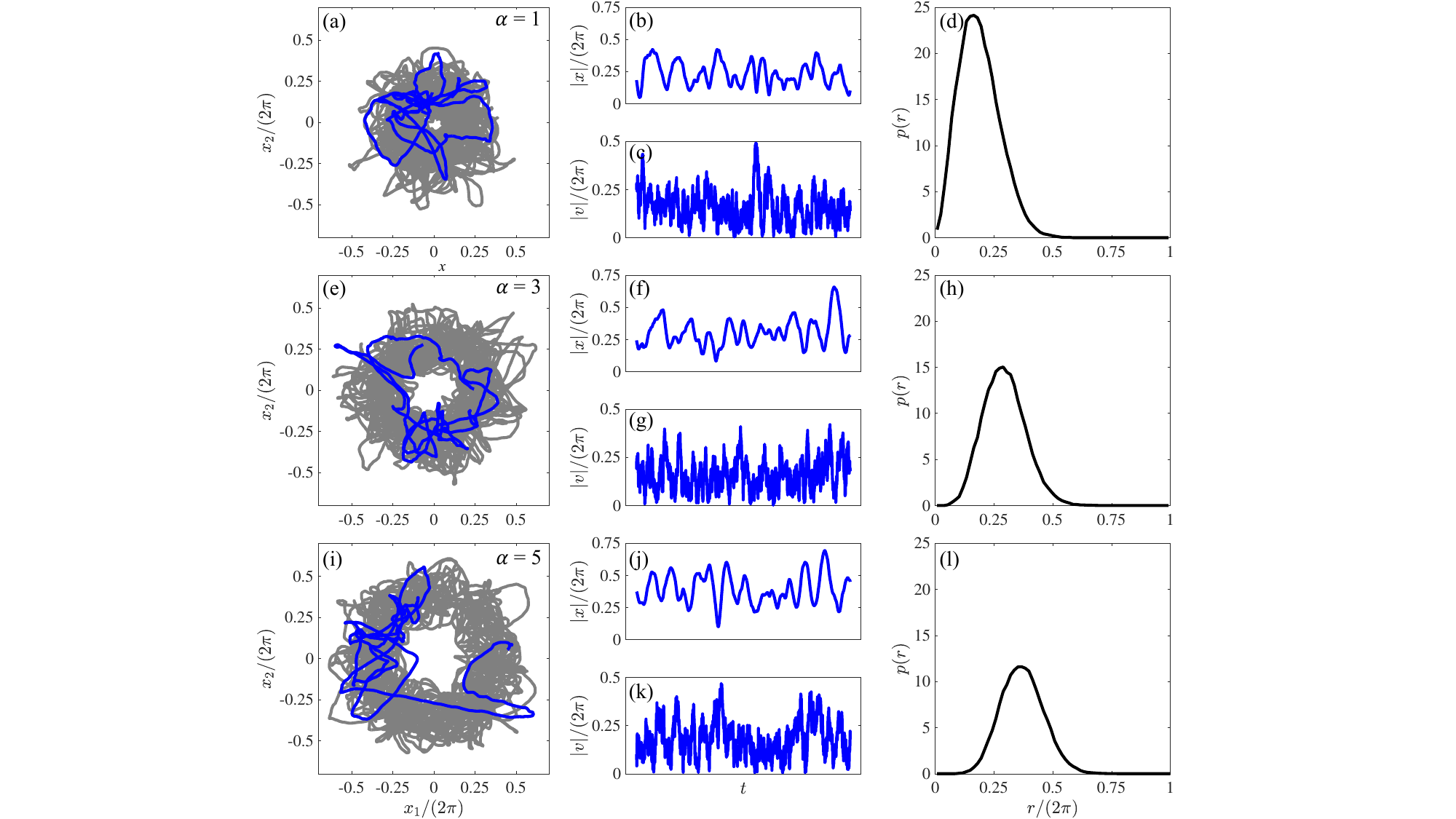}
\caption{Numerical simulations of Eq.~\eqref{eqn:droplet:original} for $m = 1$, $\sigma=1$, $U(x) = |x
|^2/2$ and $G(x) = -\alpha\log(|x|)$, for $\alpha=1$ (panels (a) through (d)), $\alpha=3$ (panels (e) through (h)) and $\alpha=5$ (panels (i) through (l)). In the leftmost panels, the gray curves show the two-dimensional walker position $x(t)=(x_1(t),x_2(t))$ over the time interval $t_{\text{max}} -1000 < t < t_{\text{max}}$. The blue curve highlights a subset of this trajectory over the interval $t_{\text{max}} - 50 < t < t_{\text{max}}$. The corresponding time series for the radius $r=|x(t)|$ and speed $|v(t)|$ are shown in the middle panels. The rightmost panels show the walker's radial position probability density function $p(r)$. The variables are divided by $2\pi$, which corresponds to the approximate periodicity of the pilot-wave force function $H(x)=\text{J}_1(x)x/|x|$.
}
 \label{Fig:Numerics}
 \end{center}
 \end{figure}
 
\section*{acknowledgment} HN acknowledges support from a start-up fund by the University of Tennessee Knoxville. AO acknowledges support from NSF DMS-2108839 and DMS-2510304. The authors would like to thank anonymous referees for their providing a thorough review of this work. We appreciate their careful reading and insightful comments, which have improved the manuscript.

\section*{conflict of interest}

The authors have no conflicts of interest to declare that are relevant to the content of this article.

\section*{Data availability}
Numerical simulation data is available upon request.

\appendix

\section{Auxiliary results}

In this section, we collect suitable estimates on the auxiliary system \eqref{eqn:Langevin:droplet} through Lemma \ref{lem:Langevin:E.exp(sup.Phi(x,v))} and Lemma \ref{lem:irreducible:control-xtilde}. In particular, the former establishes an exponential moment bound that is directly employed in the proof of Proposition \ref{prop:irreducible}, whereas the latter provides a control argument which appears in the proof of Lemma \ref{lem:irreducible:Langevin}.

\begin{lemma} \label{lem:Langevin:E.exp(sup.Phi(x,v))} Let $(x_1,v_1)$ be the solution of the Langevin system \eqref{eqn:Langevin:droplet:x}-\eqref{eqn:Langevin:droplet:v} with initial data $(x,v)\in \rbb^d\setminus\{0\}\times \rbb^d$. Then, for all $\beta>0$ sufficiently small
\begin{align} \label{ineq:Langevin:E.exp(sup.Phi(x,v))}
\E\exp\Big\{\beta \sup_{s\in[0,t]} \Phi(x_1(s),v_1(s))   \Big\} \le  2\exp \big\{\beta \Phi(x,v)+C\beta\,t\big\},\quad t\ge 0.
\end{align}
In the above, $\Phi$ is defined in \eqref{form:Phi} and $C$ is a positive constant independent of $\beta$ and $t$.

\end{lemma}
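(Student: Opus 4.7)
The plan is to combine It\^o's formula, the drift bounds from the proof of Lemma \ref{lem:moment-bound}, and Doob's $L^2$ maximal inequality applied to an exponential martingale. Applying It\^o to $\Phi(x_1(s),v_1(s))$ along \eqref{eqn:Langevin:droplet:x}--\eqref{eqn:Langevin:droplet:v} yields
\[
\Phi(x_1(s),v_1(s)) = \Phi(x,v) + \int_0^s \L_1\Phi(x_1(r),v_1(r))\,\d r + M_0(s),
\]
where $\L_1$ is the generator of the Langevin system and $M_0(s)=\int_0^s\la \partial_v\Phi,\d W(r)\ra$. Specializing the computations in the proof of Lemma \ref{lem:moment-bound} part 1 to the case $H\equiv 0$ with $\eta$ absent produces $\L_1\Phi \le -c_0\Phi+C_0$, and since $\Phi$ is bounded from below (after a constant shift we may assume $\Phi\ge 0$) we obtain the uniform upper bound $\L_1\Phi\le C_0$. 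A direct differentiation of \eqref{form:Phi} also gives the pointwise estimate $|\partial_v\Phi(x,v)|^2 \le C_1(\Phi(x,v)+1)$.

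The upper bound on $\L_1\Phi$ immediately implies $\sup_{s\le t}\Phi(x_1(s),v_1(s)) \le \Phi(x,v) + C_0 t + \sup_{s\le t}M_0(s)$, so \eqref{ineq:Langevin:E.exp(sup.Phi(x,v))} reduces to establishing $\E\exp\{\beta\sup_{s\le t}M_0(s)\}\le 2\exp\{C'\beta t\}$ for $\beta$ small. Introducing the exponential martingale $\mathcal{E}(s)=\exp\{\beta M_0(s)-\tfrac{\beta^2}{2}\la M_0\ra(s)\}$, which is a genuine (not merely local) martingale for $\beta$ sufficiently small via Novikov's criterion combined with \eqref{ineq:E.e^Phi(t)<C.e^(-ct).e^Phi(0)+C} specialized to the Langevin case, the identity $e^{\beta M_0(s)}=\mathcal{E}(s)e^{\frac{\beta^2}{2}\la M_0\ra(s)}$ together with monotonicity of $\la M_0\ra$, Cauchy--Schwarz, and Doob's $L^2$ maximal inequality applied to $\mathcal{E}$ yield
\[
\E\exp\{\beta\sup_{s\le t}M_0(s)\} \le 2\sqrt{\E\mathcal{E}(t)^2}\cdot\sqrt{\E\exp\{\beta^2\la M_0\ra(t)\}}.
\]

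Both factors on the right-hand side reduce, via $\la M_0\ra(t)\le C_1\int_0^t(\Phi+1)\d r$, to a Khasminskii-type estimate $\E\exp\{\gamma\int_0^t\Phi(x_1(r),v_1(r))\d r\}\le K(\gamma)\exp\{C''\gamma t\}$ with $\gamma$ of order $\beta^2$, which in turn follows from $\L_1\Phi\le -c_0\Phi+C_0$ through a stopping-time argument analogous to the derivation of \eqref{ineq:d.exp(Phi+eta)}. The main obstacle is tracking multiplicative constants so that the final prefactor equals exactly $2$: this forces $\beta$ to be taken sufficiently small that the Khasminskii prefactor $K(\gamma)$, after the two square roots above, collapses to a value arbitrarily close to $1$, leaving only the factor $2$ from Doob's $L^2$ inequality. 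Once this is carried out, absorbing the residual $O(\beta^2 t)$ and $C_0\beta t$ contributions into a single exponent $C\beta t$ produces \eqref{ineq:Langevin:E.exp(sup.Phi(x,v))}.
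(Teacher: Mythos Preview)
Your approach has a structural issue that prevents it from yielding the exact inequality \eqref{ineq:Langevin:E.exp(sup.Phi(x,v))}. The step where you pass from $\L_1\Phi \le -c_0\Phi+C_0$ to the cruder bound $\L_1\Phi \le C_0$ discards precisely the term needed to control the quadratic variation $\langle M_0\rangle$. Recovering that control afterwards through a Khasminskii-type estimate on $\E\exp\{\gamma\int_0^t\Phi\,\d r\}$ necessarily reintroduces a prefactor of the form $\exp\{(\gamma/c_0)\Phi(x,v)\}$ with $\gamma=O(\beta^2)$; after collecting terms you obtain at best $\E e^{\beta\sup\Phi}\le 2\exp\{(\beta+c\beta^2)\Phi(x,v)+C\beta t\}$ rather than the claimed bound with coefficient exactly $\beta$ in front of $\Phi(x,v)$. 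Your assertion that $K(\gamma)$ ``collapses to a value arbitrarily close to $1$'' holds only for fixed initial data, not uniformly, so the excess $c\beta^2\Phi(x,v)$ cannot be absorbed into $C\beta t$ (certainly not for small $t$). Separately, the Doob step is not innocuous: $\E\mathcal{E}(t)^2=\E[\mathcal{E}_{2\beta}(t)\,e^{\beta^2\langle M_0\rangle(t)}]$ already requires exponential moments of $\langle M_0\rangle$, and iterating Cauchy--Schwarz here does not close.

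The paper's argument is shorter and sidesteps this entirely by \emph{not} discarding the negative drift. Since $|\partial_v\Phi|^2\le c'(|v|^2+|x|^{q_0}+1)$ while $\L_1\Phi\le -c(|v|^2+|x|^{q_0}+|x|^{-\beta_1})+C$, one obtains directly
\[
\d(\beta\Phi)\le C\beta\,\d t+\d M_1-\tfrac{c}{\beta}\,\d\langle M_1\rangle,\qquad M_1=\beta M_0.
\]
Taking the supremum in $s$ and applying the exponential martingale inequality $\P\big(\sup_s[M_1-\tfrac{\lambda}{2}\langle M_1\rangle]\ge R\big)\le e^{-\lambda R}$ with $\lambda=2c/\beta$ then gives $\E\exp\{\sup_s[M_1-\tfrac{c}{\beta}\langle M_1\rangle]\}\le 2$ for $\beta$ small, and \eqref{ineq:Langevin:E.exp(sup.Phi(x,v))} follows immediately with the exact coefficient $\beta$. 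The conceptual point is that the dissipation in $\L_1\Phi$ should be used to absorb the quadratic variation \emph{before} passing to the supremum, not recovered afterwards.
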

\begin{proof}
Denote by $\L_1$ the Kolmogorov operator associated with \eqref{eqn:Langevin:droplet:x}-\eqref{eqn:Langevin:droplet:v}, namely,
\begin{align*} 
\L_1 g &:= \la v,\partial_x g\ra+\big\la -v-\grad U(x)- \grad G(x(t)),\partial_v g\big\ra +\frac{1}{2} \triangle_v g .
\end{align*}
Similarly to the proof of Lemma \ref{lem:moment-bound}, part 1., cf. \eqref{ineq:d.Phi(t)}, we apply $\L_1$ to $\Phi(x,v)$ and obtain the estimate
\begin{align*}
\L_1\Phi(x,v) \le -c\Big( |v|^2 +|x|^{q_0}+|x|^{-\beta_1}\Big) + C.
\end{align*}
In particular, for all $\beta>0$, It\^o's formula yields
\begin{align*}
\d \big(\beta\Phi(x_1(t),v_1(t))\big) & =  \beta\L_1\Phi(x_1(t),v_1(t))\d t+\d M_1(t)\\
& \le  -c\beta \Big( |v_1(t)|^2 +|x_1(t)|^{q_0}+|x_1(t)|^{-\beta_1}\Big)\d t+ C\beta\d t+ \d M_1(t),
\end{align*}
where the semi-Martingale process $M_1(t)$ is defined as
\begin{align} \label{form:M_1(t)}
M_1(t) =\beta\int_0^ t\Big\la v_1(r)+\kappa x_1(r)+\frac{x_1(r)}{|x_1(r)|} , \d W(r) \Big\ra,
\end{align}
whose quadratic variation process is given by
\begin{align} \label{form:M_1(t):quadratic-variation}
\la M_1\ra(t)=\beta^2\int_0^t \Big|v_1(r)+\kappa x_1(r)+\frac{x_1(r)}{|x_1(r)|}\Big|^2\d r.
\end{align}
Since $q_0\ge 2$, cf. condition \eqref{cond:q_0}, we note that
\begin{align*}
\d\la M_1\ra(t) \le c\beta^2 \big(|v_1(t)|^2 +|x_1(t)|^{q_0}+1\big)\d t.
\end{align*}
As a consequence, we obtain
\begin{align} \label{ineq:Langevin:d.Phi(x_1,v_1)}
\d\big( \beta\Phi(x_1(t),v_1(t))\big) \le  C\beta\d t+ \d M_1(t)-\frac{c}{\beta}\d \la M_1\ra(t),
\end{align}
for some positive constants $C,c$ independent of $\beta$. Next, we recall the exponential Martingale inequality:
\begin{align*}
\P\left( \sup_{t\ge 0}\left[M_1(t)-\frac{1}{2}\cdot \lambda\cdot \la M_1\ra(t)\right]\ge R \right) \le e^{-\lambda R},\quad \lambda>0,\, R>0.
\end{align*}
Based on the right-hand side of \eqref{ineq:Langevin:d.Phi(x_1,v_1)}, we choose $\lambda=c/\beta$ and observe that
\begin{align*}
\P\left( \exp\left\{\sup_{t\ge 0}\left[M_1(t)-\frac{c}{\beta} \la M_1\ra(t)\right]\right\}\ge R \right) \le R^{-2c/\beta},\quad  R>0.
\end{align*}
By taking $\beta$ sufficiently small, the above estimate implies
\begin{align*}
&\E\exp\left\{\sup_{t\ge 0}\left[M_1(t)-\frac{c}{\beta} \la M_1\ra(t)\right]\right\}\\
& =\int_0^\infty \P\left( \exp\left\{\sup_{t\ge 0}\left[M_1(t)-\frac{c}{\beta} \la M_1\ra(t)\right]\right\}\ge R \right)\d R \le 2.
\end{align*}
It follows from \eqref{ineq:Langevin:d.Phi(x_1,v_1)} that
\begin{align*}
\E \exp \Big\{\sup_{s\in[0,t]}\beta \Phi(x_1(s),v_1(s))    \Big\} \le 2 \exp \big\{\beta \Phi(x,v)+C\beta \,t    \big\}.
\end{align*}
This produces \eqref{ineq:Langevin:E.exp(sup.Phi(x,v))}, as claimed.

\end{proof}

\begin{lemma} \label{lem:irreducible:control-xtilde}  

Given $R>2, t>1$, $r\in(0,1)$ and $(x_0,v_0)\in\rbb^d\setminus\{0\}\times \rbb^d$ such that
\begin{align*}
|x_0|+|x_0|^{-1}+|v_0|\le R,
\end{align*}
there exists a triple $(\xt_1,\vt_1,\Gamma)$ satisfying the deterministic control problem \eqref{eqn:Langevin:droplet:xtile} with
\begin{align} \label{cond:xtile_1:boundary}
(\xt_1(0),\vt_1(0)) = (x_0,v_0),\quad (\xt_1(t),\vt_1(t))=(e_1,0),
\end{align}
and 
\begin{align} \label{cond:int_0^t|xtile_1|<r}
\int_0^t |\xt_1(s)|^{p_2}\emph{d} s < r,
\end{align}
where $e_1=(1,0,\dots,0)$ is a unit element in $\rbb^d$.

\end{lemma}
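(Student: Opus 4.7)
The plan is to prescribe the curve $\tilde x_1 \in C^2([0,t]; \rbb^d \setminus \{0\})$ directly and then recover $\Gamma$ by inverting the ODE. Setting $\tilde v_1 := \tilde x_1'$, integration of the $\tilde v_1$-equation yields
\begin{equation*}
\Gamma(s) = \tilde x_1'(s) - v_0 + \int_0^s \big[\tilde x_1'(r) + \grad U(\tilde x_1(r)) + \grad G(\tilde x_1(r))\big] \d r,
\end{equation*}
which defines a $C^1$ function on $[0,t]$ as long as $\tilde x_1 \in C^2$ and $\tilde x_1(r) \neq 0$ throughout; the extension of $\Gamma$ to $[0,\infty)$ is then done affinely past time $t$. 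Hence the problem reduces to building $\tilde x_1 \in C^2([0,t];\rbb^d\setminus\{0\})$ satisfying the prescribed endpoint values for position and velocity and the integral bound $\int_0^t |\tilde x_1|^{p_2} \d s < r$.

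I construct $\tilde x_1$ as a three-segment concatenation with parameters $\delta, \tau_1, \tau_2 > 0$ to be chosen. On $[0,\tau_1]$, take a smooth (e.g.\ quintic Hermite) interpolant from $(x_0, v_0)$ to $(\delta e_1, 0)$ with vanishing acceleration at both endpoints, traced along a path that stays in the ball of radius $2R$ and remains separated from $0$ by at least $\delta/2$. On $[\tau_1, t-\tau_2]$, set $\tilde x_1 \equiv \delta e_1$. On $[t-\tau_2, t]$, take the analogous quintic Hermite interpolant from $(\delta e_1, 0)$ to $(e_1, 0)$, parametrized along the segment $\{s e_1 : s \in [\delta, 1]\}$; this keeps $|\tilde x_1| \in [\delta, 1]$ throughout. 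Vanishing of velocity and acceleration at each interior junction ensures global $C^2$ regularity.

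The three pieces contribute to $\int_0^t |\tilde x_1|^{p_2} \d s$ at most $(2R)^{p_2}\tau_1$, $\delta^{p_2}(t-\tau_1-\tau_2)$, and $\tau_2$ respectively. Choosing, in order, $\tau_2 = r/4$, $\delta = (r/(4t))^{1/p_2}$, and $\tau_1 > 0$ small enough that both $(2R)^{p_2}\tau_1 < r/4$ and the geometric requirements on Segment~1 hold, yields $\int_0^t |\tilde x_1|^{p_2} \d s \le 3r/4 < r$. Since $t > 1$ and $r \in (0,1)$, these choices are simultaneously consistent.

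The main obstacle is ensuring that the Segment~1 interpolant stays uniformly away from the origin while keeping its image contained in a ball of controlled radius, even when $|v_0|$ may be as large as $R$ and may point toward the origin. The standard fix is to take $\tau_1$ small enough that $\tau_1 |v_0| \le \tau_1 R$ is much smaller than $|x_0| \ge 1/R$, and to first decelerate $v_0$ to zero over a very short initial sub-interval before interpolating toward $\delta e_1$; on this sub-interval the Hermite polynomial stays in a ball of radius $|x_0| + O(\tau_1 R)$ about $x_0$, hence uniformly bounded below away from $0$. All smallness requirements on $\tau_1$ can be accommodated simultaneously, completing the construction.
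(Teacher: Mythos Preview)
Your overall strategy---prescribe a $C^2$ path $\tilde x_1$ avoiding the origin, then recover $\Gamma$ from the ODE---is exactly what the paper does, and your three-segment layout (approach a small point $\delta e_1$, hold, ramp up to $e_1$) is a minor variation of the paper's four-segment version (approach $e_1$, shrink to $\varepsilon e_1$, hold, ramp back to $e_1$). The integral estimate and the parameter choices are fine.

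There is, however, a genuine gap in your origin-avoidance argument for Segment~1. You correctly note that a large $v_0$ pointing toward the origin can be neutralized by decelerating to zero velocity over a very short initial sub-interval, keeping the trajectory in a small ball about $x_0$. But after that, you still have to interpolate from a point near $x_0$ (now with zero velocity) to $\delta e_1$, and a quintic Hermite interpolant between two points with zero velocity and zero acceleration at the endpoints traces the straight segment between them. If $x_0$ happens to be a negative multiple of $e_1$ (which is allowed in dimension $d\ge 2$), that segment passes through the origin, and your claimed lower bound $|\tilde x_1|\ge \delta/2$ fails. Your last paragraph addresses only the velocity obstacle, not this geometric one.

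The paper handles precisely this case by splitting according to whether $x_0$ is a scalar multiple of $e_1$: when it is not, the convex combination $x_0(1-r)+e_1 r$ is bounded away from $0$ uniformly in $r\in[0,1]$, so a short-time cubic spline works; when $x_0\parallel e_1$ and $d\ge 2$, the paper first routes through $e_2$ and then from $e_2$ to $e_1$ along the arc $(\cos\theta,\sin\theta,0,\dots,0)$, which stays on the unit sphere. Your construction is easily repaired by inserting the same detour (or any smooth path in $\rbb^d\setminus\{0\}$ connecting $x_0$ to $\delta e_1$, which exists since $\rbb^d\setminus\{0\}$ is path-connected for $d\ge 2$), but as written the argument is incomplete.
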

\begin{proof} There are two cases to be considered depending on the relationship between $x_0$ and $e_1$: 

{\bf Case 1}: $x_0$ is not a multiple of $e_1$, i.e.,
\begin{align*}
x_0\notin \{z\in\rbb^d:z=ce_1\text{ for some }c\in\rbb\}.
\end{align*}
Letting $\varepsilon$ small be given and chosen later, we first consider the time interval $[0,\varepsilon]$ and construct a pair $(\xt_1,\vt_1)$ on this interval satisfying
\begin{align*} 
\dds\xt_1(s)= \vt_1(s),\quad 0\le s\le \varepsilon,
\end{align*}
with boundary conditions
\begin{align*} 
\xt_1(0)&=x_0,\quad \xt_1(\varepsilon)=e_1,\nt \\
\vt_1(0)&=v_0,\quad \vt_1(\varepsilon)=0. 
\end{align*}
To do so, letting $y\in \rbb^d\setminus\{0\}$ and $a\in(0,\varepsilon)$ be given and chosen later, we consider a path $\vt_1\in C([0,\varepsilon];\rbb^d)$ defined as
\begin{align*}
\vt_1(s) = y\frac{s(s-\varepsilon)}{a(a-\varepsilon)}+v_0\frac{(s-a)(s-\varepsilon)}{a\varepsilon},\quad 0\le s\le \varepsilon.
\end{align*}
Note that with the above choice, we readily have
\begin{align*}
\vt_1(0)=v_0,\quad \vt_1(\varepsilon)=0.
\end{align*}
Setting
\begin{align*}
\xt_1(s)&=x_0+\int_0^s \vt_1(s)\d s,
\end{align*}
a routine calculation yields
\begin{align*}
\xt_1(s)&=x_0+\int_0^s y\frac{s(s-\varepsilon)}{a(a-\varepsilon)}+v_0\frac{(s-a)(s-\varepsilon)}{a\varepsilon}\d s\\
&= x_0 + \frac{y}{a(a-\varepsilon)}\Big(\frac{s^3}{3}-\frac{\varepsilon s^2}{2}\Big) +\frac{v_0}{a\varepsilon}\Big(\frac{s^3}{3}-(a+\varepsilon)\frac{s^2}{s} +a\varepsilon s  \Big).
\end{align*}
Picking 
\begin{align*}
a=\frac{\varepsilon}{2},\quad \text{and}\quad y = \Big(e_1-x_0-\frac{\varepsilon}{6}v_0\Big)\frac{3}{2\varepsilon},
\end{align*}
we note that $\xt_1(\varepsilon)=e_1$. Furthermore, we obtain the following expressions for $s\in[0,\varepsilon]$:
\begin{align} \label{form:xtilde_1(s):case_1:[0,epsilon]}
\xt_1(s)&=x_0 - \Big(e_1-x_0-\frac{\varepsilon}{6}v_0\Big)\Big(2\frac{s^3}{\varepsilon^3}-3\frac{ s^2}{\varepsilon^2}\Big) +\frac{2v_0}{\varepsilon^2}\Big(\frac{s^3}{3}-\frac{3}{4}\varepsilon s^2  \Big)+v_0s, \nt \\
\vt_1(s)&= - \Big(e_1-x_0-\frac{\varepsilon}{6}v_0\Big)\frac{6}{\varepsilon}\Big(\frac{s^2}{\varepsilon^2}-\frac{ s}{\varepsilon} \Big)+2v_0\Big( \frac{s^2}{\varepsilon^2}-\frac{3}{2}\frac{s}{\varepsilon}\Big)+v_0. 
\end{align}
It is important to point out that the choice of $\xt_1$ in \eqref{form:xtilde_1(s):case_1:[0,epsilon]} satisfies that $\xt_1(s)\neq 0$ for $s\in[0,\varepsilon]$. To see this, we recast $\xt_1$ as follows:
\begin{align} \label{form:xtilde_1(s):case_1:[0,epsilon]:recast}
\xt_1(s)& = x_0\Big[1-\Big(3\frac{s^2}{\varepsilon^2}-2\frac{s^3}{\varepsilon^3}\Big)\Big]+e_1\Big[3\frac{s^2}{\varepsilon^2}-2\frac{s^3}{\varepsilon^3}\Big] +\varepsilon v_0\Big( \frac{s^3}{\varepsilon^3}-2\frac{s^2}{\varepsilon^2}+\frac{s}{\varepsilon}\Big).
\end{align}
On the one hand, since $s\in[0,\varepsilon]$ and $|v_0|<R$, it holds that
\begin{align*}
\Big|\varepsilon v_0\Big( \frac{s^3}{\varepsilon^3}-2\frac{s^2}{\varepsilon^2}+\frac{s}{\varepsilon}\Big) \Big|\le 4\varepsilon R.
\end{align*}
On the other hand, since in this case $x_0$ is not a multiple of $e_1$, we have
\begin{align*}
&\min_{s\in[0,\varepsilon]}\Big|x_0\Big[1-\Big(3\frac{s^2}{\varepsilon^2}-2\frac{s^3}{\varepsilon^3}\Big)\Big]+e_1\Big[3\frac{s^2}{\varepsilon^2}-2\frac{s^3}{\varepsilon^3}\Big]\Big|\\
&=  \min_{r\in[0,1]}|x_0(1-r)+e_1r|= c>0,
\end{align*}
for some positive constant $c=c(x_0,e_1)$ independent of $\varepsilon$. Taking $\varepsilon$ sufficiently small produces the bound
\begin{align*}
|\xt_1(s)|\ge c-4\varepsilon R>0,
\end{align*}
implying that $\xt_1(s)\neq 0$ for $0\le s\le \varepsilon$.

Next, let $\psi:\rbb\to\rbb$ be an arbitrarily smooth cut-off function satisfying
\begin{align} \label{form:psi_1}
\psi_1(t)=\begin{cases} \varepsilon,& t\le 0,\\
\text{monotone},& 0\le t\le 1,\\
1,& t\ge 1. \end{cases}
\end{align}
On the interval $[\varepsilon,t]$, we set
\begin{align} \label{form:xtilde_1(s):case_1:[epsilon,t]}
\xt_1(s) &=\begin{cases} \Big(1+\varepsilon-\psi_1\big(\frac{s}{\varepsilon}-1\big)\Big)e_1, & \varepsilon\le s\le 2\varepsilon,\\
\varepsilon e_1, &2\varepsilon\le s\le t-\varepsilon,\\
\psi_1\big(\frac{s-t+\varepsilon}{\varepsilon}\big)e_1,& t-\varepsilon\le s\le t.
\end{cases} \\
\vt_1(s)& =\xt_1'(s).\nt
\end{align}
From \eqref{form:xtilde_1(s):case_1:[0,epsilon]}--\eqref{form:xtilde_1(s):case_1:[epsilon,t]}, we see that $\xt_1\in C^1([0,t];\rbb)$ and $\vt_1\in C([0,t];\rbb)$ satisfy \eqref{cond:xtile_1:boundary}. 

Next, with regard to the condition \eqref{cond:int_0^t|xtile_1|<r}, we employ expression \eqref{form:xtilde_1(s):case_1:[0,epsilon]:recast} to see that
\begin{align*}
\int_0^\varepsilon |\xt_1(s)|^{p_2}\d s \le C \varepsilon R^{p_2},
\end{align*} 
for some positive constant $C$ independent of $\varepsilon$ and $R$. Also, from \eqref{form:xtilde_1(s):case_1:[epsilon,t]}, we have
\begin{align*}
\int_\varepsilon^t |\xt_1(s)|^{p_2}\d s &=\Big\{\int_\varepsilon^{2\varepsilon}+\int_{2\varepsilon}^{t-\varepsilon}+\int_{t-\varepsilon}^t\Big\}|\xt_1(s)|^{p_2}\d s\le  \varepsilon  +t\varepsilon^{p_2}\varepsilon\le 3\varepsilon t .
\end{align*}
It follows that
\begin{align*}
\int_0^t |\xt_1(s)|^{p_2}\d s \le C\varepsilon t R^{p_2}.
\end{align*}
By shrinking $\varepsilon$ further to zero if necessary, the above estimate produces \eqref{cond:int_0^t|xtile_1|<r}, as claimed.

Turning back to the control problem \eqref{eqn:Langevin:droplet:xtile}, it remains to pick a suitable control process $\Gamma$ so as to ensure that the triple $(\xt_1,\vt_1,\Gamma)$ satisfies \eqref{eqn:Langevin:droplet:xtile}. To this end, let $\Gamma$ be defined as
\begin{align} \label{form:Gamma}
\Gamma(s) = \vt_1(s)-v_0+\int_0^s \vt_1(\ell)+\grad U(\xt_1(\ell))+\grad G(\xt_1(\ell))\d \ell.
\end{align}
It is clear that $(\xt_1,\vt_1,\Gamma)$ solves \eqref{eqn:Langevin:droplet:xtile} on the time interval $[0,t]$, thereby finishing the proof in Case 1.

{\bf Case 2}: $x_0$ is a multiple of $e_1$. There are two sub cases depending on the dimension $d$.

\hspace{0.4cm} Case 2a: dimension $d=1$. Since in this case the initial condition satisfies $x_0>0$, we may employ the same argument as in Case 1 to obtain the triple $(\xt_1,\vt_1,\Gamma)$ satisfying \eqref{eqn:Langevin:droplet:xtile} as well as the conditions \eqref{cond:xtile_1:boundary} and \eqref{cond:int_0^t|xtile_1|<r}.

\hspace{0.4cm} Case 2b: dimension $d\ge 2$. In this case, denoting $e_2=(0,1,0,\dots,0)$, we observe that $x_0$ is not a multiple of $e_2$. Following the same procedure for \eqref{form:xtilde_1(s):case_1:[0,epsilon]}, we first drive $(x_0,v_0)$ at time 0 to $(e_2,0)$ at time $\varepsilon$. That is, we obtain the following path: 
 \begin{align} \label{form:xtilde_1(s):case_2b:[0,epsilon]}
\xt_1(s)&=x_0 - \Big(e_2-x_0-\frac{\varepsilon}{6}v_0\Big)\Big(2\frac{s^3}{\varepsilon^3}-3\frac{ s^2}{\varepsilon^2}\Big) +\frac{2v_0}{\varepsilon^2}\Big(\frac{s^3}{3}-\frac{3}{4}\varepsilon s^2  \Big)+v_0s, \nt \\
\vt_1(s)&= - \Big(e_2-x_0-\frac{\varepsilon}{6}v_0\Big)\frac{6}{\varepsilon}\Big(\frac{s^2}{\varepsilon^2}-\frac{ s}{\varepsilon} \Big)+2v_0\Big( \frac{s^2}{\varepsilon^2}-\frac{3}{2}\frac{s}{\varepsilon}\Big)+v_0,\quad 0\le s\le \varepsilon,
\end{align}
satisfying the boundary condition
\begin{align*} 
\xt_1(0)&=x_0,\quad \xt_1(\varepsilon)=e_2,\nt \\
\vt_1(0)&=v_0,\quad \vt_1(\varepsilon)=0. 
\end{align*}
On the interval $[\varepsilon,2\varepsilon]$, denoting
\begin{align*}
\psi_2(s)=  \frac{1}{2}\Big[ \cos \Big(\pi\frac{s}{\varepsilon}\Big)+1 \Big], \quad \varepsilon\le  s\le 2\varepsilon,
\end{align*}
 we choose
\begin{align} \label{form:xtilde_1(s):case_2b:[epsilon,2epsilon]}
\xt_1(s)&=\big( \psi_2(s) ,1-\psi_2(s),0\dots,0  \big),\\
\vt_1(s)&=\xt_1'(s),\quad \varepsilon\le s\le 2\varepsilon.\nt 
\end{align}
Observe that with this choice, 
\begin{align*}
\xt_1(2\varepsilon)=e_1,\quad \vt_1(2\varepsilon)=0.
\end{align*}
In other words, $(\xt_1,\vt_1)$ drives $(e_2,0)$ at time $\varepsilon$ to $(e_1,0)$ at time $2\varepsilon$. Next, concerning the interval $[2\varepsilon,t]$, similar to \eqref{form:xtilde_1(s):case_1:[epsilon,t]}, we obtain the following expression:
\begin{align} \label{form:xtilde_1(s):case_2b:[2epsilon,t]}
\xt_1(s) &=\begin{cases} \Big(1+\varepsilon-\psi_1\big(\frac{s}{\varepsilon}-2\big)\Big)e_1, & 2\varepsilon\le s\le 3\varepsilon,\\
\varepsilon e_1, &3\varepsilon\le s\le t-\varepsilon,\\
\psi_1\big(\frac{s-t+\varepsilon}{\varepsilon}\big)e_1,& t-\varepsilon\le s\le t,
\end{cases} \\
\vt_1(s)& =\xt_1'(s),\nt
\end{align}
where $\psi_1$ is as in \eqref{form:psi_1}. 

Now, from \eqref{form:xtilde_1(s):case_2b:[0,epsilon]}, \eqref{form:xtilde_1(s):case_2b:[epsilon,2epsilon]} and \eqref{form:xtilde_1(s):case_2b:[2epsilon,t]}, it is not difficult to verify that $(\xt_1,\vt_1)$ satisfies the estimate \eqref{cond:int_0^t|xtile_1|<r} as well as the boundary condition \eqref{cond:xtile_1:boundary}. Lastly, by choosing $\Gamma$ as in \eqref{form:Gamma}, we obtain the triple $(\xt_1,\vt_1,\Gamma)$ solving \eqref{eqn:Langevin:droplet:xtile}, thereby finishing the construction in Case 2. The proof is thus complete.
\end{proof}

\bibliographystyle{abbrv}
\bibliography{singular_bib,ARFMBib}

\end{document}